\newtheorem{thm}{Theorem}
\numberwithin{thm}{section}
\newtheorem{lem}[thm]{Lemma}
\newtheorem{cor}[thm]{Corollary}
\newtheorem{prop}[thm]{Proposition}
\newtheorem{exam}[thm]{Example}
\newtheorem*{acknowledgements}{Acknowledgements}
\newtheorem{defi}[thm]{Definition}
\newtheorem{assumption}[thm]{Assumption}
\newcommand{\new}{}
\newcommand{\newnew}{}
\newcommand{\pstar}{\tilde{g}}
\newcommand{\pconj}{p^*}
\newtheorem{rem}[thm]{Remark}
\newcommand{\N}{\mathbb{N}}
\newcommand{\Z}{\mathbb{Z}}
\newcommand{\R}{\mathbb{R}}
\newcommand{\torus}{\mathbb{T}}
\newcommand{\X}{\mathcal{X}}
\newcommand{\Y}{\mathcal{Y}}
\newcommand{\HilbertX}{\X}
\newcommand{\HilbertY}{\Y}
\newcommand{\manifold}{\mathcal{M}}
\newcommand{\norm}[1]{\left\|#1\right\|}
\newcommand{\paren}[1]{\left(#1\right)}
\newcommand{\lsp}{\left\langle}
\newcommand{\rsp}{\right\rangle}
\newcommand*\diff{\mathop{}\!\mathrm{d}}
\newcommand{\T}{T} 
\newcommand{\f}{f} 
\newcommand{\g}{g}
\newcommand{\p}{p}	
\newcommand{\fal}{\hat{f}_\alpha}
\newcommand{\fdagger}{f^\dagger}
\newcommand{\gdagger}{g^\dagger}
\newcommand{\itfal}[1]{\hat{f}_\alpha^{(#1)}}
\newcommand{\pal}{\hat{p}_\alpha}
\newcommand{\itpal}[1]{\hat{p}_\alpha^{(#1)}}
\newcommand{\pbar}{\overline{p}}
\newcommand{\wbar}{\overline{\omega}}
\newcommand{\itpbar}[1]{\overline{p}^{(#1)}}
\newcommand{\itwbar}[1]{\overline{\omega}^{(#1)}}
\newcommand{\gobs}{g^{\mathrm{obs}}}
\newcommand{\Sfun}{\mathcal{S}} 
\newcommand{\Rpen}{\mathcal{R}}
\newcommand{\Ritpen}[1]{\mathcal{R}_{#1}}
\newcommand{\q}{q} 
\newcommand{\conv}{r} 
\newcommand{\smoo}{s} 
\newcommand{\cXR}{c_{q,\Y}} 
\newcommand{\csXR}{c_{q^*\!,\Y^*}} 
\newcommand{\breg}[1]{\Delta_{#1}}
\newcommand{\symbreg}[1]{\Delta_{#1}^{\rm{sym}}}
\newcommand{\landauO}[1]{\mathcal{O}\left(#1\right)}
\renewcommand{\qed}{}
\newcommand{\keywords}[1]{\ \\ {\bf Keywords:} #1 \\ {\bf MSC:} 65J20, 65J22  }
\DeclareMathOperator{\sgn}{sgn}
\DeclareMathOperator*{\argmin}{arg\,min}
\DeclareMathOperator{\KL}{KL}
\DeclareMathOperator{\range}{ran}
\DeclareMathOperator{\VSC}{VSC}
\DeclareMathOperator{\id}{id}
\DeclareMathOperator{\dom}{dom}
\DeclareMathOperator{\supp}{supp}
\title{Higher order convergence rates for Bregman iterated variational regularization 
of inverse problems}
\author{Benjamin Sprung\thanks{Lotzestr. 16-18, 37083 G\"ottingen, Germany, \href{mailto:b.sprung@math.uni-goettingen.de}{b.sprung@math.uni-goettingen.de},
 \href{mailto:hohage@math.uni-goettingen.de}{hohage@math.uni-goettingen.de}}    \and      Thorsten Hohage\footnotemark[1] 
}
\begin{document}

\maketitle

\begin{abstract}
We study the convergence of variationally regularized solutions to linear ill-posed operator 
equations in Banach spaces as the noise in the right hand side tends to $0$. The rate of 
this convergence is determined by abstract smoothness conditions on the solution called 
source conditions. For non-quadratic data fidelity or penalty terms such source conditions are 
often formulated in the form of variational inequalities. Such variational source conditions (VSCs) 
as well as other formulations of such conditions in Banach spaces have the disadvantage 
of yielding only low-order convergence rates. A first step towards higher order 
VSCs has been taken by Grasmair (2013) who obtained convergence 
rates up to the saturation of Tikhonov regularization. For even higher order convergence rates, 
iterated versions of variational regularization have to be considered. In this paper we 
introduce VSCs of arbitrarily high order which lead to optimal convergence 
rates in Hilbert spaces. For Bregman iterated variational regularization in Banach spaces with general data fidelity 
and penalty terms, we derive convergence rates under third order VSC. 
These results are further discussed for entropy regularization with elliptic pseudodifferential 
operators where the VSCs are interpreted in terms of Besov spaces and the optimality of the rates 
can be demonstrated. Our theoretical results are confirmed in numerical experiments. 

\keywords{inverse problems, variational regularization, convergence rates, entropy regularization, 
variational source conditions}
\end{abstract}

\section{Introduction}
We consider linear, ill-posed inverse problems in the form of operator equations 
\begin{equation}\label{eq:opeq}
\T\f=\gobs
\end{equation}
with a bounded linear operator $\T:\X\to \Y$ between Banach spaces $\X$ and $\Y$. 
We will assume that $\T$ is injective, but that $\T^{-1}:\T(\X)\to \X$ is not continuous. 
The exact solution will be denoted by $\fdagger$, and the noisy observed data by $\gobs\in\Y$, 
assuming the standard deterministic noise model 
\begin{align}\label{eq:det_noise}
\|\gobs-\T\fdagger\|\leq \delta 
\end{align}
with noise level $\delta>0$. To obtain a stable estimator of $\fdagger$ from such data 
we will consider generalized Tikhonov regularization of the form 
\begin{align}\label{eq:genTikh} 
	\fal\in\argmin_{\f\in\X}\left[\frac{1}{\alpha}\Sfun(\T\f-\gobs)+\Rpen(\f)\right] \tag{{\new $P_1$}}
\end{align}
with a convex, lower semi-continuous penalty functional $\Rpen:\X\to \R\cup\{\infty\}$, 
$\R\not\equiv \infty$, a regularization parameter $\alpha>0$ and a data fidelity term
\begin{align}\label{eq:defiS}
\Sfun(\g):=\tfrac{1}{q}\|\g\|_{\Y}^q
\end{align} 
for some $q> 1$. 

The aim of regularization theory is to bound the reconstruction error 
$\|\fal-\fdagger\|$ in terms of the noise level $\delta$. Classically, in 
a Hilbert space setting, conditions implying such bounds have been formulated in terms of the 
spectral calculus of the operator $\T^*\T$,
\begin{align}\label{eq:ssc}
\fdagger\in (\T^*\T)^{\nu/2}(\X)
\end{align}
for some $\nu>0$ and some initial guess $\f_0$. In fact, 
using spectral theory it is easy to show that \eqref{eq:ssc} yields 
\begin{align}\label{classical_opt_order_rates}
	\norm{\fdagger-\fal}= \mathcal{O}\paren{\delta^\frac{\nu}{\nu+1}} 
\end{align}
for classical Tikhonov regularization (i.e.\ \eqref{eq:genTikh} with 
$\Rpen(\f)=\|\f-\f_0\|^2$ and $\Sfun(\g)=\|\g\|_{\Y}^2$) if $\nu\in (0,2]$ 
and $\alpha\sim \delta^\frac{2}{\nu+1}$ (see e.g.\ \cite{EHN:96}). 
The proof and even the formulation 
of the source condition \eqref{eq:ssc} rely on spectral theory and have no 
straightforward generalizations to Banach space settings, and even 
in a Hilbert space setting the proof does not apply to frequently used 
nonquadratic functionals $\Rpen$ and $\Sfun$. 

As an alternative, starting from \cite{HKPS:07}, source conditions in the form 
of variational inequalities have been used: 
\begin{align}\label{eq:vsc1}
\forall \f\in \X\,:\,\lsp \f^*,\fdagger-\f\rsp 
\leq \tfrac{1}{2}\breg{\Rpen}^{\f^*}(\f,\fdagger) + \Phi\paren{\Sfun(\T\f-\T\fdagger)}
\end{align}
Here  $\Phi:[0,\infty)\to [0,\infty)$ is an index function (i.e.\ $\Phi$ 
is continuous and increasing with $\Phi(0)=0$), and $\breg{\Rpen}^{\f^*}$ denotes the 
Bregman distance (see Section \ref{sec:bregman_it} for a definition). 
Under the noise model \eqref{eq:det_noise} the variational source condition \eqref{eq:vsc1} 
implies the convergence rate 
$\breg{\Rpen}^{\f^*}(\fal,\fdagger)\leq \mathcal{O}(\Phi(\delta^2))$, as shown in \cite{grasmair:10}. 
In contrast to spectral source conditions, the condition \eqref{eq:vsc1} is not only 
sufficient, but even necessary for this rate of convergence in most cases (see \cite{HW:17}). 
Moreover, due to the close connection to conditional stability estimates, variational 
source conditions can be verified even for interesting nonlinear inverse problems \cite{HW:15}.

However, it is easy to see that \eqref{eq:vsc1} with quadratic $\Rpen$ and $\Sfun$
can only hold true for $\Phi$ satisfying
 $\lim_{\tau\to 0} \Phi(\tau)/\sqrt{\tau}>0$ (except 
for the very special case $\fdagger\in\argmin \Rpen$), see \cite[Prop.~12.10]{flemming:12b}. 
This implies that for 
quadratic Tikhonov regularization the condition \eqref{eq:vsc1} only covers spectral 
H\"older source condition \eqref{eq:ssc} with indices $\nu\in (0,1]$. 
{\new Several alternatives to the formulation \eqref{eq:vsc1} of the source condition suffer from 
the same limitation: multiplicative variational source conditions \cite{Andreev2015,KH:10}, approximate 
source conditions \cite{flemming:12b}, and approximate variational source conditions \cite{flemming:12b}. 
Symmetrized version of multiplicative variational source conditions (see \cite[eq.~(6)]{Andreev2015} and
\cite[\S 4]{Albani2016}) cover a larger range of $\nu$, but have no obvious generalization to Banach space settings 
or non-quadratic $\Sfun$ or $\Rpen$.}
As shown in the first paper \cite{HKPS:07}, the limiting case 
$\Phi(\tau)= c\sqrt{\tau}$ is equivalent to the source condition
\begin{equation}\label{eq:SSChalf}
\exists \pbar\in \Y^*\colon\qquad T^*\pbar = \f^*
\end{equation}
studied earlier in \cite{BO:04,eggermont:93}. To generalize also H\"older source conditions 
\eqref{eq:ssc} with $\nu>1$ to the setting \eqref{eq:genTikh}, Grasmair \cite{Grasmair2013}
imposed a variational source condition on $\pbar$, which turns out to be the solution of 
a Fenchel dual problem. Again the limiting case of this dual source condition, which we tag
\emph{second order source condition}, is equivalent to a simpler condition, 
$T\wbar \in \partial \Sfun^*(\pbar)$, 
which was studied earlier in \cite{Neubauer2010,resmerita:05,RS:06}.  
Hence, Grasmair's second order condition corresponds to the indices $\nu\in (1,2]$ in \eqref{eq:ssc}. 

The aim of this paper is to derive rates of convergence corresponding to indices $\nu>2$, i.e.\ 
faster than $\|\fal-\fdagger\|= \mathcal{O}(\delta^{2/3})$ in a Banach space setting. 
By the well-known saturation effect for Tikhonov regularization \cite{groetsch:84} such rates 
can occur in quadratic Tikhonov regularization only for $\fdagger=0$. 
Therefore, we consider Bregman iterated Tikhonov regularization of the form
\begin{align} \label{Bregit}
	\itfal{n}\in\argmin_{\f\in\X}\left[\frac{1}{\alpha}\Sfun(\T\f-\gobs)
	+\breg{\Rpen}\!\paren{\f,\itfal{n-1}} \right],  \tag{{\new $P_n$}}
\end{align} 
{\new for $n\ge 2$,} which reduces to iterated Tikhonov regularization if $\Rpen(\f) = \|\f\|_{\X}^2$ and 
$\Sfun(\g)=\|\g\|^2$. There is a considerable literature on this type of iteration 
from which we can only give a few references here. Note that for $\Rpen(\f) = \|\f\|_{\X}^2$ 
the iteration \eqref{Bregit} can be interpreted as the proximal point method for minimizing  
$\mathcal{T}(\f):= \Sfun(\T\f-\gobs)$. In \cite{CZ:92,CT:93,eckstein:93} generalizations of the 
proximal point method for general functions $\mathcal{T}$ on $\mathbb{R}^d$ were studied, 
in which the quadratic term is replaced by some Bregman distance (also called $D$-function). 
For $\mathcal{T}(\f)= \Sfun(\T\f-\gobs)$ this leads to \eqref{Bregit}, and the references above 
discuss in particular the case entropy functions $\Rpen$ considered below. In the context of total 
variation regularization of inverse problems, the iteration \eqref{Bregit} was suggested 
 in \cite{Osher2005}. Low order convergence rates of this iterative method 
for quadratic data fidelity terms $\Sfun$ and general penalty terms $\Rpen$ were obtained 
in \cite{BRH:07,FG:12,Frick2011,FS:10}. 
We emphasize that in contrast to all the references above, we consider only small fixed number 
of iterations in \eqref{Bregit} here to cope with the saturation effect. In particular, we 
study convergence in the limit $\alpha\to 0$, rather than $n\to\infty$. 

The main contributions of this paper are:
\begin{itemize}
	\item The formulation of variational source conditions of arbitrarily high order for 
	quadratic regularization in Hilbert spaces (\ref{def:higher_order_VSC}) and the derivation 
	of optimal convergence rates under these conditions (Theorem \ref{Hilbert_theorem}). 
  \item Optimal convergence rates of general Bregman iterated variational regularization \eqref{Bregit} 
	in Banach spaces under a variational source condition of order $3$ (Theorem \ref{rate_theorem}).
  \item Characterization of our new higher order variational source conditions in terms 
	of Besov spaces for finitely smoothing operators, both for quadratic 
	regularization (Corollary \ref{cor:besov_char}) and for maximum entropy regularization 
	(Theorem \ref{thm:ME_rates}). 
\end{itemize}

The remainder of this paper is organized as follows: 
In the following section we review some basic properties of the Bregman iteration \eqref{Bregit} 
and derive a general error bound. The following two sections \S \ref{sec:Hilbert_spaces} 
and \S \ref{sec:third_order} contain our main abstract convergence results in Hilbert and 
Banach spaces, respectively. The following section \S \ref{sec:verification} is devoted 
to the interpretation of higher order variational source conditions. 
Our theoretical results are verified by numerical experiments for entropy regularization in 
\S \ref{sec:numerical}, before we end the paper with some conclusions. 
Some results on duality mappings and consequences of the Xu-Roach inequality are collected 
in an appendix. 

%

\section{Bregman iterations}\label{sec:bregman_it}
Let us first recall the definition of the Bregman distance for a convex functional 
$\Rpen:\X\to (-\infty,\infty]$: Let $\f_0,\f\in\X$ and assume 
that ${\new \f_0}^*\in \X^*$ belongs to the subdifferential of $\Rpen$ at $\f_0$, 
$\f_0^*\in\partial\Rpen(\f_0)$ (see e.g.\ \cite[\S I.5]{ET:76}). Then we set
\[
	\breg{\Rpen}^{f_0^*}(\f,\f_0):=\Rpen(\f)-\Rpen(\f_0)-\lsp f_0^*, \f-\f_0\rsp.
\]
In the context of inverse problems Bregman distances were introduced in \cite{BO:04,eggermont:93}. 
If there is no ambiguity, we sometimes omit the superindex $f_0^*$. This is in particular the case 
if $\Rpen$ is Gateaux differentiable, implying that $\partial\Rpen(\f_0) = \{\Rpen'[\f_0]\}$. 
In the case $\Rpen(\f) =\|\f\|_{\X}^2$ with a Hilbert space $\X$, the Bregman distance 
is simply given by $\breg{\Rpen}(\f,\f_0) = \|\f-\f_0\|_{\X}^2$. In general, however, 
the Bregman distance is neither symmetric nor does it satisfy a triangle inequality. 
Later we will also use symmetric Bregman distances 
$\breg{\Rpen}^{\mathrm{sym},\f_1^*,\f_2^*}(\f_1,\f_2):=\breg{\Rpen}^{\f_1^*}(\f_2,\f_1)+\breg{\Rpen}^{\f_2^*}(\f_1,\f_2)$ for $\f_1,\f_2\in\X$ and $\f_j^*\in \partial \Rpen(\f_j)$, which satisfy  
\begin{align*}
	\symbreg{\Rpen}(\f_1,\f_2)=\lsp \f_2^*-\f_1^*,\f_2-\f_1\rsp.
\end{align*} 
Under the same assumptions the following identity follows from Young's equality:
\begin{align}\label{eq:bregman_identity}
\breg{\Rpen}^{\f_2^*}(\f_1,\f_2) = \breg{\Rpen^*}^{\f_1}(\f_2^*,\f_1^*). 
\end{align}

\medskip
Let us show that Bregman iterations \eqref{Bregit} are well-defined for general data fidelity 
terms of the form \eqref{eq:defiS}. To this end we impose the following conditions: 
\begin{assumption}\label{assump_RS}
Let $\X,\Y$ be Banach spaces, and assume that $\Y$ is $q$-smooth and $\conv$-convex 
$1<q\le 2\le \conv<\infty$ (see Definition \ref{defi:smooth_convex}). 
Moreover, consider an operator $\T\in L(\X,\Y)$, a convex, proper, lower semi-continuous
functional  $\Rpen:\X \to (-\infty,\infty]$ and  $\Sfun$ given by \eqref{eq:defiS}. 
Assume that the functional 
\[
\f\mapsto \frac{1}{\alpha}\Sfun(\T\f-\gobs) + \breg{\Rpen}^{\f_0^*}(\f,\f_0)
\] 
has a unique minimizer for all $(\f_0,\f_0^*)\in \X\times \X^*$ such that 
$\f_0^*\in \partial \Rpen(\f_0)$. 
\end{assumption}

Existence and uniqueness of minimizers has been shown in many cases under different assumptions 
in the literature. As the main focus of this work are convergence rates, we just assume this 
property here. We just mention that it can be shown by a standard argument from calculus of 
variations under the additional assumptions that the sublevel sets 
$\{\f\in \X\colon \breg{\Rpen}^{\f_0^*}(\f,\f_0)\leq M\}$ are weakly {\new or weakly$^*$} sequentially compact 
for all $M\in \mathbb{R}$. 
For $\Rpen$ given by the cross entropy functional discussed in \S \ref{sec:ME} 
such weak continuity of sublevel sets in $L^1$ was shown in \cite[Lemma 2.3]{eggermont:93}. 
For total variation regularization Assumption \ref{assump_RS} has been 
shown in \cite[Prop.~3.1]{Osher2005}. 


For a number $s\in (1,\infty)$ we will denote by $s^*$ the conjugate number satisfying 
$\tfrac{1}{s}+\tfrac{1}{s^*}=1$. Recall that 
\[
\Sfun^*(\p) = \frac{1}{q^*}\norm{\p}_{\Y^*}^{q^*}
\]
and that $\Sfun(\cdot-\gobs)^*(\p) = \Sfun^*(\p)+\langle \p,\gobs\rangle$. {\new The initial step of the Bregman iteration  is} the Tikhonov minimization problem \eqref{eq:genTikh}.
The Fenchel dual to \eqref{eq:genTikh}
is
\begin{align}\label{eq:dualTikh}
\pal\in\argmin_{p\in\Y^*}\left[\frac{1}{\alpha}\Sfun^*(-\alpha p) - \langle p,\gobs\rangle
+\Rpen^*(\T^*p)\right]. \tag{$P_1^*$}
\end{align}
By Theorem 4.1 in \cite[Chap.~III]{ET:76}, $\pal\in\Y^*$ exists as the functional $\Sfun$ 
is continuous everywhere. As $\Y$ is $q$-smooth, $\Y^*$ is $q^*$-convex, and hence 
$\pal$ is unique. 
If $\fal$ exists as well we have strong duality for \eqref{eq:genTikh},\eqref{eq:dualTikh}, 
and by \cite[Chap.~III, Prop.~4.1]{ET:76} the following extremal relations hold true:
\begin{align} \label{IA}
	\T^*\pal\in\partial\Rpen(\fal) \quad\text{ and }\quad -\alpha\pal\in\partial\Sfun(\T\fal-\gobs). 
\end{align}
Using the Bregman distance $\Ritpen{2}(\f):=\breg{\Rpen}^{\T^*\pal}(\f,\fal)$ we can give a 
precise definition of the second step of the Bregman iteration \eqref{Bregit}: 
\begin{align}
		\itfal{2}\in\argmin_{\f\in\X}\left[\frac{1}{\alpha}\Sfun(\T\f-\gobs)+\Ritpen{2}(\f)\right]. \tag{$P_2$}
\end{align}
Like this we can recursively prove well-definedness of the Bregman iteration \eqref{Bregit} as follows:
\begin{prop} \label{prop}
Suppose Assumption \ref{assump_RS} holds true. 
Let $\itfal{1}:=\fal$ be the solution to $(P_1):=\eqref{eq:genTikh}$, and set $\Ritpen{1}:=\Rpen$. 
Then for $n=1,2,\dots$ the dual solutions 
	\begin{align}
\itpal{n}\in\argmin_{p\in\Y^*}\left[\frac{1}{\alpha}\Sfun^*(-\alpha p)
-\langle \p,\gobs\rangle + \Ritpen{n}^*(\T^*p)\right] \tag{$P_n^*$}
	\end{align}
are well defined, and we have strong duality between $(P_n)$ and $(P_n^*)$. Moreover,  
	\begin{align*}
 \f_n^*:=\sum_{k=1}^n \T^*\itpal{k}\in \partial\Rpen\paren{\itfal{n}},
	\end{align*}
such that we can define 
$\Ritpen{n+1}(\f):=\breg{\Rpen}^{f_n^*}(\f,\itfal{n})$ 
	as well as
	\begin{align}
		\itfal{n+1}\in\argmin_{\f\in\X}\left[\frac{1}{\alpha}\Sfun(\T\f-\gobs)+\Ritpen{n+1}(\f)\right]. \tag{$P_{n+1}$}
	\end{align}
\end{prop}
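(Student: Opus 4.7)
The plan is to prove Proposition \ref{prop} by induction on $n$. The base case $n=1$ is essentially already recorded in the paragraphs preceding the proposition: the problem $(P_1^*)$ is the Fenchel dual of $(P_1)=\eqref{eq:genTikh}$, existence of $\itpal{1}$ follows from \cite[Chap.~III, Thm.~4.1]{ET:76} because $\Sfun$ is finite and continuous everywhere, uniqueness follows from the $q^*$-convexity of $\Y^*$, strong duality is \cite[Chap.~III, Prop.~4.1]{ET:76}, and the first extremal relation in \eqref{IA} gives $f_1^*:=\T^*\itpal{1}\in\partial\Rpen(\itfal{1})$.

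For the inductive step, I assume $\itfal{n}$ and $\itpal{1},\ldots,\itpal{n}$ have been constructed with $f_n^*:=\sum_{k=1}^n \T^*\itpal{k}\in\partial\Rpen(\itfal{n})$. Then $\Ritpen{n+1}(\f):=\breg{\Rpen}^{f_n^*}(\f,\itfal{n})$ is a well-defined convex, proper, lower semi-continuous functional, and existence and uniqueness of the primal minimizer $\itfal{n+1}$ of $(P_{n+1})$ follow directly from Assumption \ref{assump_RS} applied with $(\f_0,\f_0^*)=(\itfal{n},f_n^*)$.

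The central step is to identify $(P_{n+1}^*)$ as the Fenchel dual of $(P_{n+1})$. Since $\Ritpen{n+1}(\f)=\Rpen(\f)-\lsp f_n^*,\f\rsp + c_n$ with the constant $c_n=\lsp f_n^*,\itfal{n}\rsp-\Rpen(\itfal{n})$, a short calculation yields $\Ritpen{n+1}^*(\xi)=\Rpen^*(\xi+f_n^*)-c_n$. Substituting this into the abstract Fenchel dual of $(P_{n+1})$ and dropping additive constants produces exactly $(P_{n+1}^*)$. Existence of $\itpal{n+1}$ then follows again from \cite[Chap.~III, Thm.~4.1]{ET:76}, uniqueness from the $q^*$-convexity of $\Y^*$, and strong duality from \cite[Chap.~III, Prop.~4.1]{ET:76}.

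Finally, the extremal relations from Fenchel duality give $\T^*\itpal{n+1}\in\partial\Ritpen{n+1}(\itfal{n+1})$. Since $\Ritpen{n+1}$ differs from $\Rpen$ only by an affine term, $\partial\Ritpen{n+1}(\f)=\partial\Rpen(\f)-f_n^*$, so that $f_{n+1}^*=f_n^*+\T^*\itpal{n+1}=\sum_{k=1}^{n+1}\T^*\itpal{k}\in\partial\Rpen(\itfal{n+1})$, closing the induction. I expect the only real obstacle to be the bookkeeping for the Fenchel conjugate of the Bregman distance, namely recognising that the accumulated dual variable $f_n^*$ enters $(P_{n+1}^*)$ precisely as a shift of the argument of $\Rpen^*$; everything else reduces to a direct application of standard convex duality from \cite[Chap.~III]{ET:76}.
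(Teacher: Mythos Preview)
Your proof is correct and follows essentially the same approach as the paper: induction on $n$, existence of the dual solution and strong duality via \cite[Chap.~III, Thm.~4.1]{ET:76}, and the key subdifferential identity $\partial\Ritpen{n+1}(\f)=\partial\Rpen(\f)-f_n^*$ to push the extremal relation $\T^*\itpal{n+1}\in\partial\Ritpen{n+1}(\itfal{n+1})$ back to $\partial\Rpen$. The paper's version is more terse and does not spell out the formula $\Ritpen{n+1}^*(\xi)=\Rpen^*(\xi+f_n^*)-c_n$ (since $(P_{n+1}^*)$ is already written in terms of $\Ritpen{n+1}^*$, this computation is not strictly needed to identify the dual), but your extra bookkeeping is harmless and makes the argument more self-contained.
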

\begin{proof}
	We need to prove the existence of $\itpal{n}$ as well as the fact that  the subdifferential $\partial\Rpen(\itfal{n})$ contains $\sum_{k=1}^n \T^*\itpal{k}$ .
	The existence of $\itpal{n}$ as well as strong duality for $(P_n),(P_n^*)$ follows once again from Theorem 4.1 in \cite{ET:76}.
	The second statement can be proved by induction. The base case was shown in \eqref{IA}.  By strong duality we have
	\begin{align*}
		\T^*\itpal{n}\in\partial\Ritpen{n}(\itfal{n})=\partial\Rpen(\itfal{n})-\sum_{k=1}^{n-1}\T^*\itpal{k}.
	\end{align*}
Therefore, we have $\sum_{k=1}^n \T^*\itpal{k}\in\partial\Rpen(\itfal{n})$ and can define 
$\Ritpen{n+1}$ in the way we claimed.
\qed\end{proof}

A useful fact about the penalty functionals $\Ritpen{n}$ is that their corresponding Bregman 
distances coincide for all $n\in\N$ {\new as they only differ by an affine linear functional: 
\begin{lem} \label{equivalence_of_bregman_dist}
	Let $f_0\in\X,f_0^*\in\partial\Rpen(f_0)$ and $ \tilde{p}:=\sum_{k=1}^{n-1} \itpal{k}$. Then we have
	\begin{align*}
		\breg{\Ritpen{n}}^{\f_0^*-\T^*\tilde{\p}}(\f,f_0)=\breg{\Rpen}^{f_0^*}(\f,\f_0).
	\end{align*}
\end{lem}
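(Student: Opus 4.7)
The plan is to unfold the definition of $\Ritpen{n}$ supplied by Proposition~\ref{prop} and observe that $\Ritpen{n}$ differs from $\Rpen$ only by an affine linear functional in $\f$; Bregman distances are invariant under such modifications once the subgradient is shifted correspondingly, which is precisely the offset $-\T^*\tilde{\p}$ appearing on the left-hand side.

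More concretely, by Proposition~\ref{prop} we have
\begin{align*}
\Ritpen{n}(\f) = \breg{\Rpen}^{\f_{n-1}^*}\!\paren{\f,\itfal{n-1}}
= \Rpen(\f) - \Rpen\!\paren{\itfal{n-1}} - \lsp \T^*\tilde{\p},\f-\itfal{n-1}\rsp,
\end{align*}
since $\f_{n-1}^* = \sum_{k=1}^{n-1} \T^*\itpal{k} = \T^*\tilde{\p}$. The first step of the proof is to read off from this identity that $\Ritpen{n} - \Rpen$ is affine in $\f$ with (negative) gradient $-\T^*\tilde{\p}$, so that $\partial\Ritpen{n}(\f_0) = \partial\Rpen(\f_0) - \T^*\tilde{\p}$; in particular, $\f_0^*\in\partial\Rpen(\f_0)$ implies $\f_0^*-\T^*\tilde{\p}\in\partial\Ritpen{n}(\f_0)$, so the left-hand side of the claim is well-defined.

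The second step is a direct computation from the definition of the Bregman distance: substituting the above expression for $\Ritpen{n}$ into
\begin{align*}
\breg{\Ritpen{n}}^{\f_0^*-\T^*\tilde{\p}}(\f,\f_0)
= \Ritpen{n}(\f) - \Ritpen{n}(\f_0) - \lsp \f_0^*-\T^*\tilde{\p},\f-\f_0\rsp
\end{align*}
the $\Rpen(\itfal{n-1})$ terms cancel, and the remaining $\T^*\tilde{\p}$ pairings combine to $-\lsp \T^*\tilde{\p},\f-\f_0\rsp + \lsp \T^*\tilde{\p},\f-\f_0\rsp = 0$, leaving exactly $\Rpen(\f) - \Rpen(\f_0) - \lsp \f_0^*,\f-\f_0\rsp = \breg{\Rpen}^{\f_0^*}(\f,\f_0)$.

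There is no real obstacle here; the only thing to be careful about is sign bookkeeping and keeping the indices on $\itfal{n-1}$ and $\tilde{\p}=\sum_{k=1}^{n-1}\itpal{k}$ consistent with the convention in Proposition~\ref{prop}. The lemma can therefore be regarded as the formal statement that the Bregman iteration \eqref{Bregit} does not change the geometry measured by $\breg{\Rpen}$, which will be convenient in the subsequent convergence rate analysis.
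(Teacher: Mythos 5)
Your proof is correct and takes essentially the same route as the paper: both unfold $\Ritpen{n}(\f)=\breg{\Rpen}^{\T^*\tilde{\p}}(\f,\itfal{n-1})$ via Proposition~\ref{prop}, note that $\f_0^*-\T^*\tilde{\p}\in\partial\Ritpen{n}(\f_0)$, and verify the cancellation by direct substitution into the definition of the Bregman distance. Your explicit remark that $\Ritpen{n}-\Rpen$ is affine with slope $-\T^*\tilde{\p}$ is just a slightly more structural phrasing of the same computation.
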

\begin{proof}
	By Proposition \ref{prop} we have $\T^*\tilde{\p}\in\partial\Rpen(\itfal{n-1})$ so $\f_0^*-\T^*\tilde{\p}\in\partial\Ritpen{n}(f_0)$ and
\begin{align*} 
	\breg{\Ritpen{n}}^{\f_0^*-\T^*\tilde{\p}}(\f,f_0) \notag
&=\breg{\Rpen}^{\T^*\tilde{\p}}(\f,\itfal{n-1})-\breg{\Rpen}^{\T^*\tilde{\p}}(\f_0,\itfal{n-1})
-\lsp \f_0^*-\T^*\tilde{\p},\f-\f_0\rsp \\
	&=\Rpen(\f)-\Rpen(\f_0)-\lsp \f_0^*,\f-\f_0\rsp=\breg{\Rpen}^{f_0^*}(\f,\f_0).
\end{align*}
\qed\end{proof}}

The following lemma describes the first step towards our bounds on the error in the 
Bregman distance. All that is then left is to construct appropriate vectors $\f$ which 
approximately minimize the functional on the right hand side. 
\begin{lem} \label{Lemma11.19}
Suppose Assumption \ref{assump_RS} holds true and there exists 	
$\pbar\in\Y^*$ such that $\T^*\pbar\in\partial\Rpen(\fdagger)$. 
With the notation of Proposition \ref{prop} 
define $s^{(n)}_{\alpha}:= \pbar-\sum_{k=1}^{n-1}\itpal{k}$. Then
	\begin{align*}
\breg{\Rpen}^{\T^*\pbar}\paren{\itfal{n},\fdagger}
\le \inf_{f\in \X}\bigg[\frac{1}{\alpha}\Sfun\paren{\T\f-\gobs}+\lsp s^{(n)}_{\alpha},\T\f-\gobs  \rsp& \\
+\frac{1}{\alpha}\Sfun^*\left(-\alpha s^{(n)}_{\alpha}\right) 
+\breg{\Rpen}^{\T^*\pbar}\paren{\f,\fdagger} & \bigg].
	\end{align*}
\end{lem}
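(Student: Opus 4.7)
The plan is to exploit the fact that, modulo an affine functional, iterating the Bregman penalty does not change the underlying Bregman distance, and then combine the optimality of $\itfal{n}$ for $(P_n)$ with a Fenchel--Young estimate for $\Sfun$. Concretely, writing $\tilde p := \sum_{k=1}^{n-1}\itpal{k}$, so that $s_\alpha^{(n)} = \pbar-\tilde p$, Lemma \ref{equivalence_of_bregman_dist} gives $\T^* s_\alpha^{(n)}\in\partial\Ritpen{n}(\fdagger)$ and
\[
\breg{\Ritpen{n}}^{\T^* s_\alpha^{(n)}}(\f,\fdagger)=\breg{\Rpen}^{\T^*\pbar}(\f,\fdagger)\qquad\text{for all }\f\in\X.
\]
Thus it suffices to bound the left-hand side with $\f=\itfal{n}$.

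Next I would use the definition of $\itfal{n}$ as a minimizer of $\f\mapsto\frac{1}{\alpha}\Sfun(\T\f-\gobs)+\Ritpen{n}(\f)$ and, on both sides of the resulting inequality, substitute
\[
\Ritpen{n}(\f)=\Ritpen{n}(\fdagger)+\lsp \T^* s_\alpha^{(n)},\f-\fdagger\rsp+\breg{\Ritpen{n}}^{\T^* s_\alpha^{(n)}}(\f,\fdagger),
\]
which is just the definition of the Bregman distance at the base point $\fdagger$. The common term $\Ritpen{n}(\fdagger)+\lsp \T^* s_\alpha^{(n)},-\fdagger\rsp$ cancels, and after rearranging one obtains, for every $\f\in\X$,
\begin{align*}
\breg{\Ritpen{n}}^{\T^* s_\alpha^{(n)}}(\itfal{n},\fdagger) \le{}& \tfrac{1}{\alpha}\Sfun(\T\f-\gobs)+\lsp s_\alpha^{(n)},\T\f-\T\itfal{n}\rsp\\
&{}+\breg{\Ritpen{n}}^{\T^* s_\alpha^{(n)}}(\f,\fdagger)-\tfrac{1}{\alpha}\Sfun(\T\itfal{n}-\gobs).
\end{align*}

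Inserting $\pm\gobs$ splits $\lsp s_\alpha^{(n)},\T\f-\T\itfal{n}\rsp$ into $\lsp s_\alpha^{(n)},\T\f-\gobs\rsp-\lsp s_\alpha^{(n)},\T\itfal{n}-\gobs\rsp$, so the bracket containing $\itfal{n}$ becomes
\[
-\tfrac{1}{\alpha}\Sfun(\T\itfal{n}-\gobs)-\lsp s_\alpha^{(n)},\T\itfal{n}-\gobs\rsp,
\]
which by the Fenchel--Young inequality applied to $\Sfun$ with the dual vector $-\alpha s_\alpha^{(n)}$ is bounded above by $\frac{1}{\alpha}\Sfun^*(-\alpha s_\alpha^{(n)})$. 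Finally, I would re-express the two remaining Bregman terms through the identity from the first step to return to $\breg{\Rpen}^{\T^*\pbar}$, and take the infimum over $\f\in\X$.

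The only mildly delicate point is making sure that $\T^* s_\alpha^{(n)}$ genuinely lies in $\partial\Ritpen{n}(\fdagger)$ so that the Bregman identity is applicable; this is exactly what Lemma \ref{equivalence_of_bregman_dist} delivers via Proposition \ref{prop}. Everything else is an algebraic rearrangement plus one Fenchel--Young estimate, so there is no serious obstacle beyond bookkeeping.
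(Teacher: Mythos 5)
Your proposal is correct and follows essentially the same route as the paper: the minimizing property of $\itfal{n}$, the fact that $\T^*s^{(n)}_{\alpha}\in\partial\Ritpen{n}(\fdagger)$, a Young-type estimate producing $\frac{1}{\alpha}\Sfun^*(-\alpha s^{(n)}_{\alpha})$, and Lemma \ref{equivalence_of_bregman_dist} to return to $\breg{\Rpen}^{\T^*\pbar}$. The only (harmless) difference is that you apply the Fenchel--Young \emph{inequality} directly at the pair $\bigl(-\alpha s^{(n)}_{\alpha},\,\T\itfal{n}-\gobs\bigr)$, whereas the paper uses the Young \emph{equality} at $-\alpha\itpal{n}\in\partial\Sfun(\T\itfal{n}-\gobs)$ and then drops a non-negative Bregman term of $\Sfun^*$ --- two equivalent ways of making the same estimate.
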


\begin{proof}
Due to the minimizing property of $\itfal{n}$ we have 
\begin{align*}
\frac{1}{\alpha}\Sfun\paren{\T\itfal{n}-\gobs}+\Ritpen{n}\paren{\itfal{n}}
\le\frac{1}{\alpha}\Sfun\paren{\T\f-\gobs}+\Ritpen{n}(\f),
\end{align*}
for all $\f\in\X$, which is equivalent to
	\begin{align} \label{eq:minimizing_property}
\Ritpen{n}\paren{\itfal{n}}-\Ritpen{n}(\f)\le\frac{1}{\alpha}\Sfun\paren{\T\f-\gobs}
-\frac{1}{\alpha}\Sfun\paren{\T\itfal{n}-\gobs}.
	\end{align}
As $\T^*s^{(n)}_{\alpha}= \T^*\pbar-\f^*_{n-1} 
\in \partial\Rpen(\fdagger)-\f^*_{n-1}=\partial\Ritpen{n}(\fdagger)$  
by Proposition \ref{prop}, it follows that 
	\begin{align*}
\breg{\Ritpen{n}}^{\T^*s^{(n)}_{\alpha}}\paren{\itfal{n},\fdagger}
&=\Ritpen{n}\paren{\itfal{n}}-\Ritpen{n}\paren{\fdagger}-\lsp \T^*s^{(n)}_{\alpha},\itfal{n}-\fdagger\rsp \\
&\le \frac{1}{\alpha}\Sfun(\T\f-\gobs)-\frac{1}{\alpha}\Sfun\paren{\T\itfal{n}-\gobs}  \\
&\quad -\lsp \T^*s^{(n)}_{\alpha},\itfal{n}-\fdagger\rsp+\Ritpen{n}(\f)-\Ritpen{n}\paren{\fdagger}.
	\end{align*}
	Due to the strong duality (Propostion \ref{prop}) the extremal relation 
$-\alpha\itpal{n}\in\partial\Sfun(\T\itfal{n}-\gobs)$ holds true, and thus the generalized Young 
equality yields
\begin{align*}
-\frac{1}{\alpha}\Sfun\paren{\T\itfal{n}-\gobs}
&=\frac{1}{\alpha}\Sfun^*\paren{-\alpha\itpal{n}}+\lsp \itpal{n},\T\itfal{n}-\gobs\rsp \\
&=\frac{1}{\alpha}\Sfun^*\left(-\alpha s^{(n)}_{\alpha}\right)+\lsp s^{(n)}_{\alpha},\T\itfal{n}-\gobs\rsp \\
& \quad -\frac{1}{\alpha}\breg{\Sfun^*}\left(-\alpha s^{(n)}_{\alpha},-\alpha\itpal{n}\right)\\
&\le\frac{1}{\alpha}\Sfun^*\left(-\alpha s^{(n)}_{\alpha}\right)+\lsp s^{(n)}_{\alpha},\T\itfal{n}-\gobs\rsp
\end{align*}
where we have used that  the Bregman distance is non-negative. 
Combining this gives
\begin{align*}
\breg{\Ritpen{n}}^{\T^*s^{(n)}_{\alpha}}\paren{\itfal{n},\fdagger}
&\le\frac{1}{\alpha}\Sfun(\T\f-\gobs)+\frac{1}{\alpha}\Sfun^*\left(-\alpha s^{(n)}_{\alpha}\right) 
+\lsp s^{(n)}_{\alpha},\T\fdagger-\gobs\rsp \\
& \quad +\Ritpen{n}(\f)-\Ritpen{n}\paren{\fdagger} \\
&= \frac{1}{\alpha}\Sfun(\T\f-\gobs)+\lsp s^{(n)}_{\alpha},\T\f-\gobs\rsp  \\
& \quad +\frac{1}{\alpha}\Sfun^*\left(-\alpha s^{(n)}_{\alpha}\right) 
+\breg{\Ritpen{n}}^{\T^*s^{(n)}_{\alpha}}\paren{\f,\fdagger}.
\end{align*}
Now the identity 
$\breg{\Ritpen{n}}^{\T^*s^{(n)}_{\alpha}}(\f,\fdagger)=\breg{\Rpen}^{\T^*\pbar}(\f,\fdagger)$  shown in Lemma \ref{equivalence_of_bregman_dist} completes the proof.
\qed\end{proof}

\section{Higher order variational source conditions in Hilbert spaces} \label{sec:Hilbert_spaces}

We will now go back to the Hilbert space setting, where $\X,\Y$ are Hilbert spaces and $\Rpen(\f):=\tfrac{1}{2}\|\f\|_{\X}^2$, $\Sfun(g)=\frac{1}{2}\norm{g}_\Y^2$, to prove \eqref{classical_opt_order_rates} using variational source conditions, which are defined as follows: 
\begin{defi}[Variational source condition $\VSC^l(\fdagger,\Phi)$] \label{def:higher_order_VSC}
Let $\fdagger\in\HilbertX$, let $\Phi$ be a concave index function, and let $n\in\N$. 
Then the statement  
	\begin{align}\label{VSC^2n-1} 
	\begin{split}
&\exists\itwbar{n-1}\in\HilbertX\,: \fdagger=(\T^*\T)^{n-1}\itwbar{n-1}		 \\
\land \quad &\forall  \f\in\HilbertX\,:\,
\lsp  \itwbar{n-1},\f \rsp\le\frac{1}{2}\norm{\f}^2+\Phi\left(\norm{\T\f}^2\right)
\end{split}
	\end{align}
will be abbreviated by $\VSC^{2n-1}(\fdagger,\Phi)$, and the statement 
	\begin{align}\label{VSC^2n}
	\begin{split}
	&\exists \itpbar{n}\in\HilbertY\, : \, \fdagger=(\T^*\T)^{n-1}\T^*\itpbar{n} \\
	\land\quad &\forall \p\in\HilbertY\,:\,
		\lsp \p,\itpbar{n} \rsp\le\frac{1}{2}\norm{\p}^2+\Phi\left(\norm{\T^*\p}^2\right)
		\end{split}
	\end{align}
will be abbreviated by  $\VSC^{2n}(\fdagger,\Phi)$. $\VSC^l(\fdagger,\Phi)$ for 
$l\in\N$ will be referred to as \emph{variational source condition of order $l$ 
with index function $\Phi$} for (the true solution) $\fdagger$. 
\end{defi}

Note that that $\VSC^1(\fdagger,\Phi)$ is  the classical variational source condition,  and $\VSC^2(\fdagger,\Phi)$ coincides with the source condition from 
\cite{Grasmair2013} up to the term $\frac{1}{2}\norm{\p}^2$, which implies that 
$\VSC^2(\fdagger,\Phi)$ is formally weaker than the condition in \cite{Grasmair2013}. 
It is well known that the spectral H\"older source condition \eqref{eq:ssc} with 
$\nu\in (0,1]$ implies 
$\VSC^1(\fdagger,A\id^{\nu/(\nu+1)})$ for some $A>0$ and $\id(t):=t$ (see \cite{HY:10}).
Therefore, it is easy to see that for any $l\in\N$ and $\nu\in[0,1]$ the implication  
\begin{equation}\label{eq:ssc_imply_vsc}
\fdagger\in \mathrm{ran}\paren{(\T^*\T)^{\frac{l-1+\nu}{2}}}
\quad \Rightarrow\quad 
\exists A>0\,:\, \VSC^l\paren{\fdagger, A\id^{\frac{\nu}{\nu+1}}}
\end{equation}
holds true. The converse implication is false for $\nu\in (0,1)$ as discussed 
in \S \ref{sec:verificationHilbert}. 
For $\nu=1$ we have by \cite[Prop.~3.35]{Scherzer2008} that 
\begin{equation}\label{eq:ssc_equiv_vsc}
\fdagger\in \mathrm{ran}\paren{(\T^*\T)^{\frac{l}{2}}}
\quad \Leftrightarrow\quad 
\exists A>0\,:\, \VSC^l\paren{\fdagger, A\sqrt{\cdot}}.
\end{equation}
The aim of this section is to prove error bounds for iterated Tikhonov regularization 
based on these source conditions: 

\begin{thm} \label{Hilbert_theorem}
Let $l,m\in\N$ with $m\geq l/2$, let $\Phi$ be an index function, and 
let $\psi(s):=\sup_{t\geq 0}[st+\Phi(s)]$ denote the Fenchel conjugate of 
$-\Phi$. If $\VSC^l(\fdagger,\Phi)$ holds true, there exists a constant $C>0$ 
depending only on $l$ and $m$ such that  
	\begin{align}\label{l_estimate}
		\norm{\itfal{m}-\fdagger}^2\leq C\paren{\frac{\delta^2}{\alpha}+\alpha^{l-1}\psi\left(-\frac{1}{\alpha}\right)}\qquad \mbox{for all }\alpha,\delta> 0.
	\end{align}
\end{thm}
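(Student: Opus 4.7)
The plan is to prove the theorem by direct spectral calculus, rather than via Lemma~\ref{Lemma11.19} (which in the Hilbert setting only covers $\fdagger\in\range(\T^*)$, i.e.\ $l\le 2$). In the setting $\Rpen=\tfrac12\|\cdot\|_{\X}^2$, Proposition~\ref{prop} and Lemma~\ref{equivalence_of_bregman_dist} reduce the Bregman iteration to the classical iterated--Tikhonov recursion $\itfal{n}=(A+\alpha\id)^{-1}(\T^*\gobs+\alpha\,\itfal{n-1})$ with $A:=\T^*\T$ and $\itfal{0}=0$. A routine induction on $n$ then produces the closed-form spectral error decomposition
\begin{equation*}
\itfal{m}-\fdagger \;=\; -\,r_m(A)\,\fdagger \;+\; g_m(A)\,\T^*\eta,
\qquad r_m(\lambda):=\Bigl(\tfrac{\alpha}{\lambda+\alpha}\Bigr)^{\!m},\quad g_m(\lambda):=\tfrac{1-r_m(\lambda)}{\lambda},
\end{equation*}
with $\eta:=\gobs-\T\fdagger$, $\|\eta\|\le\delta$. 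The triangle inequality gives $\|\itfal{m}-\fdagger\|^2\le 2E+2N$ for $E:=\|r_m(A)\fdagger\|^2$ and $N:=\|g_m(A)\T^*\eta\|^2$, and the classical spectral estimate $\sup_{\mu\ge 0}\mu\,g_m(\mu)^2\le m/\alpha$ (obtained via the substitution $u=\lambda/(\lambda+\alpha)$) yields $N\le m\,\delta^2/\alpha$. The whole theorem thus reduces to an $\alpha^{l-1}\psi(-1/\alpha)$-bound on~$E$.

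For $E$ I would treat both parities of $l$ in parallel. Inserting the representation of $\fdagger$ afforded by $\VSC^l$, namely $\fdagger=A^{n-1}\itwbar{n-1}$ for $l=2n-1$ and $\fdagger=A^{n-1}\T^*\itpbar{n}$ for $l=2n$, and using the intertwining $\T\,h(A)=h(\T\T^*)\,\T$, one rewrites $E=\lsp w,\,Qw\rsp$ with $w=\itwbar{n-1}$ (odd case) or $w=\itpbar{n}$ (even case), and $Q\ge 0$ a spectral operator built from $A$ (odd) or from $\T\T^*$ (even). I then apply the variational inequality of $\VSC^l$ to the test element $h_c:=c\,Qw$, with a scalar $c>0$ to be tuned. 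The elementary pointwise bound $\lambda^k r_j(\lambda)\le\alpha^k$ for $0\le k\le j$ (again via $u=\lambda/(\lambda+\alpha)$), combined with the standing hypothesis $m\ge l/2$, yields exactly the operator inequalities needed for
\begin{equation*}
\|h_c\|^2 \,\le\, c^2\alpha^{l-1}\,E, \qquad \|\T h_c\|^2 \text{ resp.\ } \|\T^* h_c\|^2 \,\le\, c^2\alpha^{l}\,E.
\end{equation*}
Plugging these into $\VSC^l$ and dividing by $c$ reduces the whole argument to the scalar inequality $E \le \tfrac{c\alpha^{l-1}}{2}\,E + \tfrac{1}{c}\,\Phi\!\left(c^2\alpha^{l}E\right)$.

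The closing step is purely algebraic: apply Fenchel--Young in the form $\Phi(t)\le \psi(-1/\alpha)+t/\alpha$ to the $\Phi$-term, yielding $E \le c\alpha^{l-1}\bigl(\tfrac12+1\bigr)E + \psi(-1/\alpha)/c$, and choose $c:=1/(3\alpha^{l-1})$ so that the coefficient of $E$ on the right becomes $\tfrac12$; absorption then gives $E\le 6\,\alpha^{l-1}\psi(-1/\alpha)$, and combining with the noise bound proves the theorem with $C:=\max(12,\,2m)$. The subtle point — and the one I expect to be the main obstacle if one does not hit the correct scalings — is precisely the joint choice of $c\sim\alpha^{-(l-1)}$ in the test element together with the Fenchel parameter $\tau=1/\alpha$: the more natural choice of matching $\tau$ to the argument of $\Phi$ (i.e.\ $\tau\sim 1/(c^2\alpha^l)$) only delivers $\psi(-1/\alpha^l)$, and this differs from $\alpha^{l-1}\psi(-1/\alpha)$ by more than a constant for general concave index functions $\Phi$ (the two agree up to constants only in the Hölder case $\Phi(t)\propto\sqrt{t}$, which corresponds precisely to $l=2$ in the classical Grasmair result), so any such choice would destroy the higher-order convergence rate.
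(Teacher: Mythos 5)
Your proposal is correct, but it takes a genuinely different route from the paper's. You work entirely within spectral calculus: the filter representation $\itfal{m}-\fdagger=-r_m(A)\fdagger+g_m(A)\T^*\eta$ with $A=\T^*\T$, the standard variance bound $\sup_{\mu\ge 0}\mu\,g_m(\mu)^2\le m/\alpha$, and then a \emph{single} application of $\VSC^l$ to the test element $c\,Qw$, where $Q$ is the nonnegative spectral operator with $E=\lsp w,Qw\rsp$. The load-bearing scalings check out: the pointwise bound $\lambda^s r_m(\lambda)\le\alpha^s$ for real $0\le s\le m$, combined with $m\ge l/2$, indeed gives $\norm{Qw}^2\le\alpha^{l-1}E$ and $\norm{\T Qw}^2$ (resp.\ $\norm{\T^*Qw}^2$) $\le\alpha^{l}E$ in both parities of $l$, and the joint choice $c=1/(3\alpha^{l-1})$ with Fenchel parameter $-1/\alpha$ closes the absorption and delivers exactly $\alpha^{l-1}\psi(-1/\alpha)$ (finiteness of $E\le\norm{\fdagger}^2$ makes the absorption legitimate). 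The paper argues quite differently and purely variationally: it starts from the duality-based decomposition of Lemma \ref{Lemma11.19}, exploits a telescoping cancellation among the iterates $\itfal{1},\dots,\itfal{n-1}$ via correction vectors $\sigma_k$ built from Pascal's-triangle coefficients, and runs a double induction in the iteration index, applying the VSC at each stage. Your argument is shorter and, in the Hilbert setting, more transparent; what it gives up is exactly what the authors emphasize at the end of \S\ref{sec:Hilbert_spaces}: their proof uses ``only variational, but no spectral arguments'' and hence serves as the template for the Banach-space Theorem \ref{rate_theorem}, where no spectral calculus (and no closed-form filter for the Bregman iterates) exists. One peripheral inaccuracy: your parenthetical that Lemma \ref{Lemma11.19} ``only covers $l\le 2$'' is off --- the paper invokes it for all $l$, since its hypothesis $\fdagger\in\range(\T^*)$ is implied by $\VSC^l$ for every $l\ge 2$ --- but as you bypass that lemma entirely, this does not affect the validity of your proof.
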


\begin{proof} We choose $n\in\N$ such that $l=2n$ or $l=2n-1$. Then $m\geq n$. 
In the following $C$ will denote a generic constant depending only on $m$ and $l$. 
The proof proceeds in four steps.\\
\emph{Step 1: Reduction to the case $m=n$.} 
	By Proposition \ref{prop} and the definition of the Bregman distance we have for all $k\ge 2$ and $\f\in\X$ that
	\begin{align*}
		\breg{\Ritpen{k}}(\itfal{k},\f)=\Ritpen{k}(\itfal{k})-\Ritpen{k}(\f)
		-\lsp \f-\sum_{j=1}^{k-1}\T^*\itpal{j}, \itfal{k}-\f \rsp.
	\end{align*}	
	By the optimality condition $\sum_{j=1}^{k-1}\T^*\itpal{j}=\itfal{k-1}$ and the minimizing property of $\itfal{k}$ \eqref{eq:minimizing_property} we have 
	\begin{align} 
	\begin{split}\label{eq:recursive_bound}
		\frac{1}{2}\norm{\itfal{k}-\f}^2=\breg{\Ritpen{k}}(\itfal{k},\f)\le&\frac{1}{2\alpha}\left(\norm{\T\f-\gobs}^2-\norm{\T\itfal{k}-\gobs}^2\right) \\
		&-\lsp \f-\itfal{k-1}, \itfal{k}-\f \rsp. 
	\end{split}
	\end{align}
	Choosing $\f=\fdagger$ gives 
	\begin{align*}
		\frac{1}{2}\norm{\itfal{k}-\fdagger}^2\le \frac{1}{2\alpha}\norm{\gdagger-\gobs}^2 +\norm{\itfal{k-1}-\fdagger}\norm{\itfal{k}-\fdagger}.
	\end{align*}
	Multiplying by four, subtracting $\norm{\itfal{k}-\fdagger}^2$ on both sides and completing the square we get
	\begin{align*}
		\norm{\itfal{k}-\fdagger}^2\le \frac{2\delta^2}{\alpha}+4\norm{\itfal{k-1}-\fdagger}^2.
	\end{align*}
	So it is enough to prove \eqref{l_estimate} for $m=n$ as this will then also imply the claimed error bound for all $m\ge n$ by the above inequality.
	
\medskip	
\emph{Step 2: Error decomposition based on Lemma \ref{Lemma11.19}.} 
Both Assumptions \eqref{VSC^2n-1} and \eqref{VSC^2n} imply that there exist 
$\itpbar{1},\dots,\itpbar{n-1}\in\HilbertY,\itwbar{1},\dots,\itwbar{n-1}\in \HilbertX$ such that 
$\fdagger=(\T^*\T)^{j-1}\T^*\itpbar{j},\fdagger=(\T^*\T)^{j}\itwbar{j}$ for $j=1,\dots,n-1$. 
In the following we will write  $\itpbar{1}=\pbar$ and $\itwbar{1}=\wbar$. 
We have $\partial\Rpen(\fdagger)=\{\fdagger\}=\{\T^*\pbar\}$, so Lemma \ref{Lemma11.19} yields 
	\begin{align}
		\norm{\itfal{n}-\fdagger}^2 &\le \frac{1}{\alpha}\left(\norm{\T\f-\gobs}^2+2\alpha\lsp s^{(n)}_{\alpha},\T\f-\gobs\rsp 
		 +\norm{-\alpha s^{(n)}_{\alpha}}^2\right) \notag \\
		&\quad+\norm{\f-\fdagger}^2  \label{eq:polarization_id}\\
		&= \frac{1}{\alpha}\norm{\T\f-\gobs+\alpha\left(\pbar-\sum_{k=1}^{n-1}\itpal{k}\right)}^2+\norm{\f-\fdagger}^2 \notag
	\end{align}
for $s^{(n)}_{\alpha}=\pbar-\sum_{k=1}^{n-1}\itpal{k}$ and all $\f\in\X$. 

We will choose $\f=n\fdagger-\alpha\wbar-\sum_{k=1}^{n-1}\itfal{k}$. 
As $\T\wbar=\pbar$ and $\T\itfal{k}-\gobs=-\alpha\itpal{k}$ by strong duality, we have
	\begin{align} \label{bound_in_terms_of_itfalk}
		\norm{\itfal{n}-\fdagger}^2 &\le  \frac{1}{\alpha}\norm{n\paren{\gdagger-\gobs}}^2+\norm{(n-1)\fdagger-\alpha\wbar-\sum_{k=1}^{n-1}\itfal{k}}^2.
	\end{align}
	It remains to bound the second term, which does not look favourable at first sight as we 
know that $\norm{\itfal{k}-\fdagger}$ should converge to zero slower than 
$\norm{\itfal{n}-\fdagger}$ for $k<n$. But it turns out that we have cancellation between the different $\itfal{k}$. Therefore, we will now 
introduce vectors $\sigma_k\in\HilbertX$ such that 
		\begin{align}\label{cancellation_bound}
		\norm{(n-1)\fdagger-\alpha\wbar-\sum_{k=1}^{n-1}\itfal{k}}
		\le \sum_{k=1}^{n-1}\norm{\itfal{k}-\fdagger-\sigma_k}
	\end{align}
	and then prove that all terms on the right hand side are of optimal order. 

Let $(b_{kj})\in\N^{\N\times\N}$ denote the matrix given by Pascal's triangle, i.e. $b_{k,j}=\binom{k+j-2}{j-1}$ for all $k,j\in\N$. We will 
need the identities
	\begin{align} \label{binomial_theorem}
		\sum_{k+j=n}(-1)^jb_{k,j}=-\delta_{n-2,0}\qquad \mbox{for all }n\geq 2,
	\end{align}
	which are equivalent to $(1-1)^{n-2}=\delta_{n-2,0}$ by the binomial theorem 
$(a+b)^n=\sum_{k+j=n+2}b_{k,j}a^{k-1}b^{j-1}$. 
Moreover, we need the defining property of the triangle,
	\begin{align} \label{pascal_property}
		b_{k,j}+b_{k-1,j+1}=b_{k,j+1}.
	\end{align}
	Using \eqref{binomial_theorem} we can add zero in the form 
	\begin{align*}
0=\alpha\wbar+\sum_{l=1}^{n-1}\alpha^l\itwbar{l}\sum_{\mathclap{ k+j=l+1}}(-1)^jb_{k,j}=\alpha\wbar + \sum_{k=1}^{n-1}\sum_{j=1}^{n-k}(-1)^jb_{k,j}\alpha^{k+j-1}\itwbar{k+j-1}
	\end{align*}
	to find that
	\begin{align*}
		(n-1)\fdagger-\alpha\wbar-\sum_{k=1}^{n-1}\itfal{k}
		=\sum_{k=1}^{n-1}\left(\fdagger-\itfal{k}+\sum_{j=1}^{n-k}(-1)^jb_{k,j}\alpha^{k+j-1}\itwbar{k+j-1}\right)
	\end{align*}
and by the triangle inequality this yields \eqref{cancellation_bound} with
\[
	\sigma_k:=\sum_{j=1}^{n-k}(-1)^jb_{k,j}\alpha^{k+j-1}\itwbar{k+j-1},\qquad 
	k\in\N.
\]
It will be convenient to set $\sigma_0:= -\fdagger$ and $\itfal{0}:=0$.  

\medskip
\emph{Step 3: proof of \eqref{l_estimate} for the case $l=2n-1$.} In view of 
\eqref{bound_in_terms_of_itfalk} and \eqref{cancellation_bound} it suffices 
to prove by induction that given $\VSC^{2n-1}(\fdagger,\Phi)$ \eqref{VSC^2n-1} we have 
	\begin{align} \label{induction_hypothesis}
		\norm{\itfal{k}-\fdagger-\sigma_k}^2\leq C\paren{\frac{\delta^2}{\alpha}+\alpha^{2n-2}\psi\left(\frac{-1}{\alpha}\right)},\quad k=0,1,\dots, n-1. 
	\end{align}
For $k=0$ this is trivial. Assume now that \eqref{induction_hypothesis} holds 
true for $k-1$ with $k\in \{1,\dots,n-1\}$. 
Insert $\f=\fdagger+\sigma_k$ in \eqref{eq:recursive_bound} to get
	\begin{align*}
		\norm{\itfal{k}-\fdagger-\sigma_k}^2\le&\frac{1}{\alpha}\left(\norm{\gdagger+\T\sigma_k-\gobs}^2-\norm{\T\itfal{k}-\gobs}^2\right) \\
		&-2\lsp \fdagger+\sigma_k-\itfal{k-1}, \itfal{k}-\fdagger-\sigma_k \rsp. 
	\end{align*}
	Now we add and subtract $\itfal{k-1}-\fdagger-\sigma_{k-1}$ to the first 
	term of the inner product to find
\begin{align*}
		\norm{\itfal{k}-\fdagger-\sigma_k}^2 &\le \frac{1}{\alpha}\left(\norm{\gdagger+\T\sigma_k-\gobs}^2-\norm{\T\itfal{k}-\gobs}^2\right) \\
		& \quad -2\lsp \sigma_k-\sigma_{k-1}, \itfal{k}-\fdagger-\sigma_k \rsp \\
		& \quad +2\norm{\itfal{k-1}-\fdagger-\sigma_{k-1}}\norm{\itfal{k}-\fdagger-\sigma_k} 
	\end{align*}
The last term, denoted by {\new \[E:=2\norm{\itfal{k-1}-\fdagger-\sigma_{k-1}}\norm{\itfal{k}-\fdagger-\sigma_k},\] } will be dealt with at the end of this step.	Because of the identity 
$T^*T\omega^{(l)} =  \omega^{(l-1)}$ and \eqref{pascal_property} we have
	\begin{align*}
		\sigma_k-\sigma_{k-1}&=\alpha^{k-1}\itwbar{k-1}+\sum_{j=1}^{n-k}(-1)^j(b_{k,j}+b_{k-1,j+1})\alpha^{k+j-1}\itwbar{k+j-1} \\
		&=-\frac{1}{\alpha}\T^*\T\sigma_k+(-1)^{n-k}b_{k,n-k+1}\alpha^{n-1}\itwbar{n-1}
	\end{align*}
for $k>1$, and it is easy to see that this also holds true for $k=1$. 
	Therefore, 
	\begin{multline*}
		\lsp \sigma_k-\sigma_{k-1}, \itfal{k}-\fdagger-\sigma_k \rsp= \frac{1}{\alpha}\lsp \T\sigma_k,\gdagger+ \T\sigma_k-\gobs+\gobs-\T\itfal{k} \rsp \\
		+\lsp (-1)^{n-k}b_{k,n-k+1}\alpha^{n-1}\itwbar{n-1}, \itfal{k}-\fdagger-\sigma_k \rsp,
	\end{multline*}
	which yields
	\begin{align} \label{classical_VSC_reasoning}
		\norm{\itfal{k}-\fdagger-\sigma_k}^2 &\le\frac{1}{\alpha}\left(\norm{\gdagger-\gobs}^2-\norm{\T\itfal{k}-\T\sigma_k-\gobs}^2\right) +E  \\
		& \quad-(-1)^{n-k}2 b_{k,n-k+1}\alpha^{n-1}\lsp \itwbar{n-1}, \itfal{k}-\fdagger-\sigma_k \rsp	\notag
	\end{align}
	For shortage of notation denote $b=2 b_{k,n-k+1}$. Apply $\VSC^{2n-1}(\fdagger,\Phi)$ \eqref{VSC^2n-1} with $\f=(-1)^{n-k}(\itfal{k}-\fdagger-\sigma_k)/(b\alpha^{n-1})$ and multiply by $(b\alpha^{n-1})^2$ to obtain 
	\begin{multline*}
		-(-1)^{n-k} 2 b_{k,n-k+1}\alpha^{n-1}  \lsp \itwbar{n-1}, \itfal{k}-\fdagger-\sigma_k \rsp \\
		\le \frac{1}{2}\norm{\itfal{k}-\fdagger-\sigma_k}^2+(b\alpha^{n-1})^2\Phi\left((b\alpha^{n-1})^{-2}\norm{\T\itfal{k}-\gdagger-\T\sigma_k}^2\right).
	\end{multline*}
	Combining this bound with \eqref{classical_VSC_reasoning} yields 
	\begin{align*}
		\frac{1}{2}\norm{\itfal{k}-\fdagger-\sigma_k}^2 \le&\frac{1}{\alpha}\left(\norm{\gdagger-\gobs}^2-\norm{\T\itfal{k}-\T\sigma_k-\gobs}^2\right) +E \\
		&+(b\alpha^{n-1})^2\Phi\left((b\alpha^{n-1})^{-2}\norm{\T\itfal{k}-\gdagger-\T\sigma_k}^2\right).
	\end{align*}
	 Then we have
{\allowdisplaybreaks	\begin{align*}
		\frac{1}{2}\norm{\itfal{k}-\fdagger-\sigma_k}^2 \le &\frac{1}{\alpha}\left(\norm{\gdagger-\gobs}^2-\norm{\T\itfal{k}-\T\sigma_k-\gobs}^2\right) +E \\
		& \quad +(b\alpha^{n-1})^2\Phi\left((b\alpha^{n-1})^{-2}\norm{\T\itfal{k}-\gdagger-\T\sigma_k}^2\right) \\
		& {\new\le}\frac{\delta^2}{\alpha}-\frac{1}{\alpha}\norm{\T\itfal{k}-\T\sigma_k-\gdagger}^2 +E \\
		& \quad +(b\alpha^{n-1})^2\Phi\left((b\alpha^{n-1})^{-2}\norm{\T\itfal{k}-\gdagger-\T\sigma_k}^2\right) \\
		&\le \frac{\delta^2}{\alpha}+b^2\alpha^{2n-2}\sup_{\tau\ge 0}\left[\frac{-\tau}{\alpha}-\left(-\Phi(\tau)\right)\right] +E \\
		&=\frac{\delta^2}{\alpha}+4 b_{k,n-k+1}^2\alpha^{2n-2}\psi\left(\frac{-1}{\alpha}\right)+E.
	\end{align*}}%
	To get rid of $E=2\norm{\itfal{k-1}-\fdagger-\sigma_{k-1}}\norm{\itfal{k}-\fdagger-\sigma_k}$ subtract the term $\frac{1}{4}\norm{\itfal{k}-\fdagger-\sigma_k}^2$ on both sides and use Young's inequality as well as the induction hypothesis \eqref{induction_hypothesis}.
	
\medskip
	\emph{Step 4: Proof of \eqref{l_estimate} for the case $l=2n$.} In view of 
	\eqref{bound_in_terms_of_itfalk} and \eqref{cancellation_bound} 
it suffices to prove by induction that given $\VSC^{2n}(\fdagger,\Phi)$ \eqref{VSC^2n} we have  
	\begin{align} \label{induction_hypothesis2}
		\norm{\itfal{k}-\fdagger-\sigma_k}^2\leq C\paren{\frac{\delta^2}{\alpha}+\alpha^{2n-1}\psi\left(-\frac{1}{\alpha}\right)}, \qquad 
		k=0,\dots,n-1.
	\end{align}
	Again, the case $k=0$ is trival. Assume that \eqref{induction_hypothesis2} holds true for all $j=1,\dots, k-1$. Note that 
	\begin{align*}
		&\norm{\itfal{k}-\fdagger-\sigma_k}^2 =\lsp\itfal{k}-\fdagger-\sigma_k, \itfal{k}-\fdagger-\sigma_k\rsp \\
		& \le \lsp\itfal{k}-\fdagger-\sigma_k, \sum_{j=1}^k \left(\itfal{j}-\fdagger-\sigma_j \right)\rsp \\
		& \quad +\sum_{j=1}^{k-1}\norm{\itfal{k}-\fdagger-\sigma_k}\norm{\itfal{j}-\fdagger-\sigma_j }.
	\end{align*}
	Then Young's inequality together with the induction hypothesis 
	\eqref{induction_hypothesis2}  gives 
	\begin{align} \label{bound_after_applying_induction_hypo}
	\begin{split}
		\frac{1}{2}\norm{\itfal{k}-\fdagger-\sigma_k}^2  &\le \lsp\itfal{k}-\fdagger-\sigma_k, \sum_{j=1}^k \left(\itfal{j}-\fdagger-\sigma_j \right)\rsp \\
		& \quad +C\paren{\frac{\delta^2}{\alpha}+\alpha^{2n-1}\psi\left(-\frac{1}{\alpha}\right)}.
		\end{split}
	\end{align}
A simple computation (for example another induction) shows that 
	\begin{align*}
		-\alpha\wbar-\sum_{j=1}^k \sigma_j= \sum_{j=1}^{n-k-1}(-1)^jb_{k,j}\alpha^{k+j}\itwbar{k+j}=:\hat{\sigma}_k.
	\end{align*}
	{\new By $\VSC^{2n}(\fdagger,\Phi)$ \eqref{VSC^2n} we have $\sigma_k\in\range T^*$} and by Proposition \ref{prop} we have $\T^*\itpal{k}\in\partial\Ritpen{k}(\itfal{k})=\{\itfal{k}-\sum_{j=1}^{k-1}\T^*\itpal{j}\}$ as well as
	$-\alpha\itpal{j}\in\partial\Sfun(\T\itfal{j}-\gobs)=\{\T\itfal{j}-\gobs\}$ such that 
	\begin{align*}
		&\lsp\itfal{k}-\fdagger-\sigma_k, \sum_{j=1}^k \left(\itfal{j}-\fdagger-\sigma_j \right)\rsp \\
		=&\lsp\sum_{j=1}^{k}\T^*\itpal{j}-\T^*\pbar-\T^*(\T^{*-1}\sigma_k), \sum_{j=1}^k \itfal{j} -k\fdagger+\alpha\wbar+\hat{\sigma}_k\rsp  \\
		=&\lsp\sum_{j=1}^{k}\itpal{j}-\pbar-(\T^{*-1}\sigma_k), \sum_{j=1}^k (-\alpha\itpal{j}) +\alpha\T\wbar+\T\hat{\sigma}_k+k(\gobs-\gdagger)\rsp \\
		= &\alpha\lsp \pbar+(\T^{*-1}\sigma_k)-\sum_{j=1}^{k}\itpal{j}, \sum_{j=1}^k \itpal{j} -\pbar-\frac{\T\hat{\sigma}_k}{\alpha}\rsp+ kE,
	\end{align*}
	where $E:=\lsp \pbar+(\T^{*-1}\sigma_k)-\sum_{j=1}^{k}\itpal{j}, \gdagger-\gobs \rsp$. On the right hand side of the scalar product we now exchange  $\sum_{j=1}^k \itpal{j} -\pbar$ by $(\T^{*-1}\sigma_k)$  to find
	\begin{align*}
		&\lsp \pbar+(\T^{*-1}\sigma_k)-\sum_{j=1}^{k}\itpal{j}, \sum_{j=1}^k \itpal{j} -\pbar-\frac{\T\hat{\sigma}_k}{\alpha}\rsp  \\
		=& \lsp \pbar+(\T^{*-1}\sigma_k)-\sum_{j=1}^{k}\itpal{j}, (\T^{*-1}\sigma_k)-\frac{\T\hat{\sigma}_k}{\alpha}\rsp 
		-\norm{\pbar+(\T^{*-1}\sigma_k)-\sum_{j=1}^{k}\itpal{j}}^2 
	\end{align*}
and together with the identity 
	\begin{align*}
		(\T^{*-1}\sigma_k)-\frac{\T\hat{\sigma}_k}{\alpha}&=\sum_{j=1}^{n-k}(-1)^jb_{k,j}\alpha^{k+j-1}\itpbar{k+j}+\sum_{j=1}^{n-k-1}(-1)^jb_{k,j}\alpha^{k+j-1}\itpbar{k+j} \\
		&=(-1)^{n-k}b_{k,n-k}\alpha^{n-1}\itpbar{n}
	\end{align*}
	it follows that 
	\begin{multline}	 \label{allinall}
		\lsp\itfal{k}-\fdagger-\sigma_k, \sum_{j=1}^k \left(\itfal{j}-\fdagger-\sigma_j \right)\rsp = 
		-\alpha\norm{\pbar+(\T^{*-1}\sigma_k)-\sum_{j=1}^{k}\itpal{j}}^2 \\
		+b_{k,n-k}\alpha^n\lsp (-1)^{n-k}\left(\pbar+(\T^{*-1}\sigma_k)-\sum_{j=1}^{k}\itpal{j}\right),\itpbar{n} \rsp+kE , 
	\end{multline}
	so we are finally in a position to apply $\VSC^{2n-1}(\fdagger,\Phi)$ \eqref{VSC^2n}. 
	For shortage of notation denote {\new $\tilde{b}=4b_{k,n-k}$} and $\tilde{p}=\pbar+(\T^{*-1}\sigma_k)-\sum_{j=1}^{k}\itpal{j}$. Choose $\p=(-1)^{n-k}\tilde{p}/(\tilde{b}\alpha^{n-1})$, and multiply 
	the inequality by $\alpha(\tilde{b}\alpha^{n-1})^2$  to obtain 
	\begin{align} \label{applied_VSC^2n}
	\begin{aligned}
		&4\tilde{b}_{k,n-k}\alpha^n\lsp (-1)^{n-k}\left(\pbar+(\T^{*-1}\sigma_k)-\sum_{j=1}^{k}\itpal{j}\right),\itpbar{n} \rsp \\
		&\le \frac{\alpha}{2}\norm{\tilde{p}}^2 +\tilde{b}^2\alpha^{2n-1}\Phi\left((\tilde{b}\alpha^{n-1})^{-2}\norm{\T^*\tilde{p}}^2\right).
		\end{aligned}
	\end{align}	
	Now combine \eqref{bound_after_applying_induction_hypo}, \eqref{allinall} and \eqref{applied_VSC^2n} to find
	\begin{multline} 
		2\norm{\itfal{k}-\fdagger-\sigma_k}^2 
		\leq \tilde{b}^2\alpha^{2n-1} \Phi\left((\tilde{b}\alpha^{n-1})^{-2}\norm{\fdagger+\sigma_k-\itfal{k}}^2\right) \\ 
		+\frac{\alpha}{2}\norm{\tilde{p}}^2-4\alpha\norm{\tilde{p}}^2 + 4kE 
		+C\paren{\frac{\delta^2}{\alpha}+\alpha^{2n-1}\psi\left(-\frac{1}{\alpha}\right)}. 
		\label{close_to_finished}
	\end{multline}
	Completing the square, we get  
	\begin{align*}
		\frac{\alpha}{2}\norm{\tilde{p}}^2-4\alpha\norm{\tilde{p}}^2 + 4kE 
		= -\frac{7}{2}\alpha\norm{\tilde{p}}^2
		+ 4\lsp \tilde{p},\gdagger-\gobs\rsp 
		\leq \frac{8k^2\delta^2}{7\alpha}.
	\end{align*}
Now we subtract 
$\big\|\itfal{k}-\fdagger-\sigma_k\big\|^2$ in \eqref{close_to_finished} from both sides to find
	\begin{align*}
		\norm{\itfal{k}-\fdagger-\sigma_k}^2 &\le \tilde{b}^2\alpha^{2n-1} \Phi\left((\tilde{b}\alpha^{n-1})^{-2}\norm{\fdagger+\sigma_k-\itfal{k}}^2\right)  \\
		&\quad -\norm{\itfal{k}-\fdagger-\sigma_k}^2+C\paren{\frac{\delta^2}{\alpha}+\alpha^{2n-1}\psi\left(-\frac{1}{\alpha}\right)} \\
		\le& \tilde{b}^2   \alpha^{2n-1} \sup_{\tau\ge 0}\left[\frac{-\tau}{\alpha}-(-\Phi\left(\tau\right))\right]
		+C\paren{\frac{\delta^2}{\alpha}+\alpha^{2n-1}\psi\left(-\frac{1}{\alpha}\right)} \\
		=& 16b_{k,n-k}^2\alpha^{2n-1} \psi\left(\frac{-1}{\alpha}\right)
		+C\paren{\frac{\delta^2}{\alpha}+\alpha^{2n-1}\psi\left(-\frac{1}{\alpha}\right)}.\qquad\qed
	\end{align*}
\end{proof}

Note that under a spectral source condition as on the left hand side of the 
implication \eqref{eq:ssc_imply_vsc}, the VSC of the right hand side of 
\eqref{eq:ssc_imply_vsc} and Theorem \ref{Hilbert_theorem} yield the  
error bound $C(\delta/\alpha^2+\alpha^{l-1+\nu})$. For the choice 
$\alpha\sim \delta^{2/(l+\nu)}$ this leads to the optimal convergence 
rate $\big\|\itfal{m}-\fdagger\big\|= \landauO{\delta^{(l-1+\nu)/(l+\nu)}}$. 
However, we have derived this rate under the weaker assumption 
$\VSC^l(\fdagger,A\id^{\nu/(\nu+1)})$ using only variational, but no spectral arguments. 

\section{Higher order convergence rates in Banach spaces} \label{sec:third_order}

In this section we will introduce a third order version of the variational source condition 
\eqref{eq:vsc1} in Banach spaces. Let us abbreviate \eqref{eq:vsc1} 
by $\VSC^1(\fdagger,\Phi,\Rpen,\Sfun)$ in the following. 
First we give a definition for the second order source condition in Banach spaces 
based on \cite[(4.2)]{Grasmair2013}.

\begin{defi}[Variational source condition $\VSC^2(\fdagger,\Phi,\Rpen,\Sfun)$] \label{VSC^2}  Let $\Phi$ 
 be an index function and $\Rpen$ a proper, convex, lower-semicontinuous functional on $\X$.  We say that $\fdagger\in \X$ satisfies 
the second order variational source condition $\VSC^2(\fdagger,\Phi,\Rpen,\Sfun)$ if 
there exist $\pbar\in\Y^*$ such that $\T^*\pbar\in\partial\Rpen(\fdagger)$ and $\pstar\in \partial \Sfun^*(\pbar)$ such that
	\begin{align}\label{eq:VSC^2_inequality}
	\forall p\in\Y^*: \qquad
	\lsp \pbar-p,\pstar\rsp \le \frac{1}{2}\breg{\Sfun{}^*}^{\pstar}\!\paren{p,\pbar} 
	+ \Phi\left(\breg{\Rpen^*}^{\fdagger}\!\paren{T^*p,T^*\pbar} \right).
	\end{align}
\end{defi}

\begin{rem}
	Let Assumption \ref{assump_RS} hold. Then $\partial\Sfun^*(\pbar)=\{J_{q^*,\Y^*}(\pbar)\}$, with $J_{q^*,\Y^*}$ being the duality mapping defined in the appendix. So $\VSC^2(\fdagger,\Phi,\Rpen,\Sfun)$ is equivalent to \cite[(4.2)]{Grasmair2013} up to the additional term $\frac{1}{2}\breg{\Sfun^*}\!\paren{p,\pbar}$. It is easy to see from the proof of \cite[Theorem 4.4]{Grasmair2013}, that one still can conclude convergence rates
	\begin{align} \label{eq:Grasmairs_rates}
		\breg{\Rpen}\!\paren{\fal,\fdagger}\le \alpha^{q^*-1}(-\Phi)^*\paren{-1/\alpha^{q^*-1}}+\tilde{D}\frac{\delta^q}{\alpha},
	\end{align}
	with a slightly changed constant $\tilde{D}>0$.
\end{rem}	

\begin{defi}[Variational source condition $\VSC^3(\fdagger,\Phi,\Rpen,\Sfun)$] \label{VSC^3} 
Let $\Phi$ be an index function and $\Rpen$ a proper, convex, lower-semicontinuous functional on $\X$. We say that $\fdagger\in \X$ satisfies 
the third order variational source condition $\VSC^3(\fdagger,\Phi,\Rpen,\Sfun)$ if 
there exist $\pbar\in\Y^*$ and 
	$\wbar\in \X$ such that $\T^*\pbar\in\partial\Rpen(\fdagger)$ and $\T\wbar\in\partial\Sfun^*(\pbar)$  
and if there exist constants $\beta\ge 0$, $\mu>1$ and $\overline{t}>0$ 
as well as $\f_t^*\in\partial \Rpen(\fdagger-t\wbar)$ for all $0<t \leq \overline{t}$ such that
	\begin{alignat*}{2}
	&\forall \f\in\X\; \forall t\in (0,\overline{t}] \colon &&\\ 
	&\lsp  \f_t^*-\T^*\pbar,\fdagger-t\wbar-\f\rsp \le && \breg{\Rpen}^{f_t^*}\!\paren{\f,\fdagger-t\wbar} \\
	& &&+ t^2\Phi\left(t^{-q}\norm{\T \f-\gdagger+t\T\wbar}^q\right)
		+\beta t^{2\mu}.
	\end{alignat*}
\end{defi}

\begin{rem}
{\new 
To see how $\VSC^2$ and $\VSC^3$ relate to other source conditions, recall from the introduction 
that the strongest first order variational source condition is $\VSC^1(\fdagger,C\sqrt{\cdot},\Rpen,\Sfun)$, which 
is equivalent to the existence of $\pbar\in\Y^*$ such that $\T^*\pbar\in\partial\Rpen(\fdagger)$ 
(see \cite[Propositions 3.35, 3.38]{Scherzer2008}). So by assuming the existence of such 
$\pbar\in\Y^*$, $\VSC^2$ and $\VSC^3$ are stronger than $\VSC^1$. Similarly, as discussed in the introduction 
they are also stronger than the multiplicative variational source conditions in \cite{Andreev2015,KH:10} 
and approximate (variational) source conditions (\cite{flemming:12b}).  

 Now 
let $\X$ and $\Y$ be Hilbert spaces and $\Rpen_{\rm sq}(\f):=\tfrac{1}{2}\|\f\|_{\X}^2$, $\Sfun_{\rm{sq}}(g)=\frac{1}{2}\norm{g}_\Y^2$. 
Then clearly the $\VSC^2(\fdagger,\Phi,\Rpen_{\rm sq},\Sfun_{\rm sq})$ is equivalent to $\VSC^2(\fdagger,\Phi)$.}
We also have that the $\VSC^3(\fdagger,\Phi,\Rpen_{\rm sq},\Sfun_{\rm sq})$ is equivalent to $\VSC^3(\fdagger,\Phi)$: In fact, for arbitrary $\beta\ge 0$ and $\mu>1$ 
the condition $\VSC^3(\fdagger,\Phi)$ is  equivalent to
	\begin{align*}
\forall \f\in\HilbertX \;\forall t>0\colon\qquad 
		\lsp  \wbar,\f \rsp\le\frac{1}{2}\norm{\f}^2+\Phi\left(\norm{\T\f}^2\right)+\beta t^{2\mu-2},
	\end{align*}
as the limit $t\to 0$ gives back the original inequality.
	Now we replace $\f$ by $\frac{\f-\fdagger+t\wbar}{t}$ and multiply by $t^2$ to see that this is equivalent to
	\begin{align*}
		\lsp -t\wbar, \fdagger-t\wbar-f \rsp \le \frac{1}{2}\norm{f-\fdagger+t\wbar}^2+t^2\Phi\left(\frac{\norm{\T\f-\gdagger+t\T\wbar}^2}{t^2} \right)+\beta t^{2\mu},
	\end{align*}
	 which is equivalent to $\VSC^3(\fdagger,\Phi,\Rpen_{\rm sq},\Sfun_{\rm sq})$.
\end{rem}

We can now state the main result of this section: 
\begin{thm} \label{rate_theorem}
Suppose Assumption \ref{assump_RS} and  that $\VSC^3(\fdagger,\Phi,\Rpen,\Sfun)$ is satisfied with constants $\beta,\mu$, and $\overline{t}$  
and that {\new $c^{-1}\delta\le \alpha^{q^*-1}\leq \overline{t}$  for some $c>0$}. Define $\widetilde{\Phi}(s)=\Phi(s^{q/r})$. Then the error is bounded by 
	\begin{align*}
		\breg{\Rpen}\!\paren{\itfal{2},\fdagger}\le C\left(\frac{\delta^q}{\alpha}
		+\alpha^{2(q^*-1)}\left(-\widetilde{\Phi}\right)^*\left(\frac{\tilde{C}\left(c+\norm{\T\wbar}\right)^{q-\conv}}{-\alpha^{q^*-1}}\right)
		+\beta\alpha^{2\mu(q^*-1)}\right)
	\end{align*}
with constants $C,\tilde{C}>0$ depending  {\new at most on $q$, $\conv$, $c$,} and $\cXR$ and $\csXR$ from Lemma \ref{xu-roach}.
\end{thm}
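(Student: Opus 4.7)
My plan is to apply Lemma \ref{Lemma11.19} with $n=2$ at a carefully chosen test vector, use the third order variational source condition to estimate the remaining Bregman-distance remainder, and close the rest via the Xu--Roach inequality from the appendix (which encodes the $q$-smoothness and $\conv$-convexity of $\Y$). First I would set $t:=\alpha^{q^*-1}$, which lies in $(0,\overline{t}]$ by the hypothesis $\alpha^{q^*-1}\le\overline{t}$, apply Lemma \ref{Lemma11.19} with test vector $\f=\fdagger-t\wbar$, and abbreviate $s:=\pbar-\pal$ and $h:=(\gdagger-\gobs)-t\T\wbar$. This yields
\[
\breg{\Rpen}^{\T^*\pbar}\!\paren{\itfal{2},\fdagger}\le\tfrac{1}{\alpha}\Sfun(h)+\lsp s,h\rsp+\tfrac{1}{\alpha}\Sfun^*(-\alpha s)+\breg{\Rpen}^{\T^*\pbar}\!\paren{\fdagger-t\wbar,\fdagger},
\]
and $\delta\le c\alpha^{q^*-1}=ct$ gives the a-priori bound $\norm{h}\le t(c+\norm{\T\wbar})$, which will be the key quantitative input.

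Next, I would reformulate $\VSC^3(\fdagger,\Phi,\Rpen,\Sfun)$ in a cleaner shape. Expanding both Bregman distances appearing in the definition about the single subgradient $\T^*\pbar$ and cancelling the $f_t^*$-dependent terms shows that $\VSC^3$ is equivalent to
\[
\breg{\Rpen}^{\T^*\pbar}\!\paren{\fdagger-t\wbar,\fdagger}\le\breg{\Rpen}^{\T^*\pbar}\!\paren{\f,\fdagger}+t^2\Phi\!\paren{t^{-q}\norm{\T\f-\gdagger+t\T\wbar}^q}+\beta t^{2\mu}
\]
for every $\f\in\X$. Specializing to $\f=\fdagger$ annihilates the first summand and reduces the $\Phi$-argument to $\norm{\T\wbar}^q$, so the last term of the previous display is bounded by $t^2\Phi(\norm{\T\wbar}^q)+\beta t^{2\mu}$.

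The data-fidelity group is then recast via Young--Fenchel as $\tfrac{1}{\alpha}\breg{\Sfun}^{-\alpha s}(h,g_s)$ with $g_s\in\partial\Sfun^*(-\alpha s)$. Using the extremal relation $\T\wbar\in\partial\Sfun^*(\pbar)$ built into $\VSC^3$, I would expand this Bregman distance about the reference point $-\alpha\pbar$; combining the $q$-smoothness of $\Y$ with the $\conv$-convexity of $\Y$ via Lemma \ref{xu-roach} produces a main contribution of order $\delta^q/\alpha$ plus a residual in which $\norm{h}$ appears raised to a power strictly larger than $q$, weighted by $(c+\norm{\T\wbar})^{q-\conv}$ that reflects the gap between the two geometric exponents. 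The bound $\norm{h}\le t(c+\norm{\T\wbar})$ from the first step collapses this residual into a purely geometric factor matching the form in the theorem. To finish, the Fenchel--Young inequality $\Phi(u)\le(-\widetilde{\Phi})^*(-\eta)+\eta u^{\conv/q}$ applied to $t^2\Phi(\norm{\T\wbar}^q)$ with $\eta=\tilde C(c+\norm{\T\wbar})^{q-\conv}/\alpha^{q^*-1}$ manufactures the $(-\widetilde{\Phi})^*$-summand; the residual $\eta\norm{\T\wbar}^{\conv}$ merges with the Xu--Roach residual, and $\beta t^{2\mu}=\beta\alpha^{2\mu(q^*-1)}$ supplies the last summand. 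The main obstacle is this interpolation step: producing exactly the exponent $q-\conv$ requires interlocking the $q$-smooth upper bound on $\Sfun$ with the $\conv^*$-smooth upper bound on $\Sfun^*$ through Lemma \ref{xu-roach}, and it is precisely the hypothesis $\delta\le c\alpha^{q^*-1}$ that keeps $\norm{h}/t$ uniformly bounded so that the residual genuinely collapses into the $(-\widetilde{\Phi})^*$ form with the stated constants $\cXR$ and $\csXR$.
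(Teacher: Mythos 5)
Your opening move (Lemma \ref{Lemma11.19} with $n=2$) and your algebraic reformulation of $\VSC^3$ as
$\breg{\Rpen}^{\T^*\pbar}(\fdagger-t\wbar,\fdagger)\le\breg{\Rpen}^{\T^*\pbar}(\f,\fdagger)+t^2\Phi(t^{-q}\|\T\f-\gdagger+t\T\wbar\|^q)+\beta t^{2\mu}$ are both correct (the $f_t^*$-terms do cancel). But the step where you specialize this to $\f=\fdagger$ destroys the argument. With $\f=\fdagger$ you obtain the fixed bound $t^2\Phi(\|\T\wbar\|^q)+\beta t^{2\mu}=O(\alpha^{2(q^*-1)})$, and no Fenchel conjugate can emerge from a constant: the mechanism by which $(-\widetilde{\Phi})^*$ appears requires that the argument of $\Phi$ be a \emph{free, data-dependent} quantity which also occurs with a negative sign elsewhere in the estimate, so that taking the supremum over it produces the conjugate. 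The paper achieves this by applying the source condition at $\f=\fal$, $t=\alpha^{q^*-1}$ (inside Lemma \ref{dual_conv}), and then subtracting $\tfrac{1}{\alpha}\breg{\Sfun}(\T\fal-\gobs,-\alpha^{q^*-1}\T\wbar)$, whose Xu--Roach lower bound supplies the matching negative multiple of $\|\T\fal-\gdagger+\alpha^{q^*-1}\T\wbar\|^{\conv}$; Lemma \ref{img_conv} is what keeps that residual of size $O(\alpha^{q^*-1})$ so the constants close. Your substitute -- the Fenchel--Young inequality $\Phi(u)\le(-\widetilde{\Phi})^*(-\eta)+\eta u^{\conv/q}$ applied to the constant $u=\|\T\wbar\|^q$ -- leaves the uncancelled residual $t^2\eta\|\T\wbar\|^{\conv}\sim\tilde C(c+\|\T\wbar\|)^{q-\conv}\|\T\wbar\|^{\conv}\,\alpha^{q^*-1}$, which is of order $\alpha^{q^*-1}$ and hence far larger than every term in the claimed bound; there is no negative term in your decomposition involving $\|t\T\wbar\|$ against which it could ``merge.'' Since $(-\widetilde{\Phi})^*(-1/s)\to 0$ as $s\to 0$, a bound of order $\alpha^{2(q^*-1)}$ (let alone $\alpha^{q^*-1}$) provably cannot imply the theorem.

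A second, independent gap is the treatment of the data-fidelity group. You correctly identify $\tfrac{1}{\alpha}\Sfun(h)+\lsp s,h\rsp+\tfrac{1}{\alpha}\Sfun^*(-\alpha s)$ as $\tfrac{1}{\alpha}\breg{\Sfun}^{-\alpha s}(h,g_s)$ with $s=\pbar-\pal$, but then propose to ``expand this Bregman distance about $-\alpha\pbar$'' and extract $\delta^q/\alpha$ plus a controllable residual. Any such expansion requires a quantitative bound on the dual error $\|\pbar-\pal\|$ (equivalently on $\breg{\Sfun^*}(-\alpha\pal,-\alpha\pbar)$), and establishing precisely that bound is the entire content of Lemma \ref{dual_conv} -- the hard part of the proof. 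As written, your argument is circular at this point. To repair the proof you would essentially have to reproduce the paper's route: reduce to the dual Bregman distance via Lemma \ref{starting_lemma}, prove the image-space rate of Lemma \ref{img_conv}, and then run the $\VSC^3$-plus-Xu--Roach cancellation on the symmetric Bregman distance $\symbreg{\Sfun^*}(-\alpha\pal,-\alpha\pbar)$ with the test element $\f=\fal$.
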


The proof consists of the following three lemmas. 
First we show that $\breg{\Rpen}(\itfal{2},\fdagger)$ is related to the Bregman distance 
$\frac{1}{\alpha}\breg{\Sfun^*}(-\alpha\pal,-\alpha\pbar)$ as we will later actually use
$\VSC^3(\fdagger,\Phi,\Rpen,\Sfun)$ to prove convergence rates for $\pal$.
\begin{lem} \label{starting_lemma}
	If $\T^*\pbar\in\partial\Rpen(\fdagger)$, then 
	\begin{align*}
		\breg{\Rpen}\!\paren{\itfal{2},\fdagger}\le\frac{2}{\alpha}\left(\Sfun\paren{\gdagger-\gobs}
		+\frac{1}{\csXR}\breg{\Sfun^*}\paren{-\alpha\pal,-\alpha\pbar}\right).
	\end{align*}
\end{lem}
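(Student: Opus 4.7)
The plan is to instantiate Lemma~\ref{Lemma11.19} for $n=2$ at the trial point $\f=\fdagger$, then tidy the resulting bound with a scaled Young inequality and a Xu--Roach-type lower bound on the Bregman distance in $\Y^*$. Since $\breg{\Rpen}^{\T^*\pbar}(\fdagger,\fdagger)=0$ and, by Proposition~\ref{prop}, $s^{(2)}_\alpha=\pbar-\pal$, this substitution yields
\[
\breg{\Rpen}\!\paren{\itfal{2},\fdagger}\le\tfrac{1}{\alpha}\Sfun(\gdagger-\gobs)+\lsp \pbar-\pal,\gdagger-\gobs\rsp+\tfrac{1}{\alpha}\Sfun^*\!\paren{-\alpha(\pbar-\pal)}.
\]

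The mixed middle term is next handled by the scaled Fenchel--Young inequality
\[
\lsp \pbar-\pal,\gdagger-\gobs\rsp=\tfrac{1}{\alpha}\lsp \alpha(\pbar-\pal),\gdagger-\gobs\rsp\le\tfrac{1}{\alpha}\Sfun(\gdagger-\gobs)+\tfrac{1}{\alpha}\Sfun^*\!\paren{\alpha(\pbar-\pal)}.
\]
Using that $\Sfun^*(p)=\tfrac{1}{q^*}\norm{p}^{q^*}$ is even in $p$, the two $\Sfun^*$-terms merge, giving
\[
\breg{\Rpen}\!\paren{\itfal{2},\fdagger}\le\tfrac{2}{\alpha}\Sfun(\gdagger-\gobs)+\tfrac{2}{\alpha}\Sfun^*\!\paren{\alpha(\pbar-\pal)}.
\]

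It then remains to replace $\Sfun^*(\alpha(\pbar-\pal))$ by $\tfrac{1}{\csXR}\breg{\Sfun^*}(-\alpha\pal,-\alpha\pbar)$, which is the content of the Xu--Roach inequality (Lemma~\ref{xu-roach}) applied in the $q^*$-convex dual space $\Y^*$: in such a space the Bregman distance of $p\mapsto\tfrac{1}{q^*}\norm{p}^{q^*}$ dominates, with constant $\csXR$, the functional $\Sfun^*$ evaluated at the difference of its two arguments. Inserting this bound with $p_1=-\alpha\pal$, $p_2=-\alpha\pbar$ into the previous display produces the claimed inequality.

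The main obstacle I foresee is purely bookkeeping, namely aligning the one-sided Xu--Roach estimate with the exact constant $\csXR$ used in the statement and verifying that the $q$-smoothness of $\Y$ does translate into the required convexity of $\Sfun^*$ on $\Y^*$; the two analytic ingredients, Lemma~\ref{Lemma11.19} and the scaled Fenchel--Young inequality, slot in immediately.
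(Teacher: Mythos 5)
Your proposal is correct and follows essentially the same route as the paper's own proof: Lemma~\ref{Lemma11.19} with $\f=\fdagger$, the generalized Young inequality on the cross term, and Lemma~\ref{xu-roach} in the $q^*$-convex dual space $\Y^*$. The "bookkeeping" concern you raise is harmless, since Lemma~\ref{xu-roach} with $q=r=q^*$ gives $\Sfun^*(x-y)\le (q^*\csXR)^{-1}\breg{\Sfun^*}(x,y)\le \csXR^{-1}\breg{\Sfun^*}(x,y)$, which is exactly the constant in the statement.
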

\begin{proof}
	We apply Lemma \ref{Lemma11.19} with $\f=\fdagger$ to find
	\begin{align*}
	\breg{\Rpen}\!\paren{\itfal{2},\fdagger}\le\frac{1}{\alpha}\Sfun\paren{\gdagger-\gobs}
+\lsp \pbar-\pal,\gdagger-\gobs\rsp+\frac{1}{\alpha}\Sfun^*\paren{-\alpha(\pbar-\pal)}.
	\end{align*}
	 The generalized Young inequality applied to the middle term yields 
	\begin{align*}
\breg{\Rpen}\!\paren{\itfal{2},\fdagger} &\le\frac{2}{\alpha}\left(\Sfun\paren{\gdagger-\gobs}
  +\Sfun^*\paren{-\alpha(\pbar-\pal)}\right).
	\end{align*}
	As $\Y$ is $q$-smooth, $\Y^*$ is $q^*$ convex, so we can apply Lemma~\ref{xu-roach} to obtain 
	\begin{align*}
		\breg{\Rpen}\!\paren{\itfal{2},\fdagger}&\le\frac{2}{\alpha}\left(\Sfun\paren{\gdagger-\gobs}
  +\csXR^{-1}\breg{\Sfun^*}\paren{-\alpha\pal,-\alpha\pbar}\right).\qquad\qed
	\end{align*}
\end{proof}

The next lemma shows convergence rates in the image space. Such rates have also been shown under a first order variational source condition on $\fdagger$ in \cite[Theorem 2.3]{HW:14}.

\begin{lem} \label{img_conv}
Suppose there exist $\pbar\in \Y^*$ and $\wbar\in \X$ such that $\T^*\pbar\in\partial\Rpen(\fdagger)$  and $\T\wbar\in\partial\Sfun^*(\pbar)$. 
Then there exists a constant $C_q>0$ depending only on $q$ such that
	\begin{align*}
		\norm{\T\fal-\gdagger}\le C_q\left(\delta +\alpha^{q^*-1}\norm{\T\wbar}\right).
	\end{align*}
\end{lem}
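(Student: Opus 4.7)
The strategy is to bound $\norm{\T\fal-\gobs}^q$ first and then absorb the extra $\delta$ by a final triangle inequality $\norm{\T\fal-\gdagger}\le\norm{\T\fal-\gobs}+\delta$. Starting from the minimizing property for \eqref{eq:genTikh} and the subgradient inequality $\Rpen(\fdagger)-\Rpen(\fal)\le\lsp\pbar,\T\fdagger-\T\fal\rsp$ associated with the first hypothesis $\T^*\pbar\in\partial\Rpen(\fdagger)$, together with $\breg{\Rpen}(\fal,\fdagger)\ge 0$, I would derive
\[
\frac{1}{q\alpha}\norm{\T\fal-\gobs}^q\le\frac{\delta^q}{q\alpha}-\lsp\pbar,\T\fal-\gobs\rsp+\norm{\pbar}\delta
\]
after splitting $\T\fdagger-\T\fal=(\gobs-\T\fal)+(\T\fdagger-\gobs)$ and invoking the noise bound \eqref{eq:det_noise}.

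The inner product term is then absorbed by Young's inequality with weight tuned so that $\tfrac{1}{2q\alpha}\norm{\T\fal-\gobs}^q$ cancels half of the left-hand side, leaving a remainder proportional to $\alpha^{q^*-1}\norm{\pbar}^{q^*}$. This is the point where the second hypothesis $\T\wbar\in\partial\Sfun^*(\pbar)$ enters: since $\Sfun^*(\cdot)=\tfrac{1}{q^*}\norm{\cdot}_{\Y^*}^{q^*}$, its subdifferential at $\pbar$ is the duality mapping $J_{q^*,\Y^*}(\pbar)$ (as recalled in the appendix), hence $\norm{\T\wbar}=\norm{\pbar}^{q^*-1}$. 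Using the identity $(q^*-1)q=q^*$ this rewrites as $\alpha^{q^*-1}\norm{\pbar}^{q^*}=(\alpha^{q^*-1}\norm{\T\wbar})^q$, which is the desired form. The cross term $\norm{\pbar}\delta=\norm{\T\wbar}^{q-1}\delta$ is handled analogously: the identity $(q^*-1)(q-1)=1$ lets me rewrite $\alpha\norm{\T\wbar}^{q-1}=(\alpha^{q^*-1}\norm{\T\wbar})^{q-1}$, and a second application of Young's inequality (together with $(q-1)q^*=q$) then bounds the cross term by $C(\delta^q+(\alpha^{q^*-1}\norm{\T\wbar})^q)$.

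Collecting the estimates yields $\norm{\T\fal-\gobs}^q\le C_q(\delta^q+(\alpha^{q^*-1}\norm{\T\wbar})^q)$. Taking $q$-th roots, using subadditivity $(a^q+b^q)^{1/q}\le a+b$, and finally adding the triangle-inequality contribution $\delta$ from the noise produces the claimed bound with a constant depending only on $q$. I do not expect a conceptual obstacle — the proof is essentially a single use of the minimality of $\fal$ followed by careful bookkeeping with the conjugate exponent $q^*$. The only somewhat delicate point is recognising that the natural quantity arising from the first Young inequality, $\alpha^{q^*-1}\norm{\pbar}^{q^*}$, coincides exactly with $(\alpha^{q^*-1}\norm{\T\wbar})^q$, so that the bound can be stated in terms of $\norm{\T\wbar}$ rather than $\norm{\pbar}$ as required.
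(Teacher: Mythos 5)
Your proof is correct and takes essentially the same route as the paper's — minimality of $\fal$, the subgradient inequality for $\T^*\pbar\in\partial\Rpen(\fdagger)$, Young's inequality, and the duality identity $\norm{\pbar}^{q^*}=\norm{\T\wbar}^q$; the paper merely estimates $\norm{\T\fal-\gdagger}^q$ directly at the outset (via $\norm{a+b}^q\le 2^{q-1}(\norm{a}^q+\norm{b}^q)$) instead of passing through $\norm{\T\fal-\gobs}$ and adding $\delta$ at the end. One small bookkeeping point: the identity you actually need is $\alpha^{q^*}\norm{\pbar}^{q^*}=(\alpha^{q^*-1}\norm{\T\wbar})^q$ (since $(q^*-1)q=q^*$), and the extra power of $\alpha$ relative to what you wrote is supplied when you clear the factor $1/(2q\alpha)$ from the left-hand side, so the final bound is unaffected.
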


\begin{proof}
	From \cite[Lemma 3.20]{Scherzer2008} we get 
	\begin{align*}
		\frac{1}{2^{q-1}q}\norm{\T\fal-\gdagger}^q \le \Sfun(\T\fal-\gobs)+\Sfun\paren{\gdagger-\gobs}.
	\end{align*}
	By the minimizing property of $\fal$ \eqref{eq:minimizing_property} we have
	\begin{align*}
\Sfun(\T\fal-\gobs)-\Sfun\paren{\gdagger-\gobs} &\le \alpha\left(\Rpen\paren{\fdagger}-\Rpen(\fal)\right) \\
	&=-\alpha\breg{\Rpen}\!\paren{\fal,\fdagger}-\alpha\lsp \T^*\pbar, \fal-\fdagger \rsp.
	\end{align*}
	Now using the non-negativity of the Bregman distance we have
	\begin{align*}
		\frac{1}{2^{q-1}q}\norm{\T\fal-\gdagger}^q
		&\le 2\Sfun\paren{\gdagger-\gobs}+\alpha \norm{ \pbar}\norm{ \T\fal-\gdagger} \\
		&\le \frac{2\delta^q}{q}+\frac{2^{-q}}{q}\norm{\T\fal-\gdagger}^q+\frac{(2\alpha)^{q^*}}{q^*}\norm{\pbar}^{q^*},
	\end{align*}
	where the last inequality follows from the $\norm{\gdagger-\gobs}\le \delta$ as well as from the generalized Young inequality. Therefore we have
	\begin{align*}
		\norm{\T\fal-\gdagger}^q \le 2^{q}q\left(\frac{2\delta^q}{q}+\frac{(2\alpha)^{q^*}}{q^*}\norm{\pbar}^{q^*}\right).
	\end{align*}
	The claim then follows from taking the $q$-th root and noticing that we have 
	$\norm{\pbar}^{q^*}=\norm{J_{q,\Y}(\T\wbar)}^{q^*}=\norm{\T\wbar}^q$ (see \eqref{eq:defi_duality_map}) 
	as well as $\alpha^{\frac{q^*}{q}}=\alpha^{q^*-1}$.
\qed\end{proof}

The main part of the proof of Theorem~\ref{rate_theorem} consists in the derivation of convergence rates 
for the dual problem:
\begin{lem} \label{dual_conv}
	Suppose that Assumption \ref{assump_RS} holds true and define $\alpha_q:=\alpha^{q^*-1}$, $\widetilde{\Phi}(s)=\Phi(s^{q/r})$.
		Moreover, let $\VSC^3(\fdagger,\Phi,\Rpen,\Sfun)$ hold true with constants $\beta,\mu$, and $\overline{t}$. 
		 If $\alpha$ is chosen such that 
		{\new $c^{-1}\delta\le \alpha_q\leq \overline{t}$, for some $c>0$,} then
		\begin{align*}
			\frac{1}{2\alpha}\breg{\Sfun^*}\paren{-\alpha\pal,-\alpha\pbar}\le C\frac{\delta^q}{\alpha}
			+\alpha_q^{2}\left(-\widetilde{\Phi}\right)^*\left(\frac{-\tilde{C}\left(c+\norm{\T\wbar}\right)^{q-r}}{\alpha_q}\right)+\beta\alpha_q^{2\mu},
		\end{align*}
		where $C,\tilde{C}>0$ depend at most on $q$, $\conv$, $c$, $\cXR$, and $\csXR$.
\end{lem}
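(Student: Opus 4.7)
The plan is to combine dual optimality with a careful application of $\VSC^3$ and to conclude by a Fenchel--Legendre step. First, testing the minimizing property of $\pal$ in $(P_1^*)$ against $p=\pbar$ and rewriting both differences $\Sfun^*(-\alpha\pal)-\Sfun^*(-\alpha\pbar)$ and $\Rpen^*(\T^*\pal)-\Rpen^*(\T^*\pbar)$ as Bregman distances plus subgradient linear terms (using that $\T\wbar = J_{q^*,\Y^*}(\pbar)\in\partial\Sfun^*(\pbar)$ together with the $q^*$-homogeneity of $\Sfun^*$ yields $\partial\Sfun^*(-\alpha\pbar)=\{-\alpha_q\T\wbar\}$, and that $\fdagger\in\partial\Rpen^*(\T^*\pbar)$), and simplifying via $\lsp\fdagger,\T^*(\pal-\pbar)\rsp = \lsp\gdagger,\pal-\pbar\rsp$, I would arrive at the basic inequality
\begin{equation*}
\tfrac{1}{\alpha}\breg{\Sfun^*}\!\paren{-\alpha\pal,-\alpha\pbar} + \breg{\Rpen^*}\!\paren{\T^*\pal,\T^*\pbar} \le \lsp\pal-\pbar,\gobs-\gdagger\rsp + \alpha_q\lsp\wbar,\T^*\pbar - \T^*\pal\rsp.
\end{equation*}
The noise inner product is bounded by $\delta\|\pal-\pbar\|$, and the generalized Young inequality combined with Xu--Roach (Lemma~\ref{xu-roach}) on the $q^*$-convex space $\Y^*$ produces the desired $\delta^q/\alpha$ contribution after absorbing a small multiple of $\breg{\Sfun^*}(-\alpha\pal,-\alpha\pbar)/\alpha$ back into the left-hand side.

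Next, to control the critical cross term $\alpha_q\lsp\wbar,\T^*\pbar-\T^*\pal\rsp$, I would invoke $\VSC^3$ with $t=\alpha_q$ (admissible since $\alpha_q\le\overline t$) at the test element $\f=\fal$. Dropping the nonnegative subgradient-monotonicity term $\lsp\f_t^*-\T^*\pal,\fdagger-t\wbar-\fal\rsp\ge 0$ and expanding
\begin{equation*}
\lsp\T^*\pal-\T^*\pbar,\fdagger-t\wbar-\fal\rsp = -\symbreg{\Rpen}(\fal,\fdagger) + t\lsp\T^*\pbar-\T^*\pal,\wbar\rsp,
\end{equation*}
together with the decomposition $\symbreg{\Rpen}(\fal,\fdagger) = \breg{\Rpen}(\fal,\fdagger) + \breg{\Rpen^*}(\T^*\pal,\T^*\pbar)$ (via \eqref{eq:bregman_identity}), yields a bound on $\alpha_q\lsp\wbar,\T^*\pbar-\T^*\pal\rsp$ in which the $\breg{\Rpen^*}(\T^*\pal,\T^*\pbar)$-contribution cancels exactly against the matching term on the left of the basic inequality. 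The residual primal Bregman distances $\breg{\Rpen}(\fal,\fdagger)$ and $\breg{\Rpen}(\fal,\fdagger-\alpha_q\wbar)$ would then be estimated either by the Grasmair-type second-order bound \eqref{eq:Grasmairs_rates} (noting that $\VSC^3$ implies a $\VSC^2$ up to an $\alpha_q^2$ correction) or by pivoting through $\breg{\Rpen}(\fdagger,\fdagger-\alpha_q\wbar) = \mathcal O(\alpha_q^2)$, the latter being immediate from $\VSC^3$ with $\f=\fdagger$ (which after an application of subgradient monotonicity controls the symmetric Bregman distance $\symbreg{\Rpen}(\fdagger,\fdagger-\alpha_q\wbar)$).

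Finally, the remaining term $\alpha_q^2\Phi\bigl(\alpha_q^{-q}\|\T\fal-\gdagger+\alpha_q\T\wbar\|^q\bigr)$ is handled by using Lemma~\ref{img_conv} to obtain $\|\T\fal-\gdagger\|\le C(\delta+\alpha_q\|\T\wbar\|)$, so that $\|\T\fal-\gdagger+\alpha_q\T\wbar\|\lesssim\alpha_q(c+\|\T\wbar\|)$ up to a correction that, via Xu--Roach applied in the $\conv$-convex dual pair, reduces to an absorbable multiple of $\breg{\Sfun^*}(-\alpha\pal,-\alpha\pbar)$. The Fenchel--Legendre step then consists in applying the Young-type inequality $-\tau\sigma+\Phi(\tau)\le(-\Phi)^*(-\sigma)$ after rescaling, where the exponent mismatch between the $q$-power inside $\Phi$ and the $\conv$-power delivered by Xu--Roach is reconciled precisely by the substitution $\widetilde\Phi(s) = \Phi(s^{q/\conv})$, producing the announced $\alpha_q^2(-\widetilde\Phi)^*(\cdot/\alpha_q)$ contribution with prefactor $\tilde C(c+\|\T\wbar\|)^{q-\conv}$. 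The $\beta\alpha_q^{2\mu}$ slack is inherited directly from $\VSC^3$. The main technical obstacle will be this exponent reconciliation together with the careful bookkeeping of the Xu--Roach constants $\cXR$ and $\csXR$ when translating between primal and dual quantities, and ensuring that all the absorbed Bregman distances on the right-hand side carry coefficients strictly less than those on the left.
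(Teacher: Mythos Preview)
Your opening inequality, obtained by testing $(P_1^*)$ at $p=\pbar$, is correct, and so is your invocation of $\VSC^3$ at $\f=\fal$, $t=\alpha_q$. The decisive gap is in the treatment of the residual primal Bregman distances $\breg{\Rpen}(\fal,\fdagger)$ and $\breg{\Rpen}(\fal,\fdagger-\alpha_q\wbar)$ that remain after the cancellation of $\breg{\Rpen^*}(\T^*\pal,\T^*\pbar)$. Neither of your two suggestions disposes of them at the required order. First, there is no triangle inequality for Bregman distances, so ``pivoting through $\breg{\Rpen}(\fdagger,\fdagger-\alpha_q\wbar)=\mathcal O(\alpha_q^2)$'' does not bound $\breg{\Rpen}(\fal,\fdagger-\alpha_q\wbar)$. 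Second, the only $\VSC^2$ that is guaranteed by the hypotheses of $\VSC^3$ is the limiting one coming from $\T\wbar\in\partial\Sfun^*(\pbar)$, and \eqref{eq:Grasmairs_rates} then gives at best $\breg{\Rpen}(\fal,\fdagger)=\mathcal O(\delta^q/\alpha+\alpha_q^{2})$. An additive $\alpha_q^{2}$ contribution on the right is fatal here: since $(-\widetilde\Phi)^*\bigl(-\tilde C(c+\|\T\wbar\|)^{q-r}/\alpha_q\bigr)\to 0$ as $\alpha_q\to 0$, the term $\alpha_q^{2}$ dominates $\alpha_q^{2}(-\widetilde\Phi)^*(\cdot)$ and you recover only the second-order rate, not the claimed bound.

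The paper circumvents this by never producing such a residual. Instead of the minimizing inequality for $\pal$, it starts from the \emph{identity}
\[
\tfrac{1}{\alpha}\symbreg{\Sfun^*}(-\alpha\pal,-\alpha\pbar)
=\lsp \T^*\pal-\T^*\pbar,\fdagger-\alpha_q\wbar-\fal\rsp + \lsp\pal-\pbar,\gobs-\gdagger\rsp,
\]
obtained directly from the two extremal relations. After inserting $f_{\alpha_q}^*$ and applying $\VSC^3$, the term $\breg{\Rpen}(\fal,\fdagger-\alpha_q\wbar)$ from the source condition is exactly absorbed by $-\symbreg{\Rpen}(\fal,\fdagger-\alpha_q\wbar)$, leaving no primal Bregman distance at all. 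The negative term needed for the Fenchel--Legendre step is then manufactured not by moving a fraction of the left-hand side across, but by subtracting the ``other half'' $\tfrac{1}{\alpha}\breg{\Sfun^*}(-\alpha\pbar,-\alpha\pal)=\tfrac{1}{\alpha}\breg{\Sfun}(\T\fal-\gobs,-\alpha_q\T\wbar)$ and lower-bounding it via Xu--Roach in the $r$-convex space $\Y$. Lemma~\ref{img_conv} enters only to control the $\max$-factor in that lower bound, not to estimate the argument of $\Phi$ directly. In short, your dual-minimizer route keeps only the $\breg{\Rpen^*}$ half of $\symbreg{\Rpen}(\fal,\fdagger)$; the missing half $\breg{\Rpen}(\fal,\fdagger)$ is exactly the obstruction, and the paper's use of the symmetric identity is what makes it disappear.
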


\begin{proof}
It follows from \eqref{homogeneity} and $\T\wbar\in\partial\Sfun^*(\pbar)$ that 
$-\alpha_q\T\wbar\in\partial\Sfun^*(-\alpha\pbar)$. Together with \eqref{IA} we obtain 
	\begin{align*}
		\frac{1}{\alpha}\symbreg{\Sfun^*}\paren{-\alpha\pal,-\alpha\pbar}&=\frac{1}{\alpha}\lsp -\alpha\pbar- (-\alpha\pal), -\alpha_q\T\wbar- (\T\fal-\gobs)\rsp \\
		=&\lsp \T^*\pal-\T^*\pbar, \fdagger-\alpha_q\wbar-\fal\rsp+\lsp\pal-\pbar,\gobs-\gdagger\rsp.
	\end{align*}
	The second term $E:=\lsp\pal-\pbar,\gobs-\gdagger\rsp$ will be estimated later. 
	Artificially adding zero in the form $\f_{\alpha_q}^*-f_{\alpha_q}^*$ with $\f_{\alpha_q}^*\in \partial\Rpen(\fdagger-\alpha_q\wbar)$, we find
	\begin{align*} \begin{split}
		\frac{1}{\alpha}\symbreg{\Sfun^*}\paren{-\alpha\pal,-\alpha\pbar}&= 
		\lsp f_{\alpha_q}^*-\T^*\pbar, \fdagger-\alpha_q\wbar-\fal\rsp +E \\
		&\quad +\lsp \T^*\pal-f_{\alpha_q}^*, \fdagger-\alpha_q\wbar-\fal\rsp.
		\end{split}
	\end{align*}
In view of \eqref{IA} the last term is the negative symmetric Bregman distance $-\symbreg{\Rpen}(\fal, \fdagger-\alpha_q\wbar)$. The first term can be bounded using $\VSC^3(\fdagger,\Phi,\Rpen,\Sfun)$ 
	by choosing $\f=\fal$ and $t=\alpha_q$: 
	\begin{align*}
	\begin{split}
		\frac{1}{\alpha}\symbreg{\Sfun^*}\paren{-\alpha\pal,-\alpha\pbar}&\le  \alpha_q^{2}\widetilde{\Phi}\left(\alpha_q^{-\conv}\norm{\T\fal-\gdagger+\alpha_q\T\wbar}^r\right)+\beta\alpha_q^{2\mu}\\
		&\quad+ \breg{\Rpen}\paren{\fal,\fdagger-\alpha_q\wbar} -\symbreg{\Rpen}(\fal, \fdagger-\alpha_q\wbar)+E \\
		&\le \alpha_q^2\widetilde{\Phi}\left(\alpha_q^{-\conv}\norm{\T\fal-\gdagger+\alpha_q\T\wbar}^r\right)+\beta \alpha_q^{2\mu} +E.
		\end{split}
	\end{align*}
	Now we use our joker. We subtract \[\frac{1}{\alpha}\breg{\Sfun^*}(-\alpha\pbar,-\alpha\pal)=\frac{1}{\alpha}\breg{\Sfun}(\T\fal-\gobs,-\alpha_q T\wbar)\] (see \eqref{eq:bregman_identity}) from both sides leading to
	\begin{align}\label{eq:timmy}
	\begin{split}
		\frac{1}{\alpha}\breg{\Sfun^*}\paren{-\alpha\pal,-\alpha\pbar}
		&\le \alpha_q^2 \widetilde{\Phi}\left(\alpha_q^{-\conv}\norm{\T\fal-\gdagger+\alpha_q\T\wbar}^r\right) \\
		& \quad-\breg{\Sfun}(\T\fal-\gobs,-\alpha_q T\wbar) +\beta \alpha_q^{2\mu} +E.
		\end{split}
	\end{align}
	So we need to bound $\Delta:=\breg{\Sfun}(\T\fal-\gobs,-\alpha_q T\wbar)$ from below. By Lemma \ref{xu-roach} we have (as $q\leq \conv$) 
	\begin{align*}
		\Delta 	\ge \cXR\max\paren{\norm{\alpha_q\T\wbar},\norm{\T\fal-\gobs+\alpha_q\T\wbar}}^{q-r}\norm{\T\fal-\gobs+\alpha_q\T\wbar}^r.
	\end{align*}
	Moreover, it follows from Lemma \ref{img_conv} and the choice {\new $\delta\le c\alpha_q$} that 
	\begin{align*}
		\norm{\T\fal-\gobs+\alpha_q\T\wbar} &\le \norm{\T\fal-\gdagger}+\norm{\gdagger-\gobs}+\norm{\alpha_q\T\wbar} \\
		&\le C_q\left(\delta +\alpha_q\norm{\T\wbar}\right)\le C_q\alpha_q\left(c+\norm{\T\wbar}\right).
	\end{align*}
Therefore, 
	\begin{align*}
		\max\paren{\norm{\alpha_q\T\wbar},\norm{\T\fal-\gobs+\alpha_q\T\wbar}} 
		&\le \alpha_q\max\paren{\norm{\T\wbar},C_q\paren{c+\norm{\T\wbar}}} \\
		&\le \alpha_q\max\paren{1,C_q}\paren{c+\norm{\T\wbar}}.
	\end{align*}
	Hence, there exists a constant $\tilde{C}>0$ depending on $q$, $\cXR$, and $\conv$ such that
	\begin{align*}
		\frac{1}{\alpha}\Delta \ge 2^{r-1}\tilde{C}\paren{c+\norm{\T\wbar}}^{q-r}\alpha^{(q^*-1)(q-r)-1}\norm{\T\fal-\gobs+\alpha_q\T\wbar}^r.
	\end{align*}
Note that $(q^*-1)(q-r)-1=-(r-1)(q^*-1)$. In order to replace $\gobs$ by $\gdagger$ on the right hand side 
we use the inequality
	\begin{align*}
2^{1-r}\norm{\T\fal-\gdagger+\alpha_q\T\wbar}^r-\norm{\T\fal-\gobs+\alpha_q\T\wbar}^r\le \norm{\gdagger-\gobs}^r
	\end{align*}
	(see \cite[Lemma 3.20]{Scherzer2008}) leading to
	\begin{align*}
		-\frac{1}{\alpha}\Delta
		&\le \frac{\tilde{C}\paren{c+\norm{\T\wbar}}^{q-r}}{\alpha_q^{r-1}}
		\paren{-\norm{\T\fal-\gdagger+\alpha_q\T\wbar}^r	+ 2^{r-1}\delta^r}.
	\end{align*}
	Inserting this into \eqref{eq:timmy} yields
		\begin{align*}
		\frac{1}{\alpha}\breg{\Sfun^*}\paren{-\alpha\pal,-\alpha\pbar}&\le \alpha_q^{2}\widetilde{\Phi}\left(\alpha_q^{-\conv}\norm{\T\fal-\gdagger+\alpha_q\T\wbar}^r\right) 
		 +\beta\alpha_q^{2\mu}+E \\
		& \quad -\frac{\tilde{C}\paren{c+\norm{\T\wbar}}^{q-r}}{\alpha_q^{r-1}}\paren{\norm{\T\fal-\gdagger+\alpha_q\T\wbar}^r -2^{r-1}\delta^r}
		  \notag  \\
		&\le \alpha_q^{2}\sup_{\tau\ge 0}\left[-\tilde{C}\paren{c+\norm{\T\wbar}}^{q-r}\alpha_q^{-1}\tau -\left(- \widetilde{\Phi}(\tau)\right)\right] \\
		& \quad +\tilde{C}\paren{c+\norm{\T\wbar}}^{q-r}2^{r-1}\frac{\delta^r}{\alpha_q^{r-1}}+E+\beta\alpha_q^{2\mu}. 
	\end{align*}
The supremum equals $\left(-\widetilde{\Phi}\right)^*\left(-\tilde{C}\left(c+\norm{\T\wbar}\right)^{q-r}\alpha_q^{-1}\right)$, by the definition of the convex conjugate . 
	To deal with $E$ we use the generalized Young inequality
	\begin{multline*}
		\frac{1}{\alpha}\lsp\paren{\frac{\csXR q^*}{2}}^{\frac{1}{q^*}}(\alpha\pal-\alpha\pbar),\paren{\frac{\csXR q^*}{2}}^{\frac{-1}{q^*}}(\gobs-\gdagger)\rsp \\
		\le \frac{\csXR}{2\alpha}\norm{\alpha\pal-\alpha\pbar}^{q^*}		+\frac{1}{q}\paren{\frac{\csXR q^*}{2}}^{-\frac{q}{q^*}}\frac{\delta^q}{\alpha}
	\end{multline*}
	  and apply Lemma \ref{xu-roach}, using that $\Y^*$ is $q^*$ convex, to find 
	\begin{align*}
E &=		\lsp\pal-\pbar,\gobs-\gdagger\rsp\le \frac{1}{2\alpha}\breg{\Sfun^*}\paren{-\alpha\pal,-\alpha\pbar}+\frac{1}{q}\paren{\frac{\csXR q^*}{2}}^{-\frac{q}{q^*}}\frac{\delta^q}{\alpha}.
	\end{align*}
The assumption $\delta\le\alpha_q$, or equivalently $\delta^{r-q}\le\alpha_q^{r-q}$, implies 
	$\frac{\delta^r}{\alpha_q^{r-1}}\le\frac{\delta^q}{\alpha_q^{q-1}}=\frac{\delta^q}{\alpha}$.
Further $\paren{c+\norm{\T\wbar}}^{q-r}\le c^{q-r}$, hence there exists a constant $C>0$ depending on $q$,  $r$, $c$, $\cXR$, and $\csXR$ such that
	\begin{align*}
		\frac{1}{2\alpha}\breg{\Sfun^*}\paren{-\alpha\pal,-\alpha\pbar}&\le C\frac{\delta^q}{\alpha}+ 
		\alpha_q^{2}\left(-\widetilde{\Phi}\right)^*\left(-\tilde{C}\left(c+\norm{\T\wbar}\right)^{q-r}\alpha_q^{-1}\right) 
		+ \beta\alpha_q^{2\mu},
	\end{align*}
	 which completes the proof.
\qed\end{proof}

Now Theorem \ref{rate_theorem} is an immediate consequence of Lemma \ref{starting_lemma} and Lemma \ref{dual_conv}. 


\section{Verification of higher order variational source conditions}\label{sec:verification}
In this section we provide some examples how higher order VSCs can 
be verified for specific inverse problems.
 
\subsection{Hilbert spaces}\label{sec:verificationHilbert}
In the following we introduce spaces $\HilbertX_{\kappa}$ which are defined by 
conditions due to Neubauer \cite{Neubauer1997} and describe necessary and sufficient 
conditions for rates of convergence of spectral regularization methods. 
Let $E^{T^*T}_{\lambda}  := 1_{[0,\lambda)}(T^*T)$, $\lambda\geq 0$, denote 
the spectral projections for the operator $T^*T$ with the characteristic 
function $1_{[0,\lambda)}$ of the interval $[0,\lambda)$. For an index function 
$\kappa$ we define 
\begin{equation}\label{eq:decaySpace}
	\begin{aligned}
		\HilbertX^T_{\kappa}:=& \left\{\f \in\HilbertX \colon \norm{\f}_{\HilbertX_{\kappa}^T} < \infty
		\right\},
		\qquad 
		\norm{\f}_{\HilbertX_{\kappa}^T}:= \sup_{\lambda>0} \frac{1}{\kappa(\lambda)} 
		\norm{E^{T^*T}_{\lambda} \f}_{\HilbertX}.
	\end{aligned}
\end{equation}
The function $\kappa$ corresponds to the function in spectral source conditions 
$\fdagger\in \mathrm{ran}(\kappa(\T^*\T))$. The corresponding convergence rate 
function is 
\[
\Phi_{\kappa}(t):=\kappa\paren{\Theta_{\kappa}^{-1}(\sqrt{t})}^2,\qquad 
\Theta_{\kappa}(\lambda):= \sqrt{\lambda}\kappa(\lambda).
\]

In particular, $\Phi_{\id^{\nu/2}} = \id^{\nu/(\nu+1)}$. 
The following theorem is a generalization of \cite[Thm. 3.1]{HW:17} from the 
special case $l=1$ to general $l\in\N$:

\begin{thm}\label{thm:interp_vscl}
Let  $\kappa$ be an index function such that  $t\mapsto\kappa(t)^2/t^{1-\mu}$ 
is decreasing for some $\mu\in(0,1)$, $\kappa\cdot\kappa$	is concave, 
and $\kappa$ is decaying sufficiently rapidly such that 
\begin{equation}\label{eq:kappa_decay}
C_{\kappa}:=\sup_{0<\lambda\leq \|T^*T\|}
\frac{\sum_{k=0}^\infty\kappa(2^{-k}\lambda)^2}{\kappa(\lambda)^2} <\infty.
\end{equation}
Moreover, let $l\in\N$ and define $\tilde{\kappa}(t)=\kappa(t)t^{l/2}$. 
Then 
\begin{align}\label{eq:interp_vscl}
\fdagger\in \HilbertX_{\tilde{\kappa}}^{\T}
\quad \Leftrightarrow \quad 
\exists A>0\,:\,\VSC^{l\new +1}(\fdagger,A\Phi_{\kappa}).
\end{align}
\end{thm}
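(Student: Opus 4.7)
The plan is to prove both implications by reducing to the classical case from \cite[Thm.~3.1]{HW:17} via spectral calculus, splitting the argument by the parity of $l+1$. Throughout, $n$ is fixed by $l+1=2n-1$ or $l+1=2n$.

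For the implication $\fdagger\in\HilbertX^T_{\tilde{\kappa}}\Rightarrow\VSC^{l+1}$ in the even case $l=2n-2$, I would define $\itwbar{n-1}:=(\T^*\T)^{-l/2}\fdagger$ by spectral calculus (extended by zero on $\ker\T$), so that $\fdagger=(\T^*\T)^{n-1}\itwbar{n-1}$ automatically. The hypothesis $\|E_\lambda^{\T^*\T}\fdagger\|\le C\kappa(\lambda)\lambda^{l/2}$, combined with a dyadic decomposition of $[0,\|\T^*\T\|]$, yields
\begin{equation*}
\|E_\lambda^{\T^*\T}\itwbar{n-1}\|^2 = \int_0^\lambda \mu^{-l}\,d\|E_\mu^{\T^*\T}\fdagger\|^2 \le C^2\, 2^l\sum_{k\ge 0}\kappa(2^{-k}\lambda)^2 \le C^2\, 2^l\, C_\kappa\, \kappa(\lambda)^2,
\end{equation*}
so $\itwbar{n-1}\in\HilbertX^T_\kappa$, and then the classical $l=1$ case of \cite[Thm.~3.1]{HW:17} produces the classical variational inequality for $\itwbar{n-1}$ with index function $A\Phi_\kappa$, which is exactly what \eqref{VSC^2n-1} demands. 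In the odd case $l=2n-1$, I would pass to the image side via the singular value decomposition of $\T$: define $\itpbar{n}\in\HilbertY$ by prescribing its coefficients in the $\T\T^*$-basis so that $\fdagger=(\T^*\T)^{n-1}\T^*\itpbar{n}$, verify $\itpbar{n}\in\HilbertY^{\T^*}_\kappa$ through the completely analogous dyadic estimate on the spectral resolution of $\T\T^*$, and apply the classical VSC to $\itpbar{n}$ to obtain \eqref{VSC^2n}.

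For the converse $\VSC^{l+1}\Rightarrow\fdagger\in\HilbertX^T_{\tilde{\kappa}}$, the source representation of $\fdagger$ is given by hypothesis. The accompanying variational inequality on $\itwbar{n-1}$ or $\itpbar{n}$, fed into the $\Leftarrow$ direction of the $l=1$ result, upgrades to $\itwbar{n-1}\in\HilbertX^T_\kappa$ (respectively $\itpbar{n}\in\HilbertY^{\T^*}_\kappa$). In the even case this transfers to $\fdagger$ via
\begin{equation*}
\|E_\lambda^{\T^*\T}\fdagger\|^2 = \int_0^\lambda \mu^l\,d\|E_\mu^{\T^*\T}\itwbar{n-1}\|^2 \le \lambda^l\,\|E_\lambda^{\T^*\T}\itwbar{n-1}\|^2 \le C'\,\tilde{\kappa}(\lambda)^2,
\end{equation*}
and the odd case is analogous using the intertwining $E_\lambda^{\T^*\T}\T^*=\T^*E_\lambda^{\T\T^*}$ on $\range\T^*$.

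The main technical obstacle will be the odd case, where one must pass cleanly between the spectral resolutions of $\T^*\T$ and $\T\T^*$, and in particular show that the dyadic sum condition \eqref{eq:kappa_decay} controls precisely the integral $\int_0^\lambda \mu^{-l}\,d\|E_\mu^{\T^*\T}\fdagger\|^2$ that arises when one divides by the finitely-smoothing factor $(\T^*\T)^{l/2}$. The structural assumptions on $\kappa$ (concavity of $\kappa\cdot\kappa$ and monotonicity of $\kappa(t)^2/t^{1-\mu}$) do not enter the reduction itself; they are inherited from \cite[Thm.~3.1]{HW:17} to guarantee that $\Phi_\kappa$ behaves as a concave index function in the classical VSC argument.
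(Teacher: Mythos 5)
Your proposal is correct and follows essentially the same route as the paper: both reduce $\VSC^{l+1}$ to the classical case of \cite[Thm.~3.1]{HW:17} by establishing, via the dyadic decomposition controlled by \eqref{eq:kappa_decay}, the equivalence $\fdagger\in\HilbertX^T_{\tilde{\kappa}}\Leftrightarrow (\T^*\T)^{-l/2}\fdagger\in\HilbertX^T_{\kappa}$, and then transfer to the image side for odd $l$. The only cosmetic difference is that the paper uses the polar decomposition $\T=U(\T^*\T)^{1/2}$ with a partial isometry $U$ (rather than an SVD, which would require compactness) to pass between the spectral resolutions of $\T^*\T$ and $\T\T^*$, which is exactly the intertwining you anticipate needing.
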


Note that condition \eqref{eq:kappa_decay} holds true for all power functions 
$\kappa(t) = t^{\nu}$ with $\nu>0$, but not for logarithmic functions 
$\kappa(t) = (-\ln t)^{-p}$ with $p>0$. The first two conditions on the other 
hand imply that $\kappa$ must not decay to $0$ too rapidly. They are both 
satisfied for power functions $\kappa(t) = t^{\nu/2}$ if and only if $\nu\in (0,1)$. 
We point out that for the case $l=1$ the condition \eqref{eq:kappa_decay} is not 
required. 

\begin{proof}
We first show for all $l\in\N$ that
\begin{align} 	 \label{evenn}
	 	\fdagger\in\HilbertX_{\tilde{\kappa}}^T 
		&\qquad \Leftrightarrow \qquad 
		\exists \, \itwbar{\frac{l}{2}}\in\HilbertX_{\kappa}^{T}\,:\, 
		\fdagger=(T^*T)^\frac{l}{2}\itwbar{\frac{l}{2}}
		\end{align}
which together with the special case $l={\new 0}$ from \cite[Thm 3.1]{HW:17} 
already implies \eqref{eq:interp_vscl} for even $l$:
	\begin{enumerate}[label=(\roman*)]
		\item Assume there exists $\itwbar{\frac{l}{2}}\in\HilbertX_{\kappa}^{T}$  
		such that $\fdagger=(T^*T)^\frac{l}{2}\itwbar{\frac{l}{2}}$. Define $\int_0^{\lambda+}:=\lim_{\varepsilon\searrow 0}\int_0^{\lambda+\varepsilon}$. Then,  
	\begin{align*}
			\norm{\fdagger}_{\HilbertX_{\tilde{\kappa}}^T}^2
			&=\sup_{\lambda>0}\frac{1}{\tilde{\kappa}(\lambda)^2}\norm{E_\lambda^{T^*T}(T^*T)^\frac{l}{2}\itwbar{\frac{l}{2}}}^2 \\
			&=\sup_{\lambda>0}\frac{1}{\tilde{\kappa}(\lambda)^2} \int_0^{\lambda+} \tilde{\lambda}^l \diff \norm{E_{\tilde{\lambda}}\itwbar{\frac{l}{2}}}^2 \\
			&\le \sup_{\lambda>0}\frac{1}{\tilde{\kappa}(\lambda)^2} 
			\int_0^{\lambda+} \lambda^l \diff \norm{E_{\tilde{\lambda}}\itwbar{\frac{l}{2}}}^2 \\
			&=\sup_{\lambda>0}\frac{1}{\kappa(\lambda)^2} 
	\int_0^{\lambda+} \diff \norm{E_{\tilde{\lambda}}\itwbar{\frac{l}{2}}}^2 
			=\norm{\itwbar{\frac{l}{2}}}_{\HilbertX_{\kappa}^{T}}^2<\infty.
		\end{align*}
		\item Now assume that $\fdagger\in\HilbertX_{\tilde{\kappa}}^T$. 
		It follows that 
\allowdisplaybreaks
		\begin{align*}
		\frac{1}{\kappa(\lambda)^2} \int_0^{\lambda+} \tilde{\lambda}^{-l} \diff \norm{E_{\tilde{\lambda}}\fdagger}^2
			&=\frac{1}{\kappa(\lambda)^2} \sum_{k=0}^\infty 
			\int_{2^{-k-1}\lambda}^{2^{-k}\lambda+} \tilde{\lambda}^{-l} \diff \norm{E_{\tilde{\lambda}}\fdagger}^2 \\
			&\le \frac{1}{\kappa(\lambda)^2} \sum_{k=0}^\infty 
			\int_{2^{-k-1}\lambda}^{2^{-k}\lambda+} (2^{-k-1}\lambda)^{-l} \diff \norm{E_{\tilde{\lambda}}\fdagger}^2\\
			&\le\frac{1}{\kappa(\lambda)^2} \sum_{k=0}^\infty \frac{\kappa(2^{-k}\lambda)^2}{\kappa(2^{-k}\lambda)^2(2^{-k-1}\lambda)^{\new l}}
			\!\int_0^{2^{-k}\lambda+}  \!\!\!\!\!\!\!\diff \norm{E_{\tilde{\lambda}}\fdagger}^2 \\
			&{\newnew =\lim_{\varepsilon\searrow 0}\frac{2^l}{\kappa(\lambda)^2} \sum_{k=0}^\infty \frac{\kappa(2^{-k}\lambda)^2}{\tilde{\kappa}(2^{-k}\lambda)^2}
			 \norm{E_{2^{-k}\lambda+\varepsilon}\fdagger}^2} \\
			&\le \frac{2^l}{\kappa(\lambda)^2}\sum_{k=0}^\infty \kappa(2^{-k}\lambda)^2 \norm{\fdagger}_{\HilbertX_{\tilde{\kappa}}^T}^2 
			\leq 2^lC_{\kappa}\norm{\fdagger}_{\HilbertX_{\tilde{\kappa}}^T}^2.
		\end{align*}
	This shows that 
	{\new $\itwbar{l/2}:=  \int_0^{\infty} \tilde{\lambda}^{-l/2} \diff E_{\tilde{\lambda}}\fdagger$ is well defined.}  	Moreover, we have 
	$\itwbar{l/2}{=\new (T^*T)^{-(l/2)}}\fdagger$ and $\norm{\itwbar{l/2}}_{\HilbertX_{\kappa}^T} <\infty$. 
	\end{enumerate}
To prove the theorem in the case of odd $l$ we use the polar decomposition 
$T= U(T^*T)^{1/2}$ with a partial isometry $U$ satisfying $N(U)=N(T)$ 
and set $\itpbar{\frac{l+1}{2}}:=U\itwbar{\new \frac{l}{2}}$. 
As $U:\HilbertX_{\kappa}^{T}\to\HilbertY_{\kappa}^{T^*}$ is an isometry, 
\eqref{evenn} implies
\begin{align}
		\label{oddn}
	 	\fdagger\in\HilbertX_{\tilde{\kappa}}^T 
		&\qquad \Leftrightarrow \qquad 
		\exists \, \itpbar{\frac{l+1}{2}}\in\HilbertY_{\kappa}^{T^*} \,:\,
		\fdagger=(T^*T)^\frac{l-1}{2}T^*\itpbar{\frac{l+1}{2}}.
	 \end{align}
Applying \eqref{eq:interp_vscl} for $l={\new 0}$ from \cite[Thm. 3.1]{HW:17} 
to $\HilbertY$ and $\T\T^*$ yields \eqref{eq:interp_vscl} for the case of odd $l$. 
\qed\end{proof}

{\new The equivalence \eqref{eq:interp_vscl} together with the equivalence in \cite[Prop. 4.1]{Albani2016} 
also shows that in Hilbert spaces higher order variational source conditions are equivalent 
to certain symmetrized multiplicative variational source conditions.} 
We have already seen at the end of \S~\ref{sec:Hilbert_spaces} that 
$\VSC^l(\fdagger,A\id^{\nu/(\nu+1)})$ implies the order optimal convergence rate 
$\norm{\itfal{m}-\fdagger}= \landauO{\delta^{(l-1+\nu)/(l+\nu)}}$ for an optimal 
choice of $\alpha$ and $m\geq l/2$. 
It follows from \cite{HW:17} and Theorem \ref{thm:interp_vscl} 
that $\VSC^l\left(\fdagger,A\id^{\nu/(\nu+1)}\right)$, with $\nu\in (0,1)$ is not only a sufficient condition 
for this rate of convergence, but in contrast to spectral H\"older source 
conditions also a necessary condition: 

\begin{cor}
Let $l\in\N$, $m\geq l/2$, and $\nu\in (0,1)$. Moreover, let $\fdagger\neq 0$ 
and let $\itfal{m}=\itfal{m}(\gobs)$ denote the $m$-times iterated Tikhonov 
estimator. Then the following statements are equivalent:
\begin{enumerate}
\item 
\[\exists A>0\,:\,\VSC^l\paren{\fdagger,A\id^{\nu/(\nu+1)}}\]
\item
\[\exists C>0\;\forall \delta>0\;:\,
\sup_{\delta>0}\inf_{\alpha>0}\sup_{\|\gobs-\T\fdagger\|\leq \delta}
\norm{\itfal{m}(\gobs)-\fdagger}\leq  C\delta^{(l-1+\nu)/(l+\nu)}\]
\end{enumerate}
\end{cor}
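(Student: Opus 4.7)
The plan is to prove both directions via the two main results already at our disposal: Theorem \ref{Hilbert_theorem} for (1)$\Rightarrow$(2) and Theorem \ref{thm:interp_vscl} together with a classical converse for iterated Tikhonov regularization for (2)$\Rightarrow$(1). Throughout, fix $\kappa(t) := t^{\nu/2}$ and $\tilde\kappa(t) := t^{(l-1+\nu)/2}$, so that in the notation of Theorem \ref{thm:interp_vscl} we have $\tilde\kappa(t) = \kappa(t)\,t^{(l-1)/2}$ and $\Phi_\kappa(t) = t^{\nu/(\nu+1)}$; the hypotheses of Theorem \ref{thm:interp_vscl} for this $\kappa$ are straightforward to check (concavity of $\kappa\cdot\kappa(t)=t^\nu$ follows from $\nu\in(0,1)$, the decay condition $\kappa^2(t)/t^{1-\mu}=t^{\nu-1+\mu}$ decreasing holds for any $\mu\in(0,1-\nu)$, and $\sum_{k\geq 0}\kappa(2^{-k}\lambda)^2/\kappa(\lambda)^2 = 1/(1-2^{-\nu})<\infty$).

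\textbf{Direction (1)$\Rightarrow$(2).} I would substitute $\Phi(t) = A\,t^{\nu/(\nu+1)}$ into Theorem \ref{Hilbert_theorem}. A short calculation of the Fenchel-type functional yields $\psi(-1/\alpha) = \sup_{t\geq 0}[-t/\alpha + A\,t^{\nu/(\nu+1)}] = C_{A,\nu}\,\alpha^{\nu}$, so \eqref{l_estimate} becomes
\[
\norm{\itfal{m}-\fdagger}^2 \le C\paren{\tfrac{\delta^2}{\alpha} + \alpha^{l-1+\nu}}.
\]
Choosing $\alpha \sim \delta^{2/(l+\nu)}$ balances both terms and produces the rate $\delta^{2(l-1+\nu)/(l+\nu)}$ in the squared error, i.e.\ the rate asserted in (2).

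\textbf{Direction (2)$\Rightarrow$(1).} The plan is to reduce it, via Theorem \ref{thm:interp_vscl}, to the spectral smoothness statement $\fdagger\in \HilbertX^T_{\tilde\kappa}$. Concretely, I would invoke the converse result for iterated Tikhonov regularization in Hilbert spaces from \cite{HW:17}, which asserts that if the worst-case reconstruction error of the $m$-times iterated Tikhonov estimator decays at rate $\delta^{\mu/(\mu+1)}$ with $\mu \in (0, 2m)$ strictly below saturation, then $\fdagger$ necessarily belongs to the Neubauer-type space $\HilbertX^T_{\id^{\mu/2}}$. We apply this with $\mu = l-1+\nu$; since $m \geq l/2 > (l-1+\nu)/2$ (using $\nu<1$), we are strictly below the saturation threshold $2m$, and the hypothesis $\fdagger\neq 0$ excludes the degenerate case. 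This gives $\fdagger\in \HilbertX^T_{\tilde\kappa}$. Applying Theorem \ref{thm:interp_vscl} with the choices of $\kappa$ and the shift $l\mapsto l-1$ then yields $\VSC^l(\fdagger, A\Phi_\kappa) = \VSC^l(\fdagger, A\,\id^{\nu/(\nu+1)})$ for some $A>0$, which is (1).

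\textbf{Main obstacle.} The delicate step is the converse step for iterated Tikhonov: the assumption (2) is a worst-case statement over all admissible noisy data and an \emph{optimally chosen} parameter $\alpha=\alpha(\delta)$, and one must extract from this uniform rate a pointwise spectral smoothness statement about $\fdagger$. This is exactly the content of the converse theory in \cite{HW:17} (leveraging spectral decompositions and the saturation index of iterated Tikhonov), so most of the work is packaged there. The restriction $\nu\in(0,1)$ is crucial: it both keeps us strictly below saturation and ensures that the hypotheses of Theorem \ref{thm:interp_vscl} are satisfied by $\kappa(t)=t^{\nu/2}$; at $\nu=1$ one leaves the range in which VSCs and spectral conditions are equivalent.
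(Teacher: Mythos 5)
Your proposal is correct and follows essentially the same route as the paper: the direction (1)$\Rightarrow$(2) is exactly the computation given after Theorem \ref{Hilbert_theorem} (with $\psi(-1/\alpha)\sim\alpha^\nu$ and $\alpha\sim\delta^{2/(l+\nu)}$), and the direction (2)$\Rightarrow$(1) is obtained, as in the paper, by combining the converse results for spectral regularization methods from \cite{HW:17} (which apply since $m\ge l/2$ keeps $l-1+\nu<2m$ below saturation) with the equivalence of Theorem \ref{thm:interp_vscl} applied to $\kappa=\id^{\nu/2}$, whose hypotheses you correctly verify for $\nu\in(0,1)$.
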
 

For operators which are $a$-times smoothing in the sense specified below, 
higher order variational  source conditions can be characterized in terms 
of Besov spaces in analogy to first order variational source conditions 
(see \cite{HW:17}): 

\begin{cor}\label{cor:besov_char}
Assume that $\manifold$ is a connected, smooth Riemannian manifold, which is 
complete, has injectivity radius $r>0$ and a bounded geometry (see \cite{triebel:92} 
for further discussions) and that 
$\T:H^s(\manifold)\to H^{s+a}(\manifold)$ is bounded and boundedly invertible 
for some $a>0$ and all $s\in\R$. 
Then for all $\fdagger\in L^2(\manifold)$, all $l\in\N$ and all $\nu\in (0,1)$ 
we have 
\begin{subequations}\label{eqs:besov_char}
\begin{align}
\label{eq:besov_char}
&\exists A>0\,:\,\VSC^l\paren{\fdagger,A\id^{\frac{\nu}{\nu+1}}} 
&&\Leftrightarrow\quad 
\fdagger \in B^{(l-1+\nu)a}_{2,\infty}(\manifold),\\
\label{eq:besov_char_integer}
&\exists A>0\,:\,\VSC^l\paren{\fdagger,A\sqrt{\cdot}} 
\hspace*{-12ex}
&& \Leftrightarrow\quad 
\fdagger \in B^{la}_{2,2}(\manifold) = H^{la}(\manifold).
\end{align}
\end{subequations}
\end{cor}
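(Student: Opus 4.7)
My plan is to derive the corollary from Theorem~\ref{thm:interp_vscl}, equation~\eqref{eq:ssc_equiv_vsc}, and the identification, carried out in \cite{HW:17} for first order conditions, of the spectral-decay spaces $\HilbertX^T_{\kappa}$ with Besov spaces on $\manifold$. The hypothesis that $T:H^s(\manifold)\to H^{s+a}(\manifold)$ is bounded and boundedly invertible for all $s$ passes, by duality, to $T^*$, so $T^*T$ is a positive self-adjoint elliptic operator of order $-2a$; in particular its fractional powers $(T^*T)^{\beta}$ will serve both as Sobolev-space isomorphisms and as spectral-shift operators between the $\HilbertX^T_{\kappa}$ spaces.

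For the equivalence~\eqref{eq:besov_char} with $\nu\in(0,1)$, I would set $\kappa(t):=t^{\nu/2}$ and first check the three hypotheses of Theorem~\ref{thm:interp_vscl}: $\kappa(t)^2/t^{1-\mu}=t^{\nu+\mu-1}$ is decreasing for any $\mu\in(0,1-\nu)$; $\kappa\cdot\kappa=\id^\nu$ is concave since $\nu<1$; and $C_\kappa=\sum_{k\ge 0}2^{-k\nu}<\infty$. Applying Theorem~\ref{thm:interp_vscl} with $l$ replaced by $l-1$ and noting that $\Phi_\kappa=\id^{\nu/(\nu+1)}$, I obtain
\[
\exists A>0\;:\;\VSC^l(\fdagger,A\id^{\nu/(\nu+1)})\;\Longleftrightarrow\;\fdagger\in\HilbertX^T_{\tilde\kappa},\qquad \tilde\kappa(t)=t^{(l-1+\nu)/2}.
\]
It then remains to show $\HilbertX^T_{t^{\beta/2}}=B^{\beta a}_{2,\infty}(\manifold)$ for the non-integer exponent $\beta:=l-1+\nu>0$. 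For $\beta\in(0,1)$ this identification is the Besov characterisation of \cite{HW:17}. For $\beta>1$ I would reduce to that case by the spectral shift: $\f\in\HilbertX^T_{t^{\beta/2}}$ is equivalent to $(T^*T)^{-k/2}\f\in\HilbertX^T_{t^{(\beta-k)/2}}$, where $k:=\lfloor\beta\rfloor$, and the ellipticity of $T^*T$ gives an isomorphism $(T^*T)^{-k/2}:B^{\beta a}_{2,\infty}(\manifold)\to B^{(\beta-k)a}_{2,\infty}(\manifold)$, so the identification propagates through the Besov scale.

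For the integer case \eqref{eq:besov_char_integer}, I would invoke \eqref{eq:ssc_equiv_vsc}, which reduces the existence of $A>0$ with $\VSC^l(\fdagger,A\sqrt{\cdot})$ to membership of $\fdagger$ in $\range((T^*T)^{l/2})$. Since $(T^*T)^{l/2}:H^s(\manifold)\to H^{s+la}(\manifold)$ is bounded and boundedly invertible (by spectral calculus for the positive elliptic operator $T^*T$ of order $-2a$), its range acting on $\X=L^2(\manifold)$ equals $H^{la}(\manifold)=B^{la}_{2,2}(\manifold)$. The main obstacle in this plan is the extension of the Besov identification of \cite{HW:17} from $\beta\in(0,1)$ to arbitrary non-integer $\beta>0$; all other ingredients are either immediate consequences of the previously proved Theorem~\ref{thm:interp_vscl} or standard mapping properties of elliptic operators on manifolds with bounded geometry.
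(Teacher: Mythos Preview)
Your proposal is correct and follows the route the paper intends: the corollary is stated without proof precisely because it is meant to follow from Theorem~\ref{thm:interp_vscl} (for the equivalence $\VSC^l\Leftrightarrow \HilbertX^T_{t^{(l-1+\nu)/2}}$), from~\eqref{eq:ssc_equiv_vsc} (for the limiting case $\Phi=A\sqrt{\cdot}$), and from the Besov characterisation of the spectral decay spaces in~\cite{HW:17}. Your verification of the three hypotheses on $\kappa(t)=t^{\nu/2}$ is accurate, as is the index shift $l\mapsto l-1$ when invoking Theorem~\ref{thm:interp_vscl}.

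Concerning the obstacle you flag: the spectral-shift reduction you propose, $f^\dagger\in\HilbertX^T_{t^{\beta/2}}\Leftrightarrow (T^*T)^{-(l-1)/2}f^\dagger\in\HilbertX^T_{t^{\nu/2}}$, is exactly the content of the equivalences~\eqref{evenn} and~\eqref{oddn} already established inside the proof of Theorem~\ref{thm:interp_vscl}, so it need not be reproved. What then remains is the Besov mapping property of $(T^*T)^{(l-1)/2}$ (or of $(T^*T)^{(l-2)/2}T^*$ via the polar decomposition when $l$ is even). This follows from the Sobolev isomorphism hypothesis by real interpolation, since $B^s_{2,\infty}=(H^{s_0},H^{s_1})_{\theta,\infty}$ for $s=(1-\theta)s_0+\theta s_1$, and integer compositions of $T$ and $T^*$ suffice so that no genuine fractional power of a pseudodifferential operator is needed. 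Alternatively, the identification $\HilbertX^T_{t^{\beta/2}}=B^{\beta a}_{2,\infty}$ in~\cite{HW:17} is a statement about spectral decay of the elliptic operator $T^*T$ and is not tied to the range $\beta\in(0,1)$; if it is stated there for general $\beta>0$, your extension step is unnecessary.
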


\subsection{Non-quadratic smooth penalty terms}


The results in this subsection do not require Hilbert spaces and are formulated in the more general setting of Assumption \ref{assump_RS}.  In \cite[Lemma 5.3]{Grasmair2013} it is stated that under a smoothness condition on $\Rpen$ the $\VSC^2(\fdagger,\Phi,\Rpen,\Sfun)$ follows from the benchmark condition $\T^*\pbar\in\partial\Rpen(\fdagger)$, $\T\wbar\in\partial\Sfun^*(\pbar)$. In the following proposition we generalize this result to rates corresponding to 
H\"older conditions with exponent $\nu\in(1,2)$ using the technique of \cite[Thm.~2.1]{HW:17}. 

\begin{prop}[verification of $\VSC^2(\fdagger,\Phi,\Rpen,\Sfun)$]\label{prop_vsc2_proof}
	Suppose that Assumption \ref{assump_RS} holds true. {\new Let $\widetilde{\X}$ be a Banach space continuously 
	embedded in $ \X$  and assume that
$\Rpen$ is continuously Fr\'{e}chet-differentiable in a neighborhood of $\fdagger\in \widetilde{\X}$ with respect to 
$\norm{\cdot}_{\widetilde{\X}}$ and {\newnew $\Rpen':\widetilde{\X}\to\widetilde{\X}^*$} is uniformly Lipschitz continuous with respect to 
$\norm{\cdot}_{\widetilde{\X}}$ in this neighborhood.
Further assume that there exists $\pbar\in\Y^*$ such that $ T^*\pbar\in \partial\Rpen(\fdagger)$, $\Rpen'[\fdagger]=(T^*\pbar)|_{\widetilde{\X}}$
and define $\pstar:=J_{q^*,\Y^*}(\pbar)\in\Y$.} Suppose that 
	there exists a family of operators $P_k\in L(\Y)$ indexed by 
	$k\in \mathbb{N}$ such that {\new $P_k\pstar\in T\widetilde{\X}$} for all $k\in\mathbb{N}$, 
and let
	\begin{align}\label{eq:proj_for_pstar}
	&\kappa_k:=\|(I-P_k)\pstar\|_{\Y},
	\qquad\qquad \sigma_k:=\max\left\{\norm{T^{-1}P_k\pstar}_{\widetilde{\X}},1\right\}.
	\end{align}
If $\lim_{k\to\infty}\kappa_k = 0$, then 
there exists $C>0$ such that $\VSC^2(\fdagger,\Phi,\Rpen,\Sfun)$ holds true with the index function 
	\begin{align}
	\Phi(\tau):= C \inf_{k\in \mathbb{N}}\left[\sigma_k\tau^{1/2} \label{eq:VSC^2_index_function}
        +  \kappa_k^{\q}\right].
	\end{align}
\end{prop}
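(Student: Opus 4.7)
The plan is to decompose $\pstar = P_k\pstar + (I-P_k)\pstar$ in the inner product $\langle \pbar - p, \pstar\rangle$ and estimate the two resulting terms separately, then take the infimum over $k$. The key observation is that the first piece lies in $T\widetilde{\X}$, so it can be ``pulled back'' to $\widetilde{\X}$ where the Lipschitz gradient of $\Rpen$ can be exploited; the second piece is small in norm by hypothesis on $\kappa_k$ and can be absorbed via Young's inequality.

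For the smooth part, set $h_k := T^{-1}P_k\pstar \in \widetilde{\X}$ with $\|h_k\|_{\widetilde{\X}} \le \sigma_k$. Then $\langle \pbar - p, P_k\pstar\rangle = \langle T^*\pbar - T^*p, h_k\rangle \le \sigma_k\,\|T^*\pbar - T^*p\|_{\widetilde{\X}^*}$. The main technical step, and the heart of the argument, will be to prove the ``strong convexity'' estimate
\[
\|T^*(\pbar - p)\|_{\widetilde{\X}^*} \le \sqrt{2L\,\breg{\Rpen^*}^{\fdagger}(T^*p, T^*\pbar)},
\]
where $L$ is the Lipschitz constant of $\Rpen'$. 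I will derive this from the quadratic upper bound $\Rpen(\fdagger+h) \le \Rpen(\fdagger) + \langle T^*\pbar, h\rangle + \tfrac{L}{2}\|h\|_{\widetilde{\X}}^2$ (valid for $h \in \widetilde{\X}$ small, using $\Rpen'[\fdagger]=(T^*\pbar)|_{\widetilde{\X}}$) by inserting it into $\Rpen^*(T^*p) \ge \langle T^*p, \fdagger + h\rangle - \Rpen(\fdagger + h)$ and combining with the Young equality $\Rpen^*(T^*\pbar) = \langle T^*\pbar, \fdagger\rangle - \Rpen(\fdagger)$; this yields $\langle T^*(p-\pbar), h\rangle \le \breg{\Rpen^*}^{\fdagger}(T^*p, T^*\pbar) + \tfrac{L}{2}\|h\|_{\widetilde{\X}}^2$ for all admissible $h$, and optimizing over $h = t\tilde h$ with $\|\tilde h\|_{\widetilde{\X}}=1$ and $t>0$ gives the claim (this is precisely the scaling trick of \cite[Thm.~2.1]{HW:17}; the restriction to small $h$ can be removed by a standard density/scaling argument). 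For the rough part, $|\langle \pbar - p, (I-P_k)\pstar\rangle| \le \|\pbar - p\|_{\Y^*}\,\kappa_k$. Since $\Y^*$ is $q^*$-convex, Lemma \ref{xu-roach} provides $c_{q^*,\Y^*}\|p-\pbar\|_{\Y^*}^{q^*} \le \breg{\Sfun^*}^{\pstar}(p,\pbar)$, and the generalized Young inequality with conjugate exponents $q^*, q$ and a weight chosen to absorb half of $\breg{\Sfun^*}^{\pstar}(p,\pbar)$ on the right gives $\|\pbar - p\|_{\Y^*}\kappa_k \le \tfrac{1}{2}\breg{\Sfun^*}^{\pstar}(p,\pbar) + C\kappa_k^{q}$.

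Adding the two bounds yields
\[
\langle \pbar - p, \pstar\rangle \le \tfrac{1}{2}\breg{\Sfun^*}^{\pstar}(p,\pbar) + \sqrt{2L}\,\sigma_k\,\tau^{1/2} + C\kappa_k^{q},
\qquad \tau := \breg{\Rpen^*}^{\fdagger}(T^*p, T^*\pbar),
\]
and infimizing over $k \in \N$ produces $\VSC^2(\fdagger,\Phi,\Rpen,\Sfun)$ with $\Phi$ of the desired form \eqref{eq:VSC^2_index_function} after absorbing constants. The main obstacle is the strong-convexity bound in step two: it is the Banach-space counterpart of the familiar Hilbert-space equivalence between a Lipschitz gradient of $\Rpen$ and strong convexity of $\Rpen^*$, and it has to be set up carefully because $\Rpen$ is only smooth on the smaller space $\widetilde{\X}$ while the Bregman distance $\breg{\Rpen^*}$ lives on the dual of the full space $\X$.
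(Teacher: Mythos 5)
Your decomposition $\pstar = P_k\pstar + (I-P_k)\pstar$, the treatment of the rough part via Xu--Roach and the generalized Young inequality, and the use of the quadratic upper bound coming from the Lipschitz continuity of $\Rpen'$ are exactly the ingredients of the paper's proof. The genuine gap is in your ``strong convexity'' step. The quadratic upper bound $\Rpen(\fdagger+h)\le\Rpen(\fdagger)+\langle T^*\pbar,h\rangle+\tfrac{L}{2}\|h\|_{\widetilde{\X}}^2$ is only available for $\|h\|_{\widetilde{\X}}\le C_{\fdagger}$, because the Lipschitz hypothesis on $\Rpen'$ is local; hence the inequality $\langle T^*(p-\pbar),h\rangle\le\breg{\Rpen^*}^{\fdagger}(T^*p,T^*\pbar)+\tfrac{L}{2}\|h\|_{\widetilde{\X}}^2$ holds only for such $h$. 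The optimizing step $t=\|T^*(p-\pbar)\|_{\widetilde{\X}^*}/L$ need not be admissible, and neither density nor scaling can repair this: the inequality is not homogeneous in $h$ (the Bregman term is constant in $h$), and an estimate valid on a ball does not extend off the ball by density. In fact the global bound $\|T^*(\pbar-p)\|_{\widetilde{\X}^*}\le\sqrt{2L\,\breg{\Rpen^*}^{\fdagger}(T^*p,T^*\pbar)}$ is false in general: far from $T^*\pbar$ the conjugate $\Rpen^*$ need not be strongly convex at all (if $\Rpen$ has a kink somewhere, $\Rpen^*$ has an affine piece and its Bregman distance grows only linearly there).

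The paper closes this hole with a three-way case distinction instead of a global convexity estimate. If $\langle\pbar-p,\pstar\rangle\le\frac{1}{A}\breg{\Rpen^*}^{\fdagger}(T^*p,T^*\pbar)^{1/2}$ already holds, there is nothing to prove (Case 1). If $\|\pbar-p\|_{\Y^*}$ is large, then since $\Y^*$ is $q^*$-convex with $q^*\ge 2$, the term $\frac{1}{2}\breg{\Sfun^*}^{\pstar}(p,\pbar)\gtrsim\|\pbar-p\|_{\Y^*}^{q^*}$ dominates $\|\pbar-p\|_{\Y^*}\|\pstar\|$ and the conclusion is immediate (Case 2). Only in the remaining case is the Taylor argument invoked, and there both $\|\pbar-p\|$ and, by the negation of Case 1 together with a judicious choice of the threshold $A$, the quantity $\breg{\Rpen^*}^{\fdagger}(T^*p,T^*\pbar)^{1/2}$ are bounded by $C_{\fdagger}$; one then tests with the single direction $h=\varepsilon T^{-1}P_k\pstar$ and $\varepsilon=\breg{\Rpen^*}^{\fdagger}(T^*p,T^*\pbar)^{1/2}/\sigma_k$, which these bounds guarantee to be admissible, and obtains $\langle\pbar-p,P_k\pstar\rangle\le(1+\tfrac{c}{2})\sigma_k\,\breg{\Rpen^*}^{\fdagger}(T^*p,T^*\pbar)^{1/2}$ directly, with no detour through the dual norm $\|T^*(p-\pbar)\|_{\widetilde{\X}^*}$. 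Supplying this case analysis turns your sketch into the paper's proof; without it, the key estimate for the smooth part is unjustified for $p$ far from $\pbar$.
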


\begin{proof}
	We show \eqref{eq:VSC^2_inequality} for all $\p\in\Y^*$ by distinguishing three cases: \\
	\emph{Case 1:} $\p\in \mathcal{A}:=\{\p\in \Y^* : \lsp \pbar-p,\pstar\rsp \le \frac{1}{A}	\breg{\Rpen^*}\!\paren{T^*p,T^*\pbar}^\frac{1}{2} \}$, with a constant $A>0$ whose exact value will be chosen later. 
	For these $\p$ the inequality thus holds with $\Phi(\tau)=\frac{1}{A}\sqrt{\tau}$ which is smaller than \eqref{eq:VSC^2_index_function} for $C\ge 1/A$. \\
	\emph{Case 2:} $\p\in \mathcal{B}:=\{\p\in \Y^* : \breg{\Sfun^*}\!\paren{p,\pbar}^{1-1/q^*}  \ge 2c_{q^*,Y^*}^{-1}\norm{\pstar} \}$. $\Y^*$ is $q^*$-convex, as $\Y$ is $q$-smooth, so by Lemma \ref{xu-roach}  we have for all $\p\in \mathcal{B}$ that
	\begin{align*}
		\lsp \pbar-p,\pstar\rsp \le \norm{\pbar-p}\norm{\pstar}\le \frac{1}{2}\breg{\Sfun^*}\!\paren{p,\pbar},
	\end{align*}
	so \eqref{eq:VSC^2_inequality} also holds for $\p\in \mathcal{B}$. \\
	\emph{Case 3:} $\p\in  \Y^*\setminus(\mathcal{A}\cup \mathcal{B})$. 
{\new By our regularity assumptions on $\Rpen$, there exist constants $C_{\fdagger},c>0$ such that 
 for 
all $\f\in\widetilde{\X}$ with $\norm{\f-\fdagger}_{\widetilde{\X}}\le C_{\fdagger}$ we have the first order Taylor approximation
	\begin{align*}
		\Rpen(f)\le \Rpen(\fdagger)+\lsp T^*\pbar,f-\fdagger\rsp +\frac{c}{2}\norm{f-\fdagger}_{\widetilde{\X}}^2,
	\end{align*}	
	where $c$ is the Lipschitz constant of $\Rpen'$}. Applying Young's inequalities 
	$\Rpen(\f)+\Rpen^*(\T^*p) \geq \lsp \T^*p,\f\rsp$ and 
	$\Rpen(\fdagger)+\Rpen^*(\T^*\pbar) = \lsp \T^*\pbar,\fdagger\rsp$, we find
	\begin{align*}
		\Rpen^*(T^*p)\ge \Rpen^*(T^*\pbar)+\lsp T^*(p-\pbar),\fdagger\rsp +\lsp T^*(p-\pbar),f-\fdagger\rsp-\frac{c}{2}\norm{f-\fdagger}_{\widetilde{\X}}^2
	\end{align*}
	for all $p\in \Y^*$ and for all $\f\in \widetilde{\X}$ with $\norm{\f-\fdagger}_{\widetilde{\X}}\le C_{\fdagger}$,
	which is equivalent to
	\begin{align}\label{eq:dual_product_bound}
		\lsp T^*(p-\pbar),f-\fdagger\rsp \le \breg{\Rpen^*}(T^*p,T^*\pbar) +\frac{c}{2}\norm{f-\fdagger}_{\widetilde{\X}}^2
	\end{align}
	for all $p\in \Y^*$ and for all $\f\in \widetilde{\X}$ with $\norm{\f-\fdagger}_{\widetilde{\X}}\le C_{\fdagger}$. We decompose 
	the left hand side of \eqref{eq:VSC^2_inequality} as follows:
	\begin{align*}
			\lsp \pbar-\p,\pstar	\rsp =	\lsp \pbar-\p,P_k\pstar \rsp + \lsp \pbar-\p,(I-P_k)\pstar \rsp .
	\end{align*}
	Now for some small $\varepsilon>0$ choose $\f$ in \eqref{eq:dual_product_bound} as $\f=\fdagger+\varepsilon T^{-1}P_k\pstar$. Then we can conclude that
	\begin{align*}
		\lsp \pbar-\p,P_k\pstar \rsp = \lsp T^*(\pbar-\p),T^{-1}P_k\pstar	\rsp\le \frac{1}{\varepsilon}\breg{\Rpen^*}(T^*p,T^*\pbar)+\frac{c\varepsilon}{2}\norm{T^{-1}P_k\pstar}^2_{\new \widetilde{\X}}.
\end{align*}
As $p\notin \mathcal{B}$ we know that $\norm{\pbar-p}$ is bounded, say $\norm{\pbar-p}\le B$. Now choose $A$ from above as $A=\frac{C_{\fdagger}}{B\norm{\pstar}}$. 
Then from $p\notin \mathcal{A}$ we know, that 
\begin{align*}
	\breg{\Rpen^*}(T^*p,T^*\pbar)^\frac{1}{2}\le A\norm{\pbar-p}\norm{\pstar}\le AB\norm{\pstar}\le C_{\fdagger}
\end{align*}
so we can choose $\varepsilon=\breg{\Rpen^*}(T^*p,T^*\pbar)^\frac{1}{2}/\sigma_k$, which ensures 
$\norm{f-\fdagger}\le C_{\fdagger}$. Therefore we have
	\begin{align*}
		\lsp \pbar-\p,P_k\pstar \rsp\le \paren{1+\frac{c}{2}}\sigma_k\breg{\Rpen^*}(T^*p,T^*\pbar)^\frac{1}{2}.
	\end{align*}	
	Combining everything and using Lemma \ref{xu-roach} with $\Y^*$ being $q^*$-convex we find
	\begin{align*}
		\lsp \pbar-\p,\pstar\rsp &\le \paren{1+\frac{c}{2}}\sigma_k\breg{\Rpen^*}(T^*p,T^*\pbar)^\frac{1}{2} + \kappa_k\norm{\pbar-\p} \\
		&\le \paren{1+\frac{c}{2}}\sigma_k\breg{\Rpen^*}(T^*p,T^*\pbar)^\frac{1}{2} +C_q\kappa_k^q+\frac{1}{2}\breg{\Sfun^*}\!\paren{p,\pbar}, \\
		&\le \frac{1}{2}\breg{\Sfun^*}\!\paren{p,\pbar} +C\paren{\sigma_k\breg{\Rpen^*}(T^*p,T^*\pbar)^\frac{1}{2} +\kappa_k^q},
	\end{align*}
	with $C_q=	\frac{1}{q}\paren{\frac{2}{q^*c_{q^*,\Y^*}}}^{q/q^*}$\!\!, $C=\max\left\{\frac{2+c}{2},C_q, \frac{1}{A} \right\}$, which completes the proof.
\qed\end{proof}

\begin{prop}[verification of $\VSC^3(\fdagger,\Phi,\Rpen,\Sfun)$] \label{prop_vsc3_proof}
Suppose that Assumption \ref{assump_RS} holds true {\new and let $\wbar\in \X$ as in Definition \ref{VSC^3} exist.
{ \newnew Let $0<\bar{t}\le 1$ and assume that for all  $f^*\in\partial\Rpen(\fdagger)$ there exists some $\omega^*\in\X^*$ such that
\begin{align}\label{eq:taylor_for_subdiff}
	\norm{f^*-f_t^*-t\omega^*}\le C_{\wbar}t^2,
\end{align}
whenever $0<t\le \bar{t}$ and $f_t^*\in\partial\Rpen(\fdagger-t\wbar)$.} This last assumption follows for example from $\Rpen$ being two times Fr\'{e}chet-differentiable in $\X$ in a neighborhood of $\fdagger$ with $\Rpen''\colon \X\to L(\X,\X^*)$  uniformly Lipschitz continuous in this neighborhood.} Further assume
	\begin{align}\label{eq:bregman_lower_bound}
	\breg{\Rpen}\!\paren{f_1,f_2}\ge C_\mu\norm{f_1-f_2}^\mu
	\end{align}
for some  $\mu>1$, $C_{\mu}>0$ and all $f_1,f_2\in \dom(\Rpen)$. We have:

\begin{enumerate}
	\item 
	If $\omega^* = \T^*\pbar^{(2)}$ for some $\pbar^{(2)}\in\Y^*$, then 
	$\VSC^3(\fdagger,\Phi,\Rpen,\Sfun)$ holds true with $\Phi(\tau):=\norm{\pbar^{(2)}}\tau^{1/q}$. 
	\item 
	Suppose that $\mu\leq 2$, $\tfrac{1}{\mu}+\tfrac{1}{\mu^*}=1$, 
 and that 
	there exists a family of operators $P_k\in L(\X^*)$ indexed by 
	$k\in \mathbb{N}$ such that $P_k\omega^*\in\T^*\Y^*$  for all $k\in \mathbb{N}$, and let
	\begin{align}\label{eq:proj_for_omegastar}
	&\kappa_k:=\|(I-P_k)\omega^*\|_{\X^*},
\qquad\qquad \sigma_k:= \|(\T^*)^{-1}P_k\omega^*\|_{\Y^{\new *}}.
	\end{align}
If $\lim_{j\to\infty}\kappa_k = 0$,
then $\VSC^3(\fdagger,\Phi,\Rpen,\Sfun)$ holds true with the index function 
	\begin{align}\label{eq:Phi_inf}
	\Phi(\tau):= \inf_{k\in \mathbb{N}}\left[\sigma_k\tau^{1/q}
        + \frac{\kappa_k^{\mu^*}}{\mu^*(C_\mu \mu)^{\mu^*/\mu}} \right].
	\end{align}
\end{enumerate}	
\end{prop}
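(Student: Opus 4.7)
The plan is to verify the inequality in Definition \ref{VSC^3} by applying the Taylor-type hypothesis \eqref{eq:taylor_for_subdiff} to the distinguished subgradient $f^*:=\T^*\pbar\in\partial\Rpen(\fdagger)$, obtaining
\begin{align*}
f_t^* - \T^*\pbar = -t\omega^* + r_t,\qquad \|r_t\|_{\X^*} \le C_{\wbar}\, t^2,
\end{align*}
valid for all $0<t\le\overline{t}$ and all $f_t^*\in\partial\Rpen(\fdagger-t\wbar)$. Pairing with $\fdagger-t\wbar-\f$ splits the left-hand side of the $\VSC^3$ inequality into a main term $-t\lsp\omega^*,\fdagger-t\wbar-\f\rsp$ and a Taylor remainder controlled by $C_{\wbar}\,t^2\|\fdagger-t\wbar-\f\|$.

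For part (i), the assumption $\omega^*=\T^*\pbar^{(2)}$ converts the main term to $t\lsp\pbar^{(2)},\T\f-\gdagger+t\T\wbar\rsp$, bounded by $t\|\pbar^{(2)}\|\cdot\|\T\f-\gdagger+t\T\wbar\|$; factoring out $t^2$ exhibits this as $t^2\|\pbar^{(2)}\|\tau^{1/q}$ with $\tau=t^{-q}\|\T\f-\gdagger+t\T\wbar\|^q$, matching the argument of $\Phi$ in Definition \ref{VSC^3} and yielding exactly $\Phi(\tau)=\|\pbar^{(2)}\|\tau^{1/q}$. For part (ii), I decompose $\omega^*=P_k\omega^*+(I-P_k)\omega^*$; by the hypothesis $P_k\omega^*\in\T^*\Y^*$ there exists $q_k\in\Y^*$ with $\T^*q_k=P_k\omega^*$ and $\|q_k\|_{\Y^*}=\sigma_k$, and the $P_k\omega^*$ part is treated exactly as in part (i) to contribute $t^2\sigma_k\tau^{1/q}$, while $|-t\lsp(I-P_k)\omega^*,\fdagger-t\wbar-\f\rsp|\le t\kappa_k\|\fdagger-t\wbar-\f\|$.

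In both parts the residual terms linear in $\|\fdagger-t\wbar-\f\|$ are handled by Young's inequality with the conjugate exponents $(\mu^*,\mu)$, scaled so that the $\mu$-power equals $C_\mu\|\fdagger-t\wbar-\f\|^\mu$ and is absorbed into $\breg{\Rpen}^{f_t^*}(\f,\fdagger-t\wbar)$ via \eqref{eq:bregman_lower_bound}. The $\mu^*$-power residual produces constants times $t^{\mu^*}\kappa_k^{\mu^*}$ from the $(I-P_k)\omega^*$ piece and $t^{2\mu^*}$ from the Taylor remainder; since $\mu\le 2$ implies $\mu^*\ge 2\ge\mu$ and $t\le\overline{t}\le 1$ gives $t^{\mu^*}\le t^2$ and $t^{2\mu^*}\le t^{2\mu}$, the former supplies, after taking the infimum over $k$, the $\kappa_k^{\mu^*}/(\mu^*(C_\mu\mu)^{\mu^*/\mu})$ summand of $\Phi(\tau)$ in \eqref{eq:Phi_inf}, while the latter is the $\beta t^{2\mu}$ slack of Definition \ref{VSC^3}. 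The main point requiring care is this exponent bookkeeping, namely using $\mu\le 2$ and $t\le 1$ to make the two different residual powers of $t$ produced by Young's inequality align with the two target powers of $t$ in Definition \ref{VSC^3}; choosing the Young scaling separately for the $\kappa_k$- and the $C_{\wbar}$-contributions (rather than bounding their sum at once) is what produces the clean constants displayed in \eqref{eq:Phi_inf}.
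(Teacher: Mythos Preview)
Your proposal is correct and follows the same approach as the paper: apply \eqref{eq:taylor_for_subdiff} at $f^*=\T^*\pbar$, absorb the Taylor remainder $C_{\wbar}t^2\|\fdagger-t\wbar-\f\|$ into $\breg{\Rpen}(\f,\fdagger-t\wbar)+\beta t^{2\mu^*}$ via \eqref{eq:bregman_lower_bound} and Young's inequality, and for part~(ii) split $\omega^*=P_k\omega^*+(I-P_k)\omega^*$, handling the second piece by another Young application together with \eqref{eq:bregman_lower_bound}. The only inessential differences are that the paper additionally verifies the sufficient condition on $\Rpen''$ and remarks that $\Phi$ in \eqref{eq:Phi_inf} is an index function, and it takes the $\mu$ in Definition~\ref{VSC^3} to be $\mu^*$ directly rather than invoking your conversion $t^{2\mu^*}\le t^{2\mu}$.
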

\begin{proof}
{\new Firstly, to prove that \eqref{eq:taylor_for_subdiff} is implied by two times differentiability with $\Rpen''$ Lipschitz continuous, recall that 
$\partial \Rpen(f) = \{\Rpen'[f]\}$ if $\Rpen$ is Fr\'{e}chet-differentiable in $\X$, then 
by the first order Taylor approximation of $t\mapsto \Rpen'[\fdagger-t\wbar]$ at $t=0$ we have
\begin{align*}
	\norm{\Rpen'[\fdagger-t\wbar]-\Rpen'[\fdagger] + t\Rpen''[\fdagger](\wbar,\cdot)}_{\X^*} &\leq  C t^2\norm{\wbar}^2 
\end{align*}
for some $C>0$. Thus  \eqref{eq:taylor_for_subdiff} holds with $\omega^*=\Rpen''[\fdagger](\wbar,\cdot)$ and $C_{\wbar}=C\norm{\wbar}^2$.   
Now let \eqref{eq:taylor_for_subdiff} hold true, then we have for all $f_t^*\in\partial\Rpen(\fdagger-t\wbar)$ that
	\begin{align}\label{eq:aux_smooth_pen}
		\lsp f_t^*-\T^*\pbar, \fdagger-t\wbar-\f\rsp 
		&\le -t\lsp\omega^*, \fdagger-t\wbar-\f\rsp + C_{\wbar}t^2\norm{\fdagger-t\wbar-f}.
	\end{align}
Then using \eqref{eq:bregman_lower_bound} and Young's 
inequality, we find that 
\begin{align*}
 C_{\wbar}t^2\norm{\fdagger-t\wbar-f} 
&\leq \gamma t^2\breg{\Rpen}\!\paren{f,\fdagger-t\wbar}^\frac{1}{\mu}
\leq  \breg{\Rpen}\!\paren{f,\fdagger-t\wbar}+\beta t^{2\mu^*}
\end{align*}
with $\gamma:=C_{\wbar}C_\mu^{-\frac{1}{\mu}}$ and 
$\beta:=\frac{1}{\mu^*}\mu^{-\frac{\mu^*}{\mu}}\gamma^{\mu^*}$.}

So we only need to bound the first term on the right hand side of \eqref{eq:aux_smooth_pen} and this is done in two ways based on the two different assumptions:
\begin{enumerate}
\item If $\omega^* = \T^*\pbar^{(2)}$, then 
	\begin{align*}
		-t\lsp \omega^*, \fdagger-t\wbar-f\rsp 
		&=  -t\lsp \pbar^{(2)}, \gdagger-t\T\wbar-\T f\rsp \\
		&\le t^2\norm{\pbar^{(2)}}\left(t^{-q}
	\norm{\T f-\gdagger+t\T\wbar}^q\right)^{1/q}.
	\end{align*}
	Hence Assumption \ref{VSC^3} holds true $\Phi(\tau):=\norm{\pbar^{(2)}}\tau^{1/q}$. 
	\item In the second case we have for all $k\in \mathbb{N}$ with $c_\mu:=\frac{1}{\mu^*(C_\mu \mu)^{\mu^*/\mu}}$ that 
	\begin{align*}
	&-t\lsp \omega^*, \fdagger-t\wbar-f\rsp \\
	 &= -t \lsp P_k \omega^*, \fdagger-t\wbar-f\rsp -t\lsp (I-P_k)\omega^*, \fdagger-t\wbar-\f\rsp \\
	&\leq t\sigma_k \norm{\T \f-\gdagger+t\T\wbar} + t\kappa_k \norm{\fdagger-t\wbar-\f}\\
	 &\leq t^2 \left(	\sigma_k t^{-1}\norm{\T \f-\gdagger+t\T\wbar} + c_\mu
	t^{\mu^*-2}\kappa_k^{\mu^*}
	\right) 
	 + C_\mu\norm{\fdagger-t\wbar-\f}^\mu \\
&\leq t^2 \left(
	\sigma_k t^{-1}\norm{\T \f-\gdagger+t\T\wbar} +  c_\mu \kappa_k^{\mu^*}
	\right) + 	\breg{\Rpen}\!\paren{\f,\fdagger-t\wbar}
	\end{align*}
	for {\new $t\le \bar{t}\le 1$} as $\mu^*\geq 2$. 
	Substituting $\tau=t^{-q}\norm{\T f-\gdagger+t\T\wbar}^q$ and taking the infimum over $k$ shows 
	Assumption \ref{VSC^3} with $\Phi$ given by \eqref{eq:Phi_inf}. It follows as in 
\cite[Thm.~2.1]{HW:17} that $\Phi$ is an index function. \qed
\end{enumerate}
\end{proof}

\begin{exam}
{\newnew Let $\Omega$ be some measurable space with $\sigma$-finite measure.} Let $\Y=L^q(\Omega)$ for $1<q\le 2$ such that $\Y$  is $q$-smooth and $2$-convex. 
Moreover, let $\X=L^\mu(\Omega)$ with $\mu=2$ or {\new $\mu\ge 3$ such that the norm $\norm{\cdot}_\X$ is two times differentiable with Lipschitz continuous second derivative}
and choose $\Rpen(\cdot)=\frac{1}{\mu}\norm{\cdot}_\X^\mu$. Then \eqref{eq:bregman_lower_bound} holds true 
by Lemma \ref{xu-roach}. Assume that $\pbar$, $\wbar$, and  $\pbar^{(2)}$ are 
given as in Proposition \ref{prop_vsc3_proof}, Case 1.
Then Assumption \ref{VSC^3} holds true,  and Theorem \ref{rate_theorem} yields 
	 \begin{align}		
	 	\breg{\Rpen}\!\paren{\itfal{2},\fdagger}
		&=\mathcal{O}\left(\frac{\delta^q}{\alpha} + {\newnew \alpha_q^2(-\Phi})^*\left(-C\alpha_q^{-1}\right)+\alpha_q^{2\mu^*}\right),
\end{align}
where we again use the notation $\alpha_q:=\alpha^{q^*-1}$. 
Note that the Fenchel conjugate {\newnew $(-\Phi)^*$ fulfills $(-\Phi)^*(-s)\sim 1/s$} such that 
\begin{align}\label{awesome_bound}
\inf_{\alpha>0}\breg{\Rpen}\!\paren{\itfal{2},\fdagger}
&=\mathcal{O}\!\left(\inf_{\alpha>0}\left[\frac{\delta^q}{\alpha}+\alpha_q^{3}+\alpha_q^{2\mu^*}\right]\right)
	=\mathcal{O}\!\paren{\delta^{\frac{q\min(3,2\mu^*)}{q-1+\min(3,2\mu^*)}}}.
\end{align}
The best known error bound for Tikhonov regularization (requiring the existence of $\pbar$ and $\wbar$ as above) is 
	 \begin{align}\label{eq:old_bound}
	 	\inf_{\alpha>0}\breg{\Rpen}\!\paren{\fal,\fdagger}
		&=\mathcal{O}\!\left(\inf_{\alpha>0}\left[\frac{\delta^q}{\alpha}+\alpha_q^{2}\right]\right) 
		= \mathcal{O}\!\paren{\delta^{\frac{2q}{q+1}}}.
	 \end{align}
	(see \cite{Neubauer2010}). 
	As $\mu^*>1$ we see that the bound \eqref{awesome_bound} for $\itfal{2}$ is better than \eqref{eq:old_bound}. 
	In particular, if $\mu=2$ or {\new $\mu=3$} and hence $\min(3,2\mu^*)=3$, the right hand side in \eqref{awesome_bound} is $\mathcal{O}\left(\delta^{3q/(q+2)}\right)$, 
  which for $q=2$ yields the optimal bound $\|\itfal{2}-\fdagger\| = \mathcal{O}\left(\delta^{3/4}\right)$ for the spectral source 
	condition \eqref{eq:ssc} with $\nu=3$. However, for $\mu>3$ the rate \eqref{awesome_bound} is slower. 
\end{exam}

\subsection{Application to iterated maximum entropy regularization}\label{sec:ME}
In this subsection we will apply Propositions \ref{prop_vsc2_proof} and \ref{prop_vsc3_proof} to the case that 
the penalty term is chosen as a cross-entropy term given by the Kullback-Leibler divergence 
\begin{align}\label{eq:defi_KL}
\Rpen(\f) := \KL(\f,\f_0):= \int_{\manifold}\left[\f \ln \frac{\f}{\f_0}-\f+\f_0\right]\,dx
\end{align}
for some Riemannian manifold $\manifold$. 
Here $\f_0$ is some a-priori guess of $\f$, possibly constant. For more background information 
and references on entropy regularization we refer to \cite{skilling:89}. 
Under the source condition \eqref{eq:SSChalf} convergence rates 
of order $\|\hat{\f}_{\alpha}-\fdagger\|_{L^1}= \mathcal{O}(\sqrt{\delta})$ were shown 
in \cite{eggermont:93} by variational methods and 
in \cite{EL:93} by a reformulation as Tikhonov regularization with quadratic penalty term 
for a nonlinear forward operator. In \cite{resmerita:05} the faster rate 
$\|\hat{\f}_{\alpha}-\fdagger\|_{L^1}= \mathcal{O}(\delta^{2/3})$ was obtained under the source condition 
$T^*T\wbar\in \partial \Rpen(\fdagger)$. 

A simple computation shows that 
\[
\breg{\Rpen}^{\varphi^*}(\f,\varphi)= \KL(\f,\varphi)
\]
if $\varphi^*\in \partial \Rpen(\varphi)$, i.e.\ $\varphi^*= \ln (\varphi/\f_0)$. 
Let $\HilbertY$ be a Hilbert space and $\T\colon L^1\left( \manifold\right)\to  \HilbertY$ linear and bounded. 
We want to approximate $\fdagger\in \mathcal{C}$ where 
$\mathcal{C}\subset L^1(\manifold)$ is closed and convex, from noisy data 
$\gobs\in L^2$ with \[\norm{\gdagger-\gobs}_{\HilbertY}\le \delta\] and some a-priori 
guess $\f_0\in \mathcal{C}$ of $\fdagger$. The set $\mathcal{C}$ may contain 
only probability densities or further a-priori information such as 
box-constraints. 
To this end we apply generalized Tikhonov regularization in the form of maximum entropy regularization
	\[\hat{\f}_\alpha\in \argmin_{\f\in \mathcal{C}} \left[ \norm{T\f-\gobs}^2_{\HilbertY}+\alpha \KL\paren{\f,\f_0} \right] . \]
This amounts to choosing $\Rpen(\f):=\KL(\f,\f_0) + \iota_{\mathcal{C}}(\f)$ with the indicator function 
$\iota_{\mathcal{C}}(\f):=0$ if $\f\in\mathcal{C}$ and $\iota_{\mathcal{C}}(\f):=\infty$ else. 	
{\new If all iterates are in the interior of $\mathcal{C}$,} 
Bregman iteration is given by
\[\itfal{n}\in \argmin_{\f\in\mathcal{C}}\left[ \norm{T\f-\gobs}^2_{\HilbertY}+\alpha \KL\paren{\f,\itfal{n-1}} \right], 
\]	
{\new otherwise the iteration formula may involve an element of the normal cone of $\mathcal{C}$ at $\itfal{n-1}$.}
\begin{thm} \label{thm:ME_rates}
Let $\torus:=\R/\Z$, let $\manifold:=\mathbb{T}^d$ be the $d$-dimensional torus, $\Y=L^2(\mathbb{T}^d)$ and define $\Sfun_{\rm{sq}}(g)=\frac{1}{2}\norm{g}_\Y^2$. 
Suppose that $\T$ and its $L^2$-adjoint $T^*$ are $a>0$ times smoothing  
in the sense that 
$\T,\T^*:B^s_{p,q}(\torus^d)\to B^{s+a}_{p,q}(\torus^d)$ 
are isomorphisms for all $s\in [0,3a]$, $p\in [2,\infty]$, $q\in [2,\infty]$. 
Moreover, suppose there exists $\rho>0$ such that 
\begin{align}\label{eq:fdag_boundedness}
\rho \leq \frac{\fdagger}{f_0} \leq \rho^{-1}\qquad \mbox{a.e.\ in }\torus^d
\end{align}
and 
\begin{itemize}
	\item either $\mathcal{C} \subset \left\{\f\in L^1\colon \f\geq 0, \int \f\,dx =1\right\}$, then we set
	$p:=\infty$ 
	\item or $\sup_{f\in \mathcal{C}} \norm{f}_{L^{\infty}}<\infty$ and $a>d/2$, then we set $p:=2$. 
\end{itemize}
We make a further case distinction:
\begin{enumerate}
	\item Assume that
		\begin{align*}
			\frac{\fdagger}{f_0} \in B^s_{p,\infty}(\torus^d)\qquad 
			\mbox{for some }s\in \left(a,2a\right). 
		\end{align*}
		Then there exists $C>0$ such that $\VSC^2(\fdagger,\Phi,\KL(\cdot,\f_0),\Sfun_{\rm{sq}})$ holds true with 
		\[
			\Phi(\tau) = C \tau^{\frac{s-a}{s}}.
		\]
	\item Assume additionally to \eqref{eq:fdag_boundedness} that $\fdagger\ge \rho$ and
	\begin{align}\label{eq:fdag_smoothness}
\fdagger, \f_0\in B^s_{p,\infty}(\torus^d)\qquad 
\mbox{for some }s\in \left( 2a+\frac{d}{p},3a\right). 
\end{align}
Then there exists $C>0$ such that $\VSC^3(\fdagger,\Phi,\KL(\cdot,\f_0),\Sfun_{\rm{sq}})$ holds true with $\mu=2$ and 
\[
\Phi(\tau) = C \tau^{\frac{s-2a}{s-a}}.
\]
\end{enumerate}
In all four cases we obtain for the parameter choice 
$\alpha\sim \delta^{\frac{2a}{s +a}}$ the convergence rate 
\begin{align}\label{eq:ME_rate}
\KL\paren{\itfal{2},\fdagger}= \mathcal{O}\paren{\delta^{\frac{2s}{s+a}}},\qquad 
\delta\searrow 0.
\end{align}

\end{thm}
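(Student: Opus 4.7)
The plan is to verify $\VSC^2(\fdagger,\Phi,\KL(\cdot,\f_0),\Sfun_{\rm{sq}})$ in Case~1 and $\VSC^3(\fdagger,\Phi,\KL(\cdot,\f_0),\Sfun_{\rm{sq}})$ in Case~2 via Propositions \ref{prop_vsc2_proof} and \ref{prop_vsc3_proof}, using a dyadic Fourier (Littlewood--Paley) projection family on $\torus^d$ to realize the projections required there, and then to substitute the resulting index functions $\Phi$ into the rate \eqref{eq:Grasmairs_rates} (Case~1) respectively Theorem \ref{rate_theorem} (Case~2). The single rate \eqref{eq:ME_rate} emerges in both cases after balancing $\delta^2/\alpha$ against the source-condition contribution and choosing $\alpha\sim\delta^{2a/(s+a)}$.

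\paragraph{Source elements $\pbar$ and $\wbar$.} Since $\rho\le \fdagger/\f_0\le\rho^{-1}$ and $\ln$ is smooth on $[\rho,\rho^{-1}]$, the composition lemma for Besov spaces, in Case~2 combined with the algebra property of $B^s_{p,\infty}$ (valid under the stated $s>d/p$), yields $\ln(\fdagger/\f_0)\in B^s_{p,\infty}(\torus^d)$. The isomorphism assumption on $T^*$ then produces $\pbar\in B^{s-a}_{p,\infty}\subset L^2(\torus^d)$ with $T^*\pbar=\ln(\fdagger/\f_0)+c$, the constant $c$ accounting for the normal cone of the probability-simplex constraint when $p=\infty$; in particular $T^*\pbar\in\partial\Rpen(\fdagger)$. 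In Case~2 set $\wbar:=T^{-1}\pbar\in B^{s-2a}_{p,\infty}\hookrightarrow L^\infty$ (the embedding holds by $s-2a>d/p$); since $\Sfun_{\rm{sq}}^*=\Sfun_{\rm{sq}}$ is the squared $L^2$-norm, $T\wbar=\pbar\in\partial\Sfun_{\rm{sq}}^*(\pbar)$ holds automatically.

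\paragraph{Index functions.} Dyadic Fourier projectors $P_k$ on $\torus^d$ admit the Jackson estimate $\|(I-P_k)u\|_{L^2}\lesssim 2^{-k\sigma}\|u\|_{B^\sigma_{2,\infty}}$ and the Bernstein estimate $\|P_k v\|_{B^b_{p,\infty}}\lesssim 2^{k(b-c)}\|v\|_{B^c_{p,\infty}}$ for $b\ge c$; combined with the fact that $T^{-1}$ and $(T^*)^{-1}$ each lose exactly $a$ derivatives in the Besov scale, they allow one to balance the two summands of \eqref{eq:VSC^2_index_function} respectively \eqref{eq:Phi_inf} and to deduce $\Phi(\tau)=C\tau^{(s-a)/s}$ in Case~1 and $\Phi(\tau)=C\tau^{(s-2a)/(s-a)}$ in Case~2. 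For Case~2 the additional ingredients are (i)~a first-order Taylor expansion of $t\mapsto \ln((\fdagger-t\wbar)/\f_0)$ at $t=0$ identifying $\omega^*=-\wbar/\fdagger$, which lies in $B^{s-2a}_{p,\infty}\subset L^\infty$ by the Besov algebra property together with $\fdagger\ge\rho$, and (ii)~Pinsker's inequality $\KL(f_1,f_2)\ge\tfrac12 \|f_1-f_2\|_{L^1}^2$, which supplies \eqref{eq:bregman_lower_bound} with $\mu=2$. The restriction $s<2a$ in Case~1 is exactly what prevents $\sigma_k$ in Proposition \ref{prop_vsc2_proof} from being trivially bounded, while $s<3a$ in Case~2 plays the same role for $\sigma_k$ in Proposition \ref{prop_vsc3_proof}.

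\paragraph{Conclusion and main obstacle.} With $q=q^*=2$ both index functions satisfy $(-\Phi)^*(-1/\alpha)\sim\alpha^{-\theta/(1-\theta)}$; substituted into \eqref{eq:Grasmairs_rates} or Theorem \ref{rate_theorem} the dominant terms reduce to $\delta^2/\alpha+\alpha^{s/a}$, balanced by $\alpha\sim\delta^{2a/(s+a)}$ to give the rate \eqref{eq:ME_rate}; the auxiliary term $\beta\alpha^{2\mu(q^*-1)}=\beta\alpha^4$ in Theorem \ref{rate_theorem} is subdominant since $s<3a<4a$. The main technical difficulty I foresee is the Taylor remainder estimate \eqref{eq:taylor_for_subdiff} for the $\X^*=L^\infty$-valued subgradient map $t\mapsto \ln((\fdagger-t\wbar)/\f_0)$ uniformly in $t\in(0,\bar t]$, which requires $\fdagger\ge\rho$ to control the Hessian of $\ln$ and the Besov algebra property to absorb the remainder $\wbar^2/\fdagger^2$; a secondary subtlety is that the sharp Fourier projector $P_k$ is not bounded on $L^\infty$, so a smoothed multiplier or a sharper embedding between Besov classes may be needed to obtain the claimed decay of $\kappa_k$, and the additive Lagrange multiplier from the simplex constraint in Case~1 with $p=\infty$ must be absorbed into $\pbar$ without spoiling its Besov norm.
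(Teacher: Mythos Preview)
Your overall strategy matches the paper's proof closely: verify $\VSC^2$/$\VSC^3$ via Propositions \ref{prop_vsc2_proof} and \ref{prop_vsc3_proof} using a dyadic projection family, then feed the resulting power-type $\Phi$ into \eqref{eq:Grasmairs_rates} or Theorem \ref{rate_theorem} and balance. The identification of the source elements, the role of $\omega^*=\wbar/\fdagger$ (up to an immaterial sign), the embedding $B^{s-2a}_{p,\infty}\hookrightarrow L^\infty$ via $s-2a>d/p$, and the final balancing are all as in the paper.

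Two points deserve attention. First, your choice of sharp Fourier projectors is the one place where your route genuinely differs from the paper's, and you correctly flag the obstacle: unboundedness on $L^\infty$. The paper sidesteps this entirely by taking $P_k$ to be the dyadic spline quasi-interpolation operators of de Vore--Popov, which are bounded on $L^p$ for every $p\in[1,\infty]$ and for which both the Jackson bound \eqref{eq:general_kappa} and the Bernstein-type bound \eqref{eq:general_sigma} are available off the shelf; this makes the $p=\infty$ case go through without any smoothing of the multiplier. Your suggestion of a smoothed Littlewood--Paley multiplier would also work, but the spline choice is cleaner.

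Second, there is a genuine gap in your verification of \eqref{eq:bregman_lower_bound}: Pinsker's inequality only yields $\KL(f_1,f_2)\ge \tfrac12\|f_1-f_2\|_{L^1}^2$ for probability densities, i.e.\ it covers the case $p=\infty$ (where $\X=L^{p^*}=L^1$). In the case $p=2$ one needs a lower bound of the form $\KL(f_1,f_2)\ge c\|f_1-f_2\|_{L^2}^2$, which Pinsker does not give. The paper invokes the separate inequality
\[
\Big(\tfrac{2}{3}\|f_1\|_{L^\infty}+\tfrac{4}{3}\|f_2\|_{L^\infty}\Big)\KL(f_1,f_2)\ge \tfrac12\|f_1-f_2\|_{L^2}^2,
\]
which requires precisely the uniform $L^\infty$ bound on $\mathcal C$ assumed in that case. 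Without this, your argument for $\mu=2$ is incomplete for half of the ``four cases''.
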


\begin{proof}
Note that due to \eqref{eq:fdag_boundedness} the functional $\Rpen$ is {\new Fr\'{e}chet differentiable 
at $\fdagger$ in $L^{\infty}(\torus^d)$ and 
\begin{align}
\begin{split} \label{eq:KL_derivatives}
\Rpen'[\fdagger](g) = \int_{\torus^d} \ln\paren{\frac{\fdagger}{f_0}} g\,dx, \quad 
&\Rpen''[\fdagger](g,h) = \int_{\torus^d} \frac{1}{\fdagger} h g\,dx \\
\mbox{and}\quad &\Rpen'''[\fdagger](g,h,h) = -\int_{\torus^d} \left(\frac{1}{\fdagger}\right)^2 h^2 g\,dx.
\end{split}
\end{align}
Hence, under the given regularity assumption we have
\[
T^*\pbar=\ln\left(\frac{\fdagger}{f_0}\right).
\] 
Local Lipschitz continuity of $\Rpen'$ w.r.t.\  $\widetilde{\X}:= L^{\infty}(\torus^d)$ follows from 
local boundedness of $\Rpen''$  w.r.t.\  $\widetilde{\X}$.  }
We are going to verify the assumptions of Propositions \ref{prop_vsc2_proof} and \ref{prop_vsc3_proof} choosing 
$\X = L^{\pconj}(\torus^d)$ with the conjugate exponent $\pconj$ of $p$. 
The operators $P_k$ for $k\in \mathbb{N}_0$ are chosen as quasi-interpolation operators onto 
level $k$ of a dyadic spline space of sufficiently high order as defined in \cite[eq.~(4.20)]{dVP:88}. 
Then we have the following two inequalities, where $C>0$ now and in the following will denote a generic constant. By \cite[Thm.~4.5]{dVP:88} we have for all $t>0$ and all $h\in B^t_{p,\infty}(\torus^d)$ 
\begin{align} \label{eq:general_kappa}
	\norm{(I-P_k)h}_{L^p}\le C2^{-kt}\norm{h}_{B^t_{p,\infty}},
\end{align}
where $C$ is independent of $\omega^*$ and $k$. And we have for all $q\in (1,p]$, $t,r>0$, with $t<r$ that
\begin{align}\label{eq:general_sigma}
	\norm{P_k}_{B^t_{p,\infty}\to B^r_{q,2}}\le C 2^{k(r-t)},
\end{align}
where again, $C$ is independent of $k$. The second inequality can be established as follows: 
From the fact that $N_{t,p,q}(f):=\paren{\sum_{l=0}^\infty 2^{ltq}\|P_lf-P_{l-1}f\|_{L^p}^q}^{1/q}$
(with $P_{-1}:=0$ and change to supremum norm if $q=\infty$) is an equivalent norm on $B^{s}_{p,q}(\torus^d)$ 
(\cite[Thm.~5.1]{dVP:88})  we conclude that
\begin{align*}
	\norm{P_k}_{B^t_{p,\infty}\to B^r_{p,q}}\le C\sup_{f \text{ with } N_{t,p,\infty}(f)\le 1 }N_{r,q,2}(P_k f)
\end{align*}
We have $P_lP_k=P_{\min(k,l)}$ and therefore $P_lP_kf-P_{l-1}P_kf=0$ for $l>k$. From $N_{t,p,\infty}(f) =\sup_{l\in \N}2^{lt}\|P_lf-P_{l-1}f\|_{L^p} \le 1$ we can conclude
$\|P_lf-P_{l-1}f\|_{L^q}\le 2^{-lt}$ by the continuity of the embedding 
$L^p(\torus^d)\hookrightarrow L^q(\torus^d)$. Thus we find 
\begin{align*}
	\norm{P_k}_{B^t_{p,\infty}\to B^r_{p,q}}\le C \paren{\sum_{l=0}^k 2^{2lr}2^{-2lt}}^\frac{1}{2}\le C 2^{k(r-t)}.
\end{align*}
The proof works for both $p=2,p=\infty$ simultaneously, but we distinguish between the two smoothness assumptions. 

\emph{Case 1:}
As $\rho\leq \fdagger/f_0\leq \rho^{-1}$ and $\ln$ restricted to 
$[\rho,\rho^{-1}]$ is infinitely smooth, it follows from the theorem in
\cite{kateb:00} that  $\ln(\fdagger/f_0)\in B^s_{p,\infty}$. As $T^*: B^{s-a}_{p,\infty}(\torus^d)\to B^s_{p,\infty}(\torus^d)$ is an 
isomorphism we have $\pbar\in B^{s-a}_{p,\infty}$. $\Rpen''[\f]=1/\f$ is uniformly bounded in a small neigborhood of $\fdagger\ge \rho$  so we can conclude from \eqref{eq:general_kappa} that 
$\kappa_k\le C2^{-k(s-a)}\norm{\pbar}_{B^{s-a}_{p,\infty}}$. 
It follows from \eqref{eq:general_sigma} that 
\begin{align*}
	\norm{T^{-1}P_k\pbar}_{L^p} \le C\norm{T^{-1}P_k\pbar}_{B_{p,2}^0}
	&\le C\norm{T^{-1}}_{B_{p,2}^a\to B_{p,2}^0}\norm{P_k}_{B_{p,\infty}^{s-a}\to B_{p,2}^a}\norm{\pbar}_{B_{p,\infty}^{s-a}}\\
	&\le C 2^{k(2a-s)} \norm{\pbar}_{B_{p,\infty}^{s-a}},
\end{align*}
so  $\sigma_k\leq\max\{1,C2^{k(2a-s)}\norm{\pbar}_{B_{p,\infty}^{s-a}}\}$. 
Then Proposition \ref{prop_vsc2_proof} and the choice $2^{-k}\sim \tau^{1/(2s)}$ 
show that $\VSC^2(\fdagger,\Phi,\KL(\cdot,\f_0),\Sfun_{\rm sq})$ holds true with
\begin{align*}
\Phi(\tau) &= C \inf_{k\in \mathbb{N}}\left[2^{-k(s-2a)}\sqrt{\tau}+  2^{-k(2s-2a)}\right]
\leq C \tau^{\frac{s-a}{s}}.
\end{align*}
For the last statement we apply \eqref{eq:Grasmairs_rates} with $q=2$ and note 
that $(-\Phi)^*(x)= C(-x)^{(a-s)/a}$ for $x<0$ and $(-\Phi)^*(x)=\infty$ else. Hence, 
$\KL\paren{\fdagger,\itfal{2}}\leq C(\delta^2/\alpha + \alpha^{s/a})$, and the choice 
$\alpha\sim \delta^{\frac{2a}{s +a}}$ leads to \eqref{eq:ME_rate}.

\emph{Case 2:} 
Assumption \eqref{eq:bregman_lower_bound} of Proposition \ref{prop_vsc3_proof} is satisfied with $\mu=2$ due to the inequalities
\begin{subequations}
\begin{align*}
&2\KL(f_1,f_2)\geq \|f_1-f_2\|_{L^1}^2&& \mbox{if  } \|f_1\|_{L^1} = \|f_2\|_{L^1}=1\\
&\paren{\frac{2}{3}\|f_1\|_{L^\infty}+\frac{4}{3}\|f_2\|_{L^\infty}}\KL(f_1,f_2)
\geq \frac{1}{2}\|f_1-f_2\|_{L^2}^2&& \mbox{if  } \|f_1\|_{L^\infty}, \|f_2\|_{L^\infty}<\infty
\end{align*}
\end{subequations}
for $p=\infty$ and $p=2$, respectively (see \cite[Prop. 2.3]{BL:91} and \cite[Lemma 2.6]{HW:16}). 
By $\fdagger\ge \rho$ we have $\f_0\ge \rho^2$, so using the theorem in \cite{kateb:00} and  infinite smoothness of 
$F(x):=1/x$ on $[\rho^2,\infty)$ we obtain 
$1/\fdagger,1/\f_0 \in B^s_{p,\infty}(\torus^d)$. 
It then follows from \cite[Thm 6.6, case 1b]{johnsen:95}  that $\fdagger/f_0\in B^s_{p,\infty}(\torus^d)$.
As $\rho\leq \fdagger/f_0\leq \rho^{-1}$ and $\ln$ restricted to 
$[\rho,\rho^{-1}]$ is infinitely smooth, it follows again from 
\cite{kateb:00} that  $\ln(\fdagger/f_0)\in B^s_{p,\infty}$. 
{\new Thus we have $\ln(\fdagger/f_0)=T^*T\wbar$ for some $\wbar\in\X$ and as} $T^*T: B^{s-2a}_{p,\infty}(\torus^d)\to B^s_{p,\infty}(\torus^d)$ is an 
isomorphism, we obtain $\wbar\in  B^{s-2a}_{p,\infty}(\torus^d)$. By our assumptions we 
have $s-2a-1/p>0$ and hence $\wbar\in L^{\infty}(\torus^d)$ by the standard 
embedding theorem (see \cite[Thm.~4.6.1]{triebel:78}). 
In particular, $\Rpen$ is Fr\'echet-differentiable at $\fdagger-t\wbar$ w.r.t.\ $L^{\infty}(\torus^d)$ 
for {\new $t<\overline{t}:=\rho/\|\wbar\|$ with $f^*_t:=\Rpen'[\fdagger-t\wbar]$ given by 
$\langle f^*_t,h\rangle = \langle \ln(\fdagger-t\wbar)-\ln f_0,h\rangle$ (see \eqref{eq:KL_derivatives}). 
Therefore,   assumption \eqref{eq:taylor_for_subdiff} of Proposition \ref{prop_vsc3_proof} is satisfied
with
\[\omega^*=\frac{\wbar}{\fdagger}.
\]}%
Again using \cite[Thm 6.6, case 1b]{johnsen:95} we obtain 
\[
\omega^* \in B^{s-2a}_{p,\infty}(\torus^d). 
\]
We conclude from \eqref{eq:general_kappa} that 
$\kappa_k\leq  C2^{-k(s-2a)}\norm{\omega^*}_{B^{s-2a}_{p,\infty}}$. 
Moreover, 
it follows from \eqref{eq:general_sigma} that both for $p=2$ and $p=\infty$ we have 
\[
\sigma_k \leq \|(\T^*)^{-1}\|_{B^a_{2,2}\to L^2} \norm{P_k}_{B^{s-2a}_{p,\infty}\to B^{a}_{2,2}} 
\norm{\omega^*}_{B^{s-2a}_{p,\infty}}
\leq  C 2^{-k(s-3a)}\norm{\omega^*}_{B^{s-2a}_{p,\infty}}\,.
\]
Now Proposition \ref{prop_vsc3_proof} and the choice $2^{-k}\sim \tau^{1/(2s-2a)}$ 
show that $\fdagger$ satisfies $\VSC^3(\fdagger,\Phi,\KL(\cdot,\f_0),\Sfun_{\rm sq})$ with $\mu=2$ and 
\begin{align*}
\Phi(\tau) &\leq C \inf_{k\in \mathbb{N}}\left[2^{-k(s-3a)}\sqrt{\tau}+  2^{-k(2s-4a)}\right]
\leq C \tau^{\frac{s-2a}{s-a}}.
\end{align*}
For the last statement we apply Theorem \ref{rate_theorem} with $r= q=\mu=2$ and note 
that $\widetilde{\Phi}=\Phi$, 
$(-\Phi)^*(x)= C(-x)^{(2a-s)/a}$ for $x<0$ and $(-\Phi)^*(x)=\infty$ else. Hence, 
$\KL\paren{\fdagger,\itfal{2}}\leq C(\delta^2/\alpha + \alpha^{s/a}+\beta\alpha^4)$, and the choice 
$\alpha\sim \delta^{\frac{2a}{s +a}}$ leads to \eqref{eq:ME_rate}.
\qed\end{proof}

\begin{rem}
It can be shown in analogy to case 1 that the convergence rate \eqref{eq:ME_rate} also holds 
true for $s\in (0,a)$ if $\fdagger/\f_0\in B^s_{p,\infty}(\torus^d)$. To see this, first note 
in analogy to Proposition \ref{prop_vsc3_proof} that $\VSC^1(\fdagger,\Phi,\Rpen,\Sfun)$
(see \eqref{eq:vsc1}) holds true with $\Phi$ defined 
in \eqref{eq:Phi_inf} if after replacing $\omega^*$ by $\f^*=\ln(\fdagger/\f_0)$ in 
\eqref{eq:proj_for_omegastar} we have $\kappa_k\to 0$ and $\sigma_k$ well-defined and finite 
for all $k\in\mathbb{N}$.

Concerning the optimality of the rate \eqref{eq:ME_rate} we refer e.g.\ to \cite{HW:17}.
The gap $[2a,2a+d/2)$ in the Nikolskii scale $B^s_{2,\infty}(\torus^d)$ in which we are 
not able to derive this rate may be due to technical difficulties. 
In the H\"older-Zygmund scale  $B^s_{\infty,\infty}(\torus^d)$ the only 
gaps are at integer multiples of $a$. In contrast, for quadratic regularization we had gaps only 
at integer multiples of $a$ also in the Nikolskii scale (see Corollary \ref{cor:besov_char}). 
\end{rem}

\section{Numerical results}\label{sec:numerical}

In this section we give some numerical results for the iterated maximum entropy regularization.

\emph{Test problem:} \hspace*{1ex}
We choose $T\colon L^1\left(\torus\right)\to  
L^2(\torus)$ to be the periodic convolution operator $(T\f)(x) := \int_0^1 k(x-y)\f(y)\,dy$ 
with kernel
\[{\new
k(x)= \sum_{j=-\infty}^{\infty} \exp(|x-j|/2) }
= \Big(\sinh\frac{1}{4}\Big)^{-1}
\cosh\frac{2x-2\lfloor x\rfloor-1}{4}, \qquad x\in\R
\]
where $\lfloor x\rfloor:=\max\left\{n\in\Z\colon n\leq x\right\}$. Then integration by parts 
shows that $T=(-\partial_x^2+(1/4)I)^{-1}$, and hence $T$ satisfies the assumptions of Theorem \ref{thm:ME_rates}
with $a=2$.  
We choose $\f_0=1$ and the true solution $\fdagger$ such that $\fdagger-1$ is the standard B-spline $B_5$ 
of order $5$ with 
$\supp(B_5)=[0,1]$ and equidistant knots.  Then we have $\fdagger\in B_{2,\infty}^{5.5}(\torus)$, i.e. $s=5.5$.  
(To see this note that piecewise constant functions belong to $B_{2,\infty}^{0.5}(\torus)$ using the defintion 
of this space via the modulus of continuity.) Hence, according to Theorem \ref{thm:ME_rates} a third 
order variational source condition condition $\VSC^3(\fdagger, A\tau^{3/7},\KL(\cdot,1),\Sfun_{\rm{sq}})$ is satisfied 
for some $A>0$.

\emph{Implementation:}\hspace*{1ex}
The operator $\T$ is discretized by sampling $k$ and $\f$ on an equidistant grid with $480$ points.
Then matrix-vector multiplications with $T=T^*$ can be implemented efficiently by FFT. 
The minimizers $\fal$ and $\itfal{2}$ are computed by the Douglas-Rachford algorithm.
To be consistent with our theory, we consider the constraint set 
$\mathcal{C}:=\{\f\in L^1(\mathbb{T})\colon 0\leq \f\leq 5\mbox{ a.e.}\}$.
We checked that for none of the unconstrained minimizers the bound constraints were active 
such that an explicit implementation of these constraints was not required for our test problem. 

To check the predicted convergence rates with respect to the noise level $\delta$ the regularization parameter 
$\alpha$ was chosen by an a-priori rule of the form {\new $\alpha=c\delta^\sigma$} with an optimal exponent $\sigma>0$ 
and a constant $c$ chosen to minimize the constants for the upper bound given in the figures. 
As we bound the worst case errors in our analysis we tried to approximate the worst case noise. 
Let $G_\delta:=\{\gdagger+\delta\sin(2\pi k\cdot):k\in\N\}$. For each value of $\delta$ we found 
$\gobs\in G_\delta$ such that the reconstruction error gets maximal. This in particular yielded larger 
propagated data errors than discrete white noise. 

\begin{figure}
	\centering
  \includegraphics[width=0.7\textwidth]{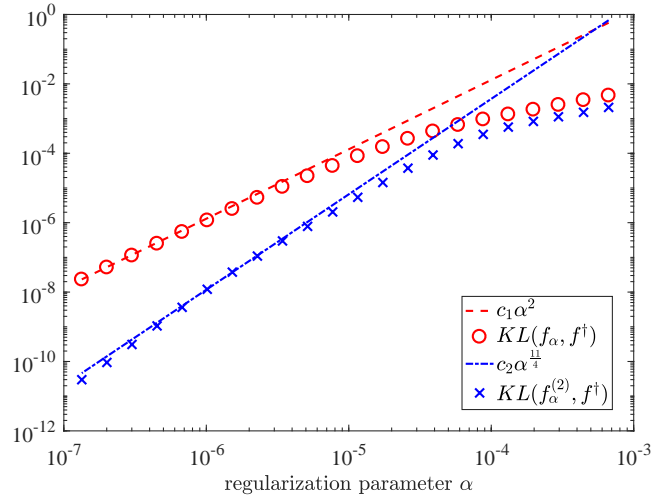} 
  \caption{Predicted and computed approximation error for standard and iterated maximum entropy regularization.}\label{fig:approx}
\end{figure}

\begin{figure}
	\centering
	\includegraphics[width=0.7\textwidth]{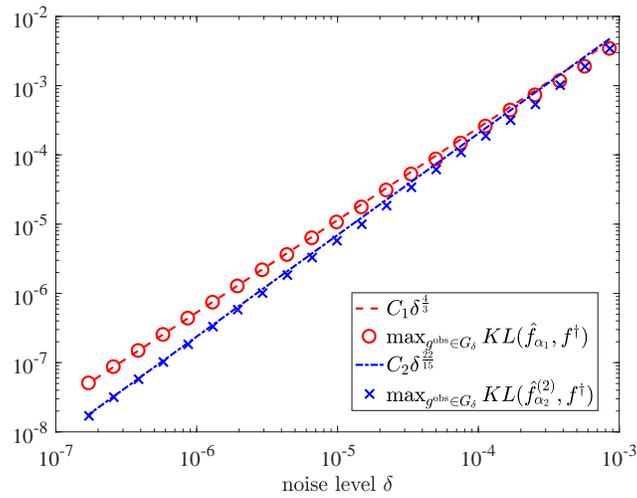}
	\caption{Predicted and computed convergence rates for standard and iterated maximum entropy regularization.}\label{fig:rates}
\end{figure}

\emph{Discussion of the results:} \hspace*{1ex}
Figure \ref{fig:approx} shows the approximation error as a function of $\alpha$, 
i.e.\ $\KL(\f_\alpha,\fdagger)$ where $\f_\alpha$ and  $\f_\alpha^{(2)}$, rsp.,  are the reconstructions for exact data $\gobs=\gdagger$. The two dashed lines indicate the corresponding asymptotic convergence rates predicted by 
our theory, which are in good agreement with the empirical results. 
Note that the saturation effect limits the convergence of the standard
maximum entropy estimator $\f_\alpha=\f_\alpha^{(1)}$  to the maximal rate $\KL(\f_\alpha,\fdagger)=\mathcal{O}(\alpha^{2})$. 
Iterating maximum entropy estimation yields a clear improvement to 
$\KL(\f_{\alpha}^{(2)},\fdagger)=\mathcal{O}(\alpha^{s/a})=\mathcal{O}(\alpha^{11/4})$. 

Figure \ref{fig:rates} displays the convergence rates with respect to the noise level $\delta$ for 
the a-priori choice rule of $\alpha$ described above. Of course, in practice one would rather use some 
a-posteriori stopping rule such as the Lepskii balancing principle, but this is not in the scope 
of this paper. Again, we observe very good agreement of the empirical rate 
$\KL(\fal,\fdagger)=\mathcal{O}(\delta^{4/3})$ with the maximal rate for non-iterated maximum 
entropy regularization, as well as agreement of the rate $\KL(\itfal{2},\fdagger)
= \mathcal{O}(\delta^{2s/(s+a)})=\mathcal{O}(\delta^{22/15})$
of the Bregman iterated estimator $\itfal{2}$ with the rate predicted by Theorem~\ref{thm:ME_rates}.



\section{Discussion and outlook}	
We have shown that variational source conditions can yield convergence rates of arbitrarily high order 
in Hilbert spaces. 
Furthermore we have used this approach to show third order convergence rates in Banach spaces for the first time. 

This naturally leads to the question about arbitrarily high order convergence rates in Banach spaces. 
There are some difficulties that prevented us from going to fourth order convergence rates. 
The approach in Section \ref{sec:third_order} relies on comparison with the convergence rates for the dual variables. As the dual problem to generalized Tikhonov regularization is again some form of generalized Tikhonov regularization, it has finite qualification. Therefore, it does not seem straightforward to get to 
higher orders with this approach. For the approach in Section \ref{sec:Hilbert_spaces} one needs some 
relation between $\breg{\Rpen}(\itfal{2},\fdagger)$ and $\breg{\Rpen}(\fal,\fdagger-\alpha^{q^*-1}\wbar)$, which 
is established in  \eqref{eq:polarization_id} and  \eqref{bound_in_terms_of_itfalk} using the polarization identity. However, this identity only has generalizations in the form of inequalities in Banach spaces.

We hope that the tools provided in this paper will initialize a further development of regularization theory 
in Banach spaces concerning higher order convergence rates. Topics of future research may include 
other regularization methods 
(e.g.\ iterative methods), verifications of higher order variational source conditions for \emph{non-smooth} penalty terms, stochastic noise models, more general data fidelity terms, or nonlinear forward operators. 

\appendix
\section{Duality mappings and an inequality by Xu and Roach}
In this appendix we derive a lower bound on Bregman distances in terms of norm powers 
from more general inequalities by Xu and Roach. 
First recall the following definitions (see e.g.~\cite{Lindenstrauss1979}): 
\begin{defi}\label{defi:smooth_convex}
	The modulus of convexity $\delta_\Y\colon (0,2]\to [0,1]$ of the space $\Y$ is defined by
	\begin{equation*}
		\delta_\Y(\varepsilon):=\inf \{1-\norm{y+\tilde{y}}/2:y,\tilde{y}\in\Y,\norm{y}=\norm{\tilde{y}}=1, \norm{y-\tilde{y}}=\varepsilon\}.
	\end{equation*}
	The modulus of smoothness $\rho_\Y\colon (0,\infty)\to (0,\infty)$ of $\Y$ is defined by
	\begin{equation*}
		\rho_\Y(\tau):= \sup\{(\norm{y+\tilde{y}}+\norm{y-\tilde{y}})/2-1:y,\tilde{y}\in\Y, \norm{y}=1,\norm{\tilde{y}}=\tau\}.
	\end{equation*}
	The space $\Y$ is called \textit{uniformly convex} if $\delta_\Y(\varepsilon)>0$ for every $\varepsilon>0$. It is called \textit{uniformly smooth} 
	if $\lim_{\tau\to 0} \rho_\Y(\tau)/\tau=0.$
	The space $\Y$ is called \textit{$\conv$-convex} (or convex of power type $\conv$) if there exists a constant $K>0$ such that $\delta_\Y(\varepsilon)\ge K\varepsilon^\conv$ for all $\varepsilon>0$. Similarly, it is called $\smoo$-smooth (or smooth of power type $\smoo$) if $\rho_\Y(\tau)\le K\tau^\smoo$ for all $\tau>0$.
\end{defi}

As an example we mention that $L_p$ spaces with $1<p<\infty$ are $\min(p,2)$-smooth and $\max(p,2)$-convex. 
It is known (see \cite{2001Godefroy}) that every Banach space, which is either uniformly smooth or uniformly convex, allows an equivalent norm with respect to which it is $\conv$-convex and $\smoo$-smooth 
with $1<\smoo\le 2\le \conv<\infty$. By \cite[Proposition 1.e.2]{Lindenstrauss1979} we know that 
$\Y^*$ is $\smoo^*$-convex and $\conv^*$-smooth.

Recall that $\Sfun=\frac{1}{q}\norm{\cdot}_\Y^q$ with $q>1$. 
By \cite[Chap.1, Theorem 4.4]{Cioranescu1990}  we have $\partial \Sfun(y)=J_{q,\Y}(y)$, where $J_{q,\Y}$ is the duality mapping given by
\begin{align}\label{eq:defi_duality_map}
	J_{q,\Y}(y)&:=\left\lbrace \omega\in\Y^*: \lsp\omega, y \rsp=\norm{\omega}\norm{y},\norm{\omega}=\norm{y}^{q-1}\right\rbrace.
\end{align}
$J_{q,\Y}$ is $(q-1)$-homogeneous, i.e.\ for all $\lambda\in\R$ we have
\begin{equation}
	J_{q,\Y}(\lambda y)=\sgn(\lambda)|\lambda|^{q-1}J_{q,\Y}(y). \label{homogeneity}
\end{equation}
We assume that $\Y$ is $q$-smooth, $J_{q,\Y}$ is single-valued \cite[Chap.1, Corollary 4.5]{Cioranescu1990}, 
and we can drop superscripts in Bregman distances. 

\begin{lem}[Xu-Roach] \label{xu-roach}
Let $\Y$ be an $\conv$-convex 
Banach space and $\Sfun=\frac{1}{q}\norm{\cdot}_\Y^q$ for some $q>1$. 
Then there exist a constant $\cXR>0$ 
depending only on $q$ and the space $\Y$ such that for all $x,y\in\Y$ we have
	 \begin{align*}
	 	\breg{\Sfun}(x,y) \ge
	 	\begin{cases}
	 		\cXR\max\{\|y\|,\|x-y\|\}^{q-r}\|x-y\|^r &\text{if } q \le r, \\
	 		\cXR\|y\|^{q-r}\|x-y\|^r &\text{if } q \ge r.
	 	\end{cases}
	 \end{align*}
\end{lem}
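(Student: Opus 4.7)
The plan is to deduce both bounds from the classical form of the Xu--Roach inequality, which asserts that in any Banach space $\Y$ and for any $q>1$,
\begin{align*}
\breg{\Sfun}(x,y) \;\ge\; \widetilde{C}_q \int_0^1 \frac{(\|y\|\vee\|y_t\|)^q}{t}\,
\delta_{\Y}\!\paren{\frac{t\|x-y\|}{C(\|y\|\vee\|y_t\|)}}\,dt,
\end{align*}
where $y_t:=y+t(x-y)$ and $\widetilde{C}_q,C>0$ depend only on $q$. I would invoke this as the one nontrivial input (Theorem~1 in Xu--Roach 1991); its proof goes by writing $\Sfun(x)-\Sfun(y)-\lsp J_{q,\Y}(y),x-y\rsp$ as $\int_0^1\!\int_0^t \frac{d}{ds}\lsp J_{q,\Y}(y_s),x-y\rsp\,ds\,dt$ and bounding the inner derivative below via the classical identity $\|u\|+\|v\|\ge 2\|(u+v)/2\|+2(\|u\|\vee\|v\|)\delta_{\Y}(\|u-v\|/(\|u\|\vee\|v\|))$ for pairs of norm-one type elements.

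Next I would feed in the $r$-convexity assumption $\delta_{\Y}(\varepsilon)\ge K\varepsilon^r$ from Definition~\ref{defi:smooth_convex}. Substituting gives
\begin{align*}
\breg{\Sfun}(x,y) \;\ge\; K'_q \,\|x-y\|^r \int_0^1 t^{r-1}\,(\|y\|\vee\|y_t\|)^{q-r}\,dt
\end{align*}
for a constant $K'_q=\widetilde{C}_q K C^{-r}$ depending only on $q$ and $\Y$. All that remains is to estimate the integral, and here the sign of $q-r$ dictates the case split.

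For the case $q\ge r$, the exponent $q-r$ is non-negative, so the trivial bound $\|y\|\vee\|y_t\|\ge\|y\|$ gives $(\|y\|\vee\|y_t\|)^{q-r}\ge\|y\|^{q-r}$; combining with $\int_0^1 t^{r-1}\,dt=1/r$ produces the claimed $\cXR\|y\|^{q-r}\|x-y\|^r$. For the case $q\le r$, the exponent $q-r$ is non-positive, so I need an upper bound on $\|y\|\vee\|y_t\|$. The triangle inequality yields $\|y_t\|\le(1-t)\|y\|+t\|x\|\le\|y\|+\|x-y\|\le 2\max\{\|y\|,\|x-y\|\}$ for $t\in[0,1]$, and the same bound trivially controls $\|y\|$ itself. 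Hence $(\|y\|\vee\|y_t\|)^{q-r}\ge 2^{q-r}\max\{\|y\|,\|x-y\|\}^{q-r}$, and after integration we obtain the desired $\cXR\max\{\|y\|,\|x-y\|\}^{q-r}\|x-y\|^r$.

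The only delicate point is the invocation of the Xu--Roach inequality itself: tracking constants so that $\cXR$ depends solely on $q$ and the $r$-convexity modulus of $\Y$ is more bookkeeping than mathematics. If I wanted to avoid citing the inequality, I would have to reprove it here, which is the main obstacle, but for the purposes of this appendix I would simply reference the original paper and perform the elementary case analysis above.
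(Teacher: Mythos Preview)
Your proposal is correct and follows essentially the same route as the paper: both invoke \cite[Theorem~1]{Xu1991} to obtain the integral lower bound $\breg{\Sfun}(x,y)\ge C\int_0^1 t^{r-1}(\|y\|\vee\|y_t\|)^{q-r}\|x-y\|^r\,dt$ (you write out the intermediate step of substituting $\delta_\Y(\varepsilon)\ge K\varepsilon^r$ into the modulus-of-convexity form, whereas the paper quotes the already-specialized version directly), and both then perform the identical case split on the sign of $q-r$ using $\|y\|\vee\|y_t\|\ge\|y\|$ when $q\ge r$ and $\|y\|\vee\|y_t\|\le 2\max\{\|y\|,\|x-y\|\}$ when $q\le r$.
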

\begin{proof}
Let $q\le \conv$.	By \cite[Theorem 1]{Xu1991} there exists a constant 
	$C$ depending only on $q$ and $\Y$ such that
	\begin{align*}
		\breg{\Sfun}(x,y)\ge C\int_0^1\frac{t^{r-1}}{2^r}\max\{\|y\|,\|y+t(x-y)\|\}^{q-r}\|x-y\|^r\diff t.
	\end{align*}
	As $\max\{\|y\|,{\new \|y+t(x-y)\|}\}\leq 2\max\{\|y\|,\|x-y\|\}$ for all $t\in [0,1]$ and  
	$q-r\le 0$, we conclude
	\begin{align*}
		\breg{\Sfun}(x,y)\ge 2^{q-r}C\max\{\|y\|,\|x-y\|\}^{q-r}\|x-y\|^r \int_0^1\frac{t^{r-1}}{2^r}\diff t.
	\end{align*}
	This shows the lower bound with $\cXR:=2^{q-r}C\int_0^1\frac{t^{r-1}}{2^r}\diff t>0$ 
	in this case. 
If $r<q$ we have $\max\{\|y\|,\|y+t(x-y)\|\}^{q-r}\ge\|y\|^{q-r}$, 
so that the claim follows as above.
\qed\end{proof}


\begin{acknowledgements}
{\new We would like to thank two anonymous referees for their comments, which helped to improve the paper. }
	Financial support by Deutsche Forschungsgemeinschaft through grant CRC 755, project C09, and RTG 2088 is gratefully acknowledged. 
\end{acknowledgements}

\bibliography{database}   

\begin{thebibliography}{10}

\bibitem{Albani2016}
V.~Albani, P.~Elbau, M.~V. de~Hoop, and O.~Scherzer.
\newblock Optimal convergence rates results for linear inverse problems in
  hilbert spaces.
\newblock {\em Numerical Functional Analysis and Optimization}, 37(5):521--540,
  feb 2016.

\bibitem{Andreev2015}
Roman Andreev, Peter Elbau, Maarten~V. de~Hoop, Lingyun Qiu, and Otmar
  Scherzer.
\newblock Generalized convergence rates results for linear inverse problems in
  hilbert spaces.
\newblock {\em Numerical Functional Analysis and Optimization}, 36(5):549--566,
  mar 2015.

\bibitem{BL:91}
J.~M. Borwein and A.~S. Lewis.
\newblock Convergence of best entropy estimates.
\newblock {\em SIAM J. Optim.}, 1(2):191--205, 1991.

\bibitem{BRH:07}
M.~Burger, E.~Resmerita, and L.~He.
\newblock Error estimation for {B}regman iterations and inverse scale space
  methods in image restoration.
\newblock {\em Computing}, 81(2-3):109--135, 2007.

\bibitem{BO:04}
Martin Burger and Stanley Osher.
\newblock Convergence rates of convex variational regularization.
\newblock {\em Inverse Problems}, 20(5):1411--1421, 2004.

\bibitem{CZ:92}
Y.~Censor and S.~A. Zenios.
\newblock Proximal minimization algorithm with {$D$}-functions.
\newblock {\em J. Optim. Theory Appl.}, 73(3):451--464, 1992.

\bibitem{CT:93}
Gong Chen and Marc Teboulle.
\newblock Convergence analysis of a proximal-like minimization algorithm using
  {B}regman functions.
\newblock {\em SIAM J. Optim.}, 3(3):538--543, 1993.

\bibitem{Cioranescu1990}
I.~Cioranescu.
\newblock {\em Geometry of Banach Spaces, Duality Mappings and Nonlinear
  Problems}, volume~62 of {\em Mathematics and its Applications}.
\newblock Kluwer Academic Publishers, 1990.

\bibitem{dVP:88}
Ronald~A. DeVore and Vasil~A. Popov.
\newblock Interpolation of {B}esov spaces.
\newblock {\em Trans. Amer. Math. Soc.}, 305(1):397--414, 1988.

\bibitem{eckstein:93}
Jonathan Eckstein.
\newblock Nonlinear proximal point algorithms using {B}regman functions, with
  applications to convex programming.
\newblock {\em Math. Oper. Res.}, 18(1):202--226, 1993.

\bibitem{eggermont:93}
P~P~B Eggermont.
\newblock Maximum entropy regularization for {F}redholm integral equations of
  the first kind.
\newblock {\em SIAM J. Math. Anal.}, 24:1557--1576, 1993.

\bibitem{ET:76}
Ivar Ekeland and Roger Temam.
\newblock {\em Convex Analysis and Variational Problems}.
\newblock North-Holland Publishing Company, Amsterdam, Oxford, 1976.

\bibitem{EL:93}
H~W Engl and G~Landl.
\newblock Convergence rates for maximum entropy regularization.
\newblock {\em SIAM J. Numer. Anal.}, 30:1509--1536, 1993.

\bibitem{EHN:96}
Heinz~W. Engl, Martin Hanke, and Andreas Neubauer.
\newblock {\em Regularization of inverse problems}, volume 375 of {\em
  Mathematics and its Applications}.
\newblock Kluwer Academic Publishers Group, Dordrecht, 1996.

\bibitem{flemming:12b}
Jens Flemming.
\newblock {\em Generalized {T}ikhonov regularization and modern convergence
  rate theory in {B}anach spaces}.
\newblock Shaker Verlag, Aachen, 2012.

\bibitem{FG:12}
K.~Frick and M.~Grasmair.
\newblock Regularization of linear ill-posed problems by the augmented
  {L}agrangian method and variational inequalities.
\newblock {\em Inverse Problems}, 28(10):104005, 16, 2012.

\bibitem{Frick2011}
Klaus Frick, Dirk~A. Lorenz, and Elena Resmerita.
\newblock Morozov's principle for the augmented lagrangian method applied to
  linear inverse problems.
\newblock {\em Multiscale Modeling {\&} Simulation}, 9(4):1528--1548, oct 2011.

\bibitem{FS:10}
Klaus Frick and Otmar Scherzer.
\newblock Regularization of ill-posed linear equations by the non-stationary
  augmented {L}agrangian method.
\newblock {\em J. Integral Equations Appl.}, 22(2):217--257, 2010.

\bibitem{2001Godefroy}
G.~Godefroy.
\newblock Renormings in {B}anach spaces.
\newblock In J.~Lindenstrauss W.B.~Jonson, editor, {\em Handbook of the
  Geometry of Banach Spaces}, volume~1, page pp. 781–835. Elsevier Science,
  Amsterdam, 2001.

\bibitem{grasmair:10}
Markus Grasmair.
\newblock Generalized {B}regman distances and convergence rates for non-convex
  regularization methods.
\newblock {\em Inverse Problems}, 26:115014 (16pp), 2010.

\bibitem{Grasmair2013}
Markus Grasmair.
\newblock Variational inequalities and higher order convergence rates for
  {T}ikhonov regularisation on {B}anach spaces.
\newblock {\em Journal of Inverse and Ill-Posed Problems}, 21(3):379--394, Jan
  2013.

\bibitem{groetsch:84}
C~W Groetsch.
\newblock {\em The Theory of {T}ikhonov regularization for {F}redholm equations
  of the first kind}.
\newblock Pitman, Boston, 1984.

\bibitem{HKPS:07}
B.~Hofmann, B.~Kaltenbacher, C.~P{\"o}schl, and O.~Scherzer.
\newblock A convergence rates result for {T}ikhonov regularization in {B}anach
  spaces with non-smooth operators.
\newblock {\em Inverse Problems}, 23(3):987--1010, 2007.

\bibitem{HY:10}
B.~Hofmann and M.~Yamamoto.
\newblock On the interplay of source conditions and variational inequalities
  for nonlinear ill-posed problems.
\newblock {\em Applicable Analysis}, 89(11):1705--1727, 2010.

\bibitem{HW:15}
Thorsten Hohage and Frederic Weidling.
\newblock Verification of a variational source condition for acoustic inverse
  medium scattering problems.
\newblock {\em Inverse Problems}, 31(7):075006, 14, 2015.

\bibitem{HW:17}
Thorsten Hohage and Frederic Weidling.
\newblock Characterizations of variational source conditions, converse results,
  and maxisets of spectral regularization methods.
\newblock {\em SIAM J. Numer. Anal.}, 55(2):598--620, 2017.

\bibitem{HW:14}
Thorsten Hohage and Frank Werner.
\newblock Convergence rates for inverse problems with impulsive noise.
\newblock {\em SIAM J. Numer. Anal.}, 52(3):1203--1221, 2014.

\bibitem{HW:16}
Thorsten Hohage and Frank Werner.
\newblock Inverse problems with {P}oisson data: statistical regularization
  theory, applications and algorithms.
\newblock {\em Inverse Problems}, 32:093001, 56, 2016.

\bibitem{johnsen:95}
Jon Johnsen.
\newblock Pointwise multiplication of {B}esov and {T}riebel-{L}izorkin spaces.
\newblock {\em Math. Nach.}, 175:85--133, 1995.

\bibitem{KH:10}
Barbara Kaltenbacher and Bernd Hofmann.
\newblock Convergence rates for the iteratively regularized {G}auss-{N}ewton
  method in {B}anach spaces.
\newblock {\em Inverse Problems}, 26(3):035007, 21, 2010.

\bibitem{kateb:00}
Djalil Kateb.
\newblock Fonctions qui op\`erent sur les espaces de {B}esov.
\newblock {\em Proc. Amer. Math. Soc.}, 128(3):735--743, 2000.

\bibitem{Lindenstrauss1979}
J.~Lindenstrauss and L.~Tzafriri.
\newblock {\em Classical Banach Spaces II: Function Spaces (Ergebnisse Der
  Mathematik Und Ihrer Granzgebiete, Vol 97)}.
\newblock Springer-Verlag, 1979.

\bibitem{Neubauer1997}
Andreas Neubauer.
\newblock On converse and saturation results for {T}ikhonov regularization of
  linear ill-posed problems.
\newblock {\em SIAM Journal on Numerical Analysis}, 34(2):517–527, Apr 1997.

\bibitem{Neubauer2010}
Andreas Neubauer, Torsten Hein, Bernd Hofmann, Stefan Kindermann, and Ulrich
  Tautenhahn.
\newblock Improved and extended results for enhanced convergence rates of
  {T}ikhonov regularization in {B}anach spaces.
\newblock {\em Applicable Analysis}, 89(11):1729–1743, Nov 2010.

\bibitem{Osher2005}
Stanley Osher, Martin Burger, Donald Goldfarb, Jinjun Xu, and Wotao Yin.
\newblock An iterative regularization method for total variation-based image
  restoration.
\newblock {\em Multiscale Modeling \& Simulation}, 4(2):460–489, Jan 2005.

\bibitem{resmerita:05}
Elena Resmerita.
\newblock Regularization of ill-posed problems in {B}anach spaces: convergence
  rates.
\newblock {\em Inverse Problems}, 21(4):1303--1314, 2005.

\bibitem{RS:06}
Elena Resmerita and Otmar Scherzer.
\newblock Error estimates for non-quadratic regularization and the relation to
  enhancement.
\newblock {\em Inverse Problems}, 22(3):801--814, 2006.

\bibitem{Scherzer2008}
Otmar Scherzer, Markus Grasmair, Harald Grossauer, Markus Haltmeier, and Frank
  Lenzen.
\newblock {\em Variational Methods in Imaging: 167 (Applied Mathematical
  Sciences)}.
\newblock Springer New York, 2008.

\bibitem{skilling:89}
J~Skilling, editor.
\newblock {\em Maximum entropy and {B}ayesian methods}.
\newblock Kluwer, Dordrecht, 1989.

\bibitem{triebel:78}
Hans Triebel.
\newblock {\em Interpolation theory, function spaces, differential operators},
  volume~18 of {\em North-Holland Mathematical Library}.
\newblock North-Holland Publishing Co., Amsterdam-New York, 1978.

\bibitem{triebel:92}
Hans Triebel.
\newblock {\em Theory of Function Spaces {II}}.
\newblock Birkh{\"a}user, Basel, 1992.

\bibitem{Xu1991}
Zong-Ben Xu and G.F Roach.
\newblock Characteristic inequalities of uniformly convex and uniformly smooth
  banach spaces.
\newblock {\em Journal of Mathematical Analysis and Applications},
  157(1):189–210, May 1991.

\end{thebibliography}

\end{document}